\documentclass[11pt]{article}

\usepackage[left=32mm,right=32mm,top=30mm,bottom=32mm]{geometry}
\usepackage{labelfig}
\usepackage{epsfig}
\usepackage{epstopdf}

\usepackage{caption}

\captionsetup{margin=20pt,font=small,labelfont=bf}

\usepackage{xfrac}

\usepackage[percent]{overpic}

\usepackage{color}
\usepackage{amsthm,amsmath,amssymb}
\usepackage{booktabs}
\usepackage{mathpazo}
\usepackage{microtype}
\usepackage{overpic}
\usepackage{bm}
\usepackage{sectsty}
\usepackage[
	pdftitle={PDFTitle},
	pdfauthor={Hugo Parlier},
	ocgcolorlinks,
	linkcolor=linkred,
	citecolor=linkred,
	urlcolor=linkblue]
{hyperref}

\usepackage{mathtools}

\definecolor{linkred}{RGB}{199,21,133}
\definecolor{linkblue}{RGB}{16, 78, 139}

\usepackage[hang,flushmargin]{footmisc}
\usepackage{enumitem}
\usepackage{titlesec}
	\titlespacing{\section}{0pt}{12pt}{0pt}
	\titlespacing{\subsection}{0pt}{6pt}{0pt}
	
\titlelabel{\thetitle.\quad}

\makeatletter 

\long\def\@footnotetext#1{%
\H@@footnotetext{%
\ifHy@nesting 
\hyper@@anchor{\@currentHref}{#1}%
\else 
\Hy@raisedlink{\hyper@@anchor{\@currentHref}{\relax}}#1%
\fi 
}}

\def\@footnotemark{%
\leavevmode 
\ifhmode\edef\@x@sf{\the\spacefactor}\nobreak\fi 
\H@refstepcounter{Hfootnote}%
\hyper@makecurrent{Hfootnote}%
\hyper@linkstart{link}{\@currentHref}%
\@makefnmark 
\hyper@linkend 
\ifhmode\spacefactor\@x@sf\fi 
\relax 
}%

\ifFN@multiplefootnote%
\renewcommand*\@footnotemark{%
\leavevmode 
\ifhmode 
\edef\@x@sf{\the\spacefactor}%
\FN@mf@check 
\nobreak 
\fi 
\H@refstepcounter{Hfootnote}%
\hyper@makecurrent{Hfootnote}%
\hyper@linkstart{link}{\@currentHref}%
\@makefnmark 
\hyper@linkend 
\ifFN@pp@towrite 
\FN@pp@writetemp 
\FN@pp@towritefalse 
\fi 
\FN@mf@prepare 
\ifhmode\spacefactor\@x@sf\fi 
\relax%
}%
\fi 

\makeatother 

\newtheorem{thm}{Theorem}[section]

\newtheorem{coro}[thm]{Corollary}
\newtheorem{lem}[thm]{Lemma}
\newtheorem{prop}[thm]{Proposition}
\newtheorem{conj}[thm]{Conjecture}

\theoremstyle{definition}

\theoremstyle{remark}

\newtheorem{remk}[thm]{Remark}


\renewcommand{\phi}{\varphi}

\newcommand{\sys}{{\rm sys}}

\newcommand{\tgamma}{\widetilde{\gamma}}

\newcommand{\talpha}{\widetilde{\alpha}}

\newcommand{\tP}{\widetilde{P}}

\newcommand{\tomega}{\widetilde{\omega}}
\newcommand{\tN}{\widetilde{N}}

\newcommand{\tX}{\widetilde{X}}
\newcommand{\tx}{\widetilde{x}}

\newcommand{\tz}{\widetilde{z}}

\newcommand{\ty}{\widetilde{y}}

\newcommand{\oD}{\overline{D}}

\newcommand{\oB}{\overline{B}}

\newcommand{\dist}{{\rm dist}}

\newcommand{\area}{{\rm area}}

\newcommand{\oq}{\overline{q}}

\newcommand{\oc}{\overline{c}}

\newcommand{\ov}{\overline{v}}

\newcommand{\Hcal}{ {\mathcal H}}

\newcommand{\Vcal}{ {\mathcal V}}

\newcommand{\oK}{\overline{K}}
\newcommand{\op}{\overline{p}}
\newcommand{\oalpha}{\overline{\alpha}}
\newcommand{\obeta}{\overline{\beta}}

\newcommand{\odelta}{\overline{\delta}}

\newcommand{\Cbb}{{\mathbb C}}

\newcommand{\Rbb}{{\mathbb R}}
\newcommand{\Zbb}{{\mathbb Z}}

\newcommand{\dev}{{\rm dev}}

\newcommand{\be}{ \begin{equation} }
\newcommand{\ee}{ \end{equation} }

\newcommand{\hol}{{\rm hol}}

\linespread{1.25}

\sectionfont{\large \bfseries}
\subsectionfont{\normalsize}

\setlength{\parindent}{0pt}
\setlength{\parskip}{6pt}


\long\def\symbolfootnote[#1]#2{\begingroup%
\def\thefootnote{\fnsymbol{footnote}}\footnote[#1]{#2}\endgroup}

\def\blfootnote{\xdef\@thefnmark{}\@footnotetext}

\date{\today}

\begin{document}

{\Large \bfseries 
The maximum number of systoles for genus two Riemann surfaces with abelian differentials}

{\large 
Chris Judge\symbolfootnote[2]{
Research partially supported by a Simons collaboration grant.} 
and Hugo Parlier\symbolfootnote[1]{
Research partially supported by Swiss National Science Foundation grant number PP00P2\textunderscore 153024. \vspace{.1cm} \\
{\em 2010 Mathematics Subject Classification:} Primary: 32G15. Secondary: 30F10, 53C22. \\
{\em Key words and phrases:} systoles, translation surfaces, abelian differentials.}
}

\vspace{0.5cm}

{\bf Abstract.} 
In this article, we provide bounds on systoles 
associated to a holomorphic $1$-form $\omega$ on a Riemann surface $X$. 
In particular, we show that if $X$ has genus two, then, up to homotopy, 
there are at most $10$ systolic loops on $(X, \omega)$
and, moreover, that this bound is realized by a unique translation surface 
up to homothety. For general genus $g$ and a holomorphic 1-form
$\omega$ with one zero, we provide the optimal 
upper bound, $6g-3$, on the number of homotopy classes of systoles.
If, in addition, $X$ is hyperelliptic, then we prove that the
optimal upper bound is $6g-5$.
\vspace{1cm}
\section{Introduction}

The {\em systolic length} of a length space $(X,d)$ 
is the infimum of the lengths of non-contractible loops in $X$.
If a non-contractible loop $\gamma$ achieves this infimum, then we will call
$\gamma$ a {\em systole}. The systolic length and systoles have received
a great deal of attention beginning with work of
Loewner who is credited \cite{Pu} with proving that among unit area Riemannian surfaces of genus one, the unit area hexagonal torus has the largest systolic length, $\sqrt{2/\sqrt{3}}$,
and is the unique such surface that achieves this value.

The hexagonal torus has another extremal property: Among all Riemannian
surfaces of genus one, it has the maximum number of distinct homotopy classes of systoles, three. With respect to this property, the hexagonal torus is not the unique
extremal among all genus one Riemannian surfaces, but it is the 
unique---up to homothety---extremal among
quotients of $\Cbb$ by lattices $\Lambda$ equipped with the metric $|dz|^2$.

The form $dz$ on $\Cbb/\Lambda$ is an example of a holomorphic 1-form
on a Riemann surface. More generally, given a holomorphic 1-form $\omega$ on a
Riemann surface $X$, one integrates $|\omega|$ over arcs
to obtain a length metric $d_{\omega}$ on $X$. 
On the complement of the zero set of $\omega$
the metric is locally Euclidean, and each zero of order $n$
is a conical singularity with angle $2 \pi \cdot (n+1)$.

The length space $(X, d_{\omega})$ determined
by $(X, \omega)$ is a basic object of study 
in the burgeoning field of Teichm\"uller dynamics.
See, for example, the recent surveys of \cite{Forni-Matheus-survey}
and \cite{Wright-survey}.

In this paper we prove the following.

\begin{thm} \label{thm:main}
Let $\omega$ be a holomorphic 1-form on a closed Riemann surface $X$ of genus two.
The number of distinct homotopy classes of systolic minimizers on $(X, d_{\omega})$
is at most 10. Moreover, up to homothety, there is a unique metric space of the form
$(X, d_{\omega})$ for which there exist exactly 10 distinct homotopy classes of systoles.
\end{thm}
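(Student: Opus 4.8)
The plan is as follows. Every genus-two Riemann surface is hyperelliptic, so $\omega$ lies either in $\Hcal(2)$ (one double zero) or in $\Hcal(1,1)$ (two simple zeros), and the hyperelliptic involution $\tau$ satisfies $\tau^*\omega=-\omega$; in particular $\tau$ is an orientation-preserving isometry of $(X,d_\omega)$ that permutes the systolic loops. I would first dispatch the stratum $\Hcal(2)$: the general optimal bound of this paper for a one-zero differential on a hyperelliptic surface, $6g-5$, equals $7$ when $g=2$, which is already strictly less than $10$, so that stratum contributes nothing to the extremal problem. The entire content of the theorem therefore concerns $\Hcal(1,1)$.

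In $\Hcal(1,1)$ the divisor of $\omega$ is a canonical divisor, hence a fibre of the hyperelliptic pencil $g^1_2$; since it consists of two distinct points $z_1,z_2$, these are exchanged by $\tau$. Thus $Y:=\Xtau$ is a flat sphere (carrying the metric of the quadratic differential $\omega^2$) with exactly one cone point $\overline z$ of angle $4\pi$, the common image of $z_1$ and $z_2$, and six cone points of angle $\pi$, the images of the Weierstrass points. I would carry out the count on $Y$, which is combinatorially far simpler than $X$. The key translation: take one geodesic representative of each systolic homotopy class of $(X,d_\omega)$ and push it down to $Y$; a $\tau$-invariant systole descends either to a geodesic arc joining two of the six $\pi$-cone points (of half the systolic length) or to a closed geodesic through $\overline z$, while a non-$\tau$-invariant systole and its $\tau$-image descend together to a single closed geodesic missing all Weierstrass points. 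One records the resulting $1$-to-$1$ / $2$-to-$1$ bookkeeping, together with the local constraints forced by minimality: no systole can pass through a $\pi$-cone point (at such a vertex a geodesic has angle $<\pi$ on both sides), so the six $\pi$-points are exactly the endpoints of the descended arcs, whereas a geodesic through $\overline z$ leaves angle at least $\pi$ on each side.

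The engine of the bound is then a counting estimate for the graph $\Gamma\subset Y$ cut out by these arcs and circles --- vertices the cone points lying on $\Gamma$ together with the transverse intersections, edges the arcs between consecutive vertices, faces the complementary regions. Minimality forces every complementary region to be a disk, and Gauss--Bonnet for a flat disk whose cone points lie only on its boundary forces lower bounds on the number of sides of each face; combining this with $V-E+F=2$ and feeding it back through the bookkeeping above caps the number of systolic classes of $X$ at $10$. I expect the main labour --- and the main obstacle --- to be the case analysis behind this single clean conclusion: systoles may run through neither, one, or both of $z_1,z_2$; several may pass through $\overline z$ or share common subarcs; and parallel systolic closed geodesics (one homotopy class, a whole cylinder) together with the interplay between arcs and circles on $Y$ must all be organized so that nothing is over- or undercounted.

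For the sharpness and uniqueness statement I would run the estimate backwards: equality at $10$ forces equality in every inequality used, which rigidifies the combinatorial type of $\Gamma$ on $Y$; the exact angle conditions then fix all edge lengths and gluing data, so $Y$ --- and hence its canonical double cover $(X,d_\omega)$ --- is determined up to a global scaling. One finally identifies this extremal explicitly (I expect it to be the flat surface of the holomorphic $1$-form $x\,dx/y$ on the curve $y^2=x^6-1$, equivalently a surface assembled from congruent equilateral triangles with a large symmetry group) and checks directly that it carries exactly $10$ systolic classes. Establishing this rigidity, and in particular excluding all near-extremal combinatorial configurations that the Euler-characteristic count alone does not kill, is the second delicate point.
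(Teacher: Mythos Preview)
Your framework matches the paper's: dispose of $\Hcal(2)$ via the hyperelliptic bound $6g-5=7$, and for $\Hcal(1,1)$ pass to the quotient sphere $Y=\Xtau$ with one $4\pi$ cone point $c^*$ and six $\pi$ cone points. The paper indeed shows that every nonseparating systole is homotopic to one through two Weierstrass points and hence descends to a ``Weierstrass arc'' on $Y$ joining two $\pi$-points, and that there is at most one separating systole.

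The gap is the counting engine. A global Euler-characteristic/Gauss--Bonnet estimate on the graph $\Gamma\subset Y$ is too coarse to produce the sharp bound $10$: with seven vertices on a sphere the usual $2E\ge 3F$ type argument yields only $E\le 15$ or so, and moreover the edges of $\Gamma$ are not in bijection with systolic classes (indirect arcs overlap along common subarcs at $c^*$). The paper's proof rests on a structural dichotomy your outline does not identify---\emph{direct} Weierstrass arcs (missing $c^*$) versus \emph{indirect} ones (through $c^*$)---together with three ingredients that do not fall out of $V-E+F=2$: (i) a hard local theorem (Theorem~\ref{prop:bivalent-direct}) that at most two direct systolic arcs end at any given $\pi$-cone point, proved by a multi-page case analysis in which one cuts along three hypothetical arcs and develops the resulting annulus into the plane; (ii) a ``prong'' decomposition of indirect arcs at $c^*$, where the $4\pi$ angle budget forces at most four equal-length prongs (hence $\binom{4}{2}=6$ indirect arcs) or at most five prongs with one short (hence at most four indirect arcs); and (iii) a final case split showing that four equal prongs force the number of direct arcs down to at most four (giving $6+4=10$), while a short prong or a separating systole each cap the total at nine. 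Your guess at the extremal is also off: the surface ``assembled from congruent equilateral triangles'' is the $\Hcal(2)$ optimum with seven systoles; the ten-systole surface lies in $\Hcal(1,1)$ and is the specific polygon of Figure~\ref{X10Figure}, recovered by running the four-equal-prong case to equality.
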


In other words, among the unit area surfaces $(X, d_{\omega})$ of genus two,
there exists a unique surface $(X_{10}, d_{\omega_{10}})$ that achieves the maximum 
number of systolic homotopy classes. The surface obtained by multiplying the unit area 
metric $d_{\omega_{10}}$ by $\sqrt{4 \sqrt{3}}$ is described in Figure \ref{X10Figure}. 
The surface $(X_{10}, d_{\omega_{10}})$ has two conical singularities each of angle $4\pi$ 
corresponding to the vertices of the polygon pictured in Figure \ref{X10Figure}. 
In other words, the $1$-form $\omega_{10}$ has simple zeros corresponding to these vertices. 
Four of the ten systolic homotopy classes consist of geodesics that lie in an embedded 
Euclidean cylinders. Each of the other six systolic homotopy classes has a unique geodesic 
representative that necessarily passes through one of the two zeros of $\omega_{10}$. 
It is interesting to note that some
of the latter systoles intersect twice. Both intersections necessarily occur at 
zeros of $\omega_{10}$. Indeed, if two curves intersect twice and one of the 
intersection points is a smooth point of the Riemannian metric, then a standard 
perturbation argument produces a curve of shorter length.

\begin{figure}[h]
\leavevmode \SetLabels
\L(.39*.38) $\sqrt{3}$\\%
\L(.39*.27) $c$\\%
\L(.39*1.02) $e$\\%
\L(.33*.77) $1$\\%
\L(.28*.77) $a$\\%
\L(.33*.50) $1$\\%
\L(.28*.50) $b$\\%
\L(.47*.77) $1$\\%
\L(.71*.17) $a$\\%
\L(.71*.50) $b$\\%
\L(.59*.69) $c$\\%
\L(.59*-.05) $e$\\%
\L(.48*.17) $d$\\%
\L(.51*.77) $d$\\%
\endSetLabels
\begin{center}
\AffixLabels{\centerline{\epsfig{file =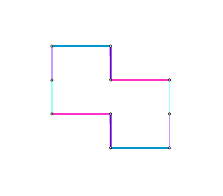,width=6cm,angle=0}}}
\vspace{-24pt}
\end{center}
\caption{A pair $(X, \omega)$ that has ten systoles: By identifying
parallel sides with the same letters, we obtain a Riemann surface $X$. The one form $dz$
in the plane defines a holomorphic 1-form on $X$.}\label{X10Figure}
\end{figure}

Perhaps surprisingly, $(X_{10}, d_{\omega_{10}})$ does not maximize the systolic length among all unit area, genus two surfaces of the form $(X, d_{\omega})$. To discuss this, it will be convenient to introduce the {\em systolic ratio}: the square of the systolic length divided by the area of the surface. A surface maximizes the systolic length among unit area surfaces if and only if it maximizes systolic ratio among all surfaces.

A genus two surface $(X, d_{\omega})$ that has ten systoles has systolic ratio equal to
$1/\sqrt{3} = .57735\ldots$. On the other hand, the surface described in Figure \ref{fig:maxratio} has systolic ratio equal to 
\begin{equation} \label{const:max}
 \frac{2 \cdot \left(\sqrt{13} -3 \right)^2 }{
 \sqrt{3} \cdot (1 -\frac{3}{4}(\sqrt{13}-3)^2)}~ =~ .58404\ldots
\end{equation}
We believe that this surface has maximal systolic ratio.
\begin{conj}
The supremum of the systolic ratio over surfaces $(X, d_{\omega})$ of genus two equals the constant in (\ref{const:max}). Moreover, up to homothety, the surface described in Figure \ref{fig:maxratio} is the unique surface that achieves this systolic ratio.
\end{conj}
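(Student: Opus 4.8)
The systolic ratio $\sys^2/\area$ is invariant under homothety, so the conjecture is the assertion that the function $(X,\omega) \mapsto \sys(X,d_\omega)^2/\area(X,d_\omega)$ attains its supremum on the moduli space of genus two abelian differentials, at a unique point. Normalizing each homothety class by $\sys = 1$, the ratio becomes $1/\area$, so the problem is equivalent to an area-minimization: \emph{among all genus two pairs $(X,\omega)$ with $\sys(X,d_\omega) = 1$, minimize $\area(X,d_\omega)$.} The plan is to carry this out in period coordinates on the two strata $\Hcal(2)$ and $\Hcal(1,1)$ comprising the moduli space, to reduce the interior problem to finitely many explicit finite-dimensional optimizations, and to treat the non-compactness of the strata by a separate degeneration analysis.

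The first and hardest step is to control degenerations, since $\sys^2/\area$ is only upper semicontinuous and the strata are non-compact. Given a sequence whose ratio tends to the supremum, after rescaling to $\sys = 1$ a divergent subsequence pinches one or more curves and the flat limit is a nodal translation surface assembled from lower-complexity pieces, namely one or two flat tori or a flat sphere with cone points. The essential observation is that a vanishing cylinder imposes a \emph{distance} constraint on the limit: any loop crossing the cylinder and closing up through the rest of the surface has length tending to the distance, in the limit piece, between the two points onto which the cylinder collapses, so the bound $\sys \geq 1$ survives as a constraint $\dist(p,q) \geq 1$ between these marked points. Each boundary configuration thus reduces to a flat torus (or sphere) carrying marked points subject \emph{simultaneously} to a systole constraint and to such marked-point distance constraints, and I would show that minimizing area under both families of constraints gives area strictly larger than the conjectured minimum. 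The inequality is delicate: a unit-systole hexagonal torus has covering radius $1/\sqrt3 < 1$, so a torus hosting two points at distance $\geq 1$ must be substantially larger, and the resulting area lower bound keeps the boundary ratio just below the constant in (\ref{const:max}). The difficulty is that one must enumerate which curves can pinch and verify this for every resulting boundary stratum.

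Having confined the supremum to the interior, the second step is rigidity. By Theorem \ref{thm:main} a genus two pair carries only finitely many systolic homotopy classes, and a near-maximizer must carry enough of them to determine its combinatorial type: the systoles, together with the saddle connections they traverse, cut $(X,\omega)$ into finitely many Euclidean polygons, and there are only finitely many combinatorial types of such decompositions. For each fixed type the surface is described by the period vectors of its saddle connections; the area is a bilinear (wedge) pairing of these vectors, of the form $\area = \sum_j \im(\overline{A_j} B_j)$ in a suitable basis, while the constraint $\sys = 1$ becomes a finite system of equalities $|v| = 1$ on the systolic classes together with inequalities $|w| \geq 1$ on the remaining primitive classes. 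This exhibits each type as the minimization of an explicit quadratic form over a convex region of period data.

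The final step is to solve and compare these optimizations. For each type I would locate the minimizer through the Karush--Kuhn--Tucker conditions, verifying that the active constraints at the optimum are precisely the systolic ones; in the winning type the stationarity equations collapse to a single quadratic whose relevant root is $\sqrt{13} - 3$, producing the value in (\ref{const:max}). Uniqueness then rests on two facts: within the winning cell the area is strictly convex as a function of the free period on the constraint set, so its minimizer is unique, and a direct comparison shows that every other combinatorial type, and every boundary configuration, yields strictly larger area. Matching the optimal period data to the polygon of Figure \ref{fig:maxratio} identifies the minimizer and closes the argument. The bulk of the work lies in the combinatorial case analysis of the second and third steps together with the degeneration estimates of the first; by contrast, the concluding algebra producing $\sqrt{13}-3$ is routine.
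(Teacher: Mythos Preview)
The statement you are addressing is a \emph{conjecture}: the paper explicitly says ``We believe that this surface has maximal systolic ratio'' and then states it without proof. There is therefore no argument in the paper to compare your proposal against. What the paper does establish in this direction is the $\Hcal(2)$ case (Theorem~\ref{thm:oneconepoint-genus2}), where the optimal ratio is $2/(3\sqrt{3})\approx 0.3849$, well below the conjectured constant; so the problem is genuinely about $\Hcal(1,1)$.

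Your outline is a reasonable strategy, and you are candid that ``the bulk of the work'' is left undone. But a few of the steps you present as routine are not. First, the assertion that a near-maximizer ``must carry enough [systoles] to determine its combinatorial type'' is not justified and is in tension with the paper itself: the conjectured extremal surface of Figure~\ref{fig:maxratio} is \emph{not} the ten-systole surface of Theorem~\ref{thm:main}, so maximizing the ratio does not force the maximal systole count, and you would need an independent rigidity argument (e.g.\ a first-variation/KKT argument showing that at a local maximum enough length constraints are active to make the period coordinates rigid up to scale). Second, the claim that ``the area is strictly convex as a function of the free period on the constraint set'' is unsupported: in period coordinates the area is the symplectic pairing $\sum \im(\overline{A_j}B_j)$, which is indefinite, and convexity on a constraint slice would have to be checked cell by cell. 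Third, the degeneration step is more delicate than the single torus-with-two-marked-points model you compute: genus-two abelian differentials can degenerate by pinching a separating curve (two tori joined at a node), a non-separating curve (a torus with a self-node), or by the $\Hcal(1,1)\to\Hcal(2)$ collision; the ``distance $\ge 1$'' constraint you describe does not obviously survive in the same form across all of these, and each case would need its own area lower bound exceeding $1/0.58404\ldots$.

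In short: this is an open problem in the paper, your plan is a credible blueprint, but the three load-bearing claims above (rigidity of maximizers, convexity of the reduced problem, and the full degeneration enumeration) are exactly where a proof would have to do real work, and none of them is yet done.
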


\begin{figure}[h]
\leavevmode \SetLabels
\L(.344*.45) $2$\\%
\L(.25*.47) $1$\\%
\L(.213*.518) $a$\\%
\L(.776*.49) $a$\\%
\L(.235*.2) $b$\\%
\L(.753*.78) $b$\\%
\L(.36*-0.02) $c$\\%
\L(.63*1.00) $c$\\%
\L(.36*.62) $e$\\%
\L(.63*.355) $e$\\%
\L(.48*.18) $d$\\%
\L(.51*.81) $d$\\%
\endSetLabels
\begin{center}
\AffixLabels{\centerline{\epsfig{file =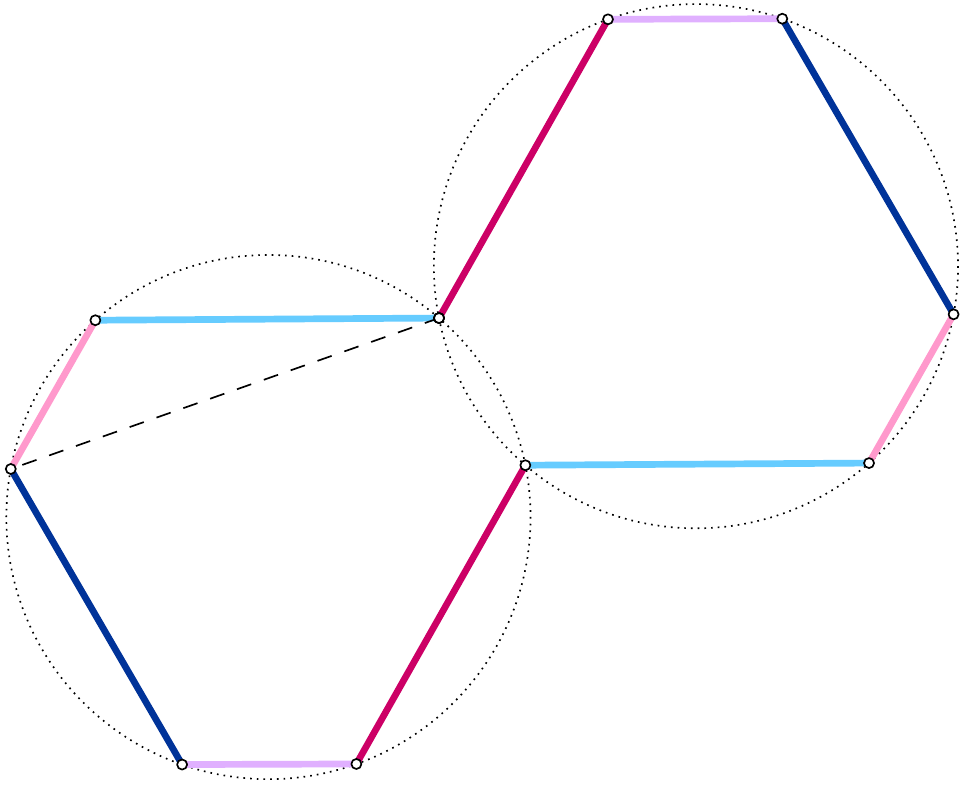,width=9cm,angle=0}}}
\vspace{-30pt}
\end{center}
\caption{A surface $(X,d_{\omega})$
whose systolic ratio equals the constant in (\ref{const:max}). The surface
is obtained from gluing parallel sides of two isometric cyclic hexagons in $\Cbb$.
Each hexagon has a rotational symmetry of order 3. The 1-form $\omega$ corresponds to $dz$ in the plane.}\label{fig:maxratio}
\end{figure}

By the Riemann-Roch theorem, the total number of zeros, including multiplicities,
of a holomorphic 1-form on a Riemann surface of genus $g$ equals $2g-2$.
In particular, a 1-form $\omega$ on a genus two Riemann surface $X$
consists of either two simple zeros or one double zero. Thus, we have a partition
of the moduli space of pairs $(X, \omega)$ into the stratum, $\Hcal(1,1)$, of those 
for which $d_{\omega}$ has two conical singularities of angle $4\pi$ and the 
complementary stratum, $\Hcal(2)$, those for which $d_{\omega}$ has a single conical 
singularity of angle $6 \pi$.

In order to prove Theorem \ref{thm:main}, we study each stratum separately.
It turns out that the stratum $\Hcal(2)$ is considerably easier to analyse. 
Indeed, for $\Hcal(2)$ we are able to prove sharp bounds on both the systolic 
ratio and on the number of systolic homotopy classes. This is due to the fact that
if there is only one zero, then each homotopy class of systoles may be represented
by a single saddle connection. 

\begin{thm} \label{thm:oneconepoint-genus2}
If $(X, \omega) \in \Hcal(2)$, then $(X,d_{\omega})$ has at most 7 homotopy classes of systoles, 
and the systolic ratio of $(X,d_{\omega})$ is at most $2/(3\sqrt{3}) = .3849\ldots$
Furthermore, either inequality is an equality if and only if $(X, d_{\omega})$ is tiled by an
equilateral triangle.\footnote{
A surface $(X, d_{\omega})$ is {\em tiled by an equilateral triangle $T$} 
if there exists a triangulation of $X$ such that each triangle is isometric to $T$ 
and each vertex is a zero of $\omega$.}
\end{thm}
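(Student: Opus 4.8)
The plan is to exploit the fact, noted in the introduction, that in $\Hcal(2)$ every systolic homotopy class is represented by a saddle connection — a geodesic arc running from the single zero $P$ to itself. Let $\ell$ denote the systolic length of $(X,d_\omega)$, normalized so $\ell = 1$. First I would set up the combinatorics of saddle connections of length $1$ emanating from $P$: at the cone point of angle $6\pi$ there are finitely many unit-length systolic saddle connections, and I would count them by angular position. Each such saddle connection leaves $P$ in some direction and returns in some direction; the key point is that two distinct unit saddle connections cannot bound a thin angular sector at $P$ on both ends, or one could shortcut through the interior (the perturbation argument mentioned for the genus two theorem: an interior intersection at a smooth point yields a shorter curve, so all intersections must be at $P$, and even there the angles are constrained).

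The core estimate is a packing argument at the cone point. Consider the $2k$ directions at $P$ (counted with multiplicity $k$ = number of systolic saddle connections, each contributing an outgoing and incoming ray). Between consecutive rays the angular gap, together with the length-$1$ constraint and the triangle inequality, forces a lower bound on the angle subtended: if two unit saddle connections make too small an angle at $P$, the third side of the triangle they span is shorter than $1$, contradicting systolicity — unless that third side is also a loop at $P$, i.e. another saddle connection, in which case we get an equilateral triangle of side $1$. So the total angle $6\pi$ around $P$ must accommodate $2k$ sectors each of angle $\ge \pi/3$, giving $2k \le 18$, hence $k \le 9$; sharpening this by noting the rays come in a constrained pattern (an outgoing ray is followed, around $P$, by structure that cannot be packed as tightly as the naive bound suggests) should bring it down to $k \le 7$. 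For the systolic ratio, I would then observe that the surface is decomposed by these saddle connections into Euclidean triangles; each triangle with all sides systolic (length $1$) and a vertex angle $\ge \pi/3$ has area $\ge \sqrt{3}/4$ only in the equilateral case, and counting triangles via the angle sum $6\pi$ around $P$ and Euler characteristic gives that the area is at least $(3\sqrt{3}/2)\cdot\ell^2$... more precisely one wants $\ell^2/\area \le 2/(3\sqrt 3)$, which the equilateral-triangulation case saturates with exactly the right number of triangles.

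For the equality analysis: if $k = 7$, every angular sector at $P$ is exactly $\pi/3$ and every triangle in the induced decomposition is equilateral of side $\ell$, so $(X,d_\omega)$ is tiled by an equilateral triangle; conversely such a tiling of a genus two surface has a fixed number of triangles ($4 \cdot 2 = 8$ by Euler characteristic, since $\chi = -2$ and each triangle contributes area, with the vertex being the unique cone point of angle $6\pi = $ six triangle-corners) and one checks directly it has $7$ systoles and systolic ratio $2/(3\sqrt3)$. Similarly the systolic-ratio bound is an equality exactly when all triangles are equilateral. The main obstacle I anticipate is the sharpening from the crude packing bound $k \le 9$ down to the optimal $k \le 7$: the naive "angle $\ge \pi/3$" argument wastes information, and one must argue more carefully that outgoing and incoming rays of the \emph{same} saddle connection, and pairs of saddle connections sharing a triangle, cannot all be packed at the minimal angle unless the whole surface is the equilateral tiling — essentially a rigidity statement that forces the extremal configuration globally rather than just locally at $P$.
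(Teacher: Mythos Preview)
Your angular packing argument correctly recovers the crude bound $k \leq 9$; this is equivalent to the paper's observation that systolic loops through the zero are Delaunay edges, of which there are at most $6g-3=9$. But the reduction from $9$ to $7$ is the entire content of the theorem, and you have no mechanism for it beyond the hope that ``constrained patterns'' will sharpen things. They will not: a purely local angular argument at the cone point cannot distinguish $9$ from $7$, because locally nothing prevents $18$ rays at angle $\pi/3$. What goes wrong globally is that some pairs of systolic Delaunay edges are forced to be \emph{homotopic} (they bound a flat cylinder). The paper detects this using the hyperelliptic involution $\tau$: since genus two is hyperelliptic, $\tau$ fixes the zero and acts on the Delaunay triangulation, and a count of the $2g+1=5$ Weierstrass points sitting on edge-midpoints forces at least four triangles to have two $\tau$-fixed edges; each such pair $T \cup \tau(T)$ is a cylinder, yielding two pairs of homotopic systolic edges and hence at most $9-2=7$ classes. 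This is a global, involution-based argument that your local packing picture cannot see.

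Your area argument also has a gap. You assert that a triangle with all sides of length $\geq 1$ has area $\geq \sqrt{3}/4$, but this is false: a nearly degenerate triangle with sides $1,1,2-\epsilon$ has area close to zero. The paper avoids this by working with the \emph{Delaunay} decomposition (whose $2$-cells are inscribed in empty disks) and invoking Fejes~T\'oth's packing lemma, which controls not the raw area but the density of the vertex-disks inside each cell; summing over the (at most $4g-2=6$) Delaunay $2$-cells gives $\area(X) \geq 6 \cdot \sqrt{3}\,(\sys/2)^2$, i.e.\ the bound $\sys^2/\area \leq 2/(3\sqrt{3})$. Incidentally, the equilateral tiling has $6$ triangles, not $8$: with $v=1$, $2E=3F$, and $1-E+F=-2$ one gets $F=6$.
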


The unique surface that attains both optimal bounds is illustrated 
in Figure \ref{fig:genus2equilateral}.

\begin{figure}[h]
\leavevmode \SetLabels
\L(.29*.51) $a$\\%
\L(.71*.51) $a$\\%
\L(.335*-0.12) $b$\\%
\L(.65*1.00) $b$\\%
\L(.46*-0.12) $c$\\%
\L(.52*1.00) $c$\\%
\L(.584*-.12) $d$\\%
\L(.39*1.00) $d$\\%
\endSetLabels
\begin{center}
\AffixLabels{\centerline{\epsfig{file =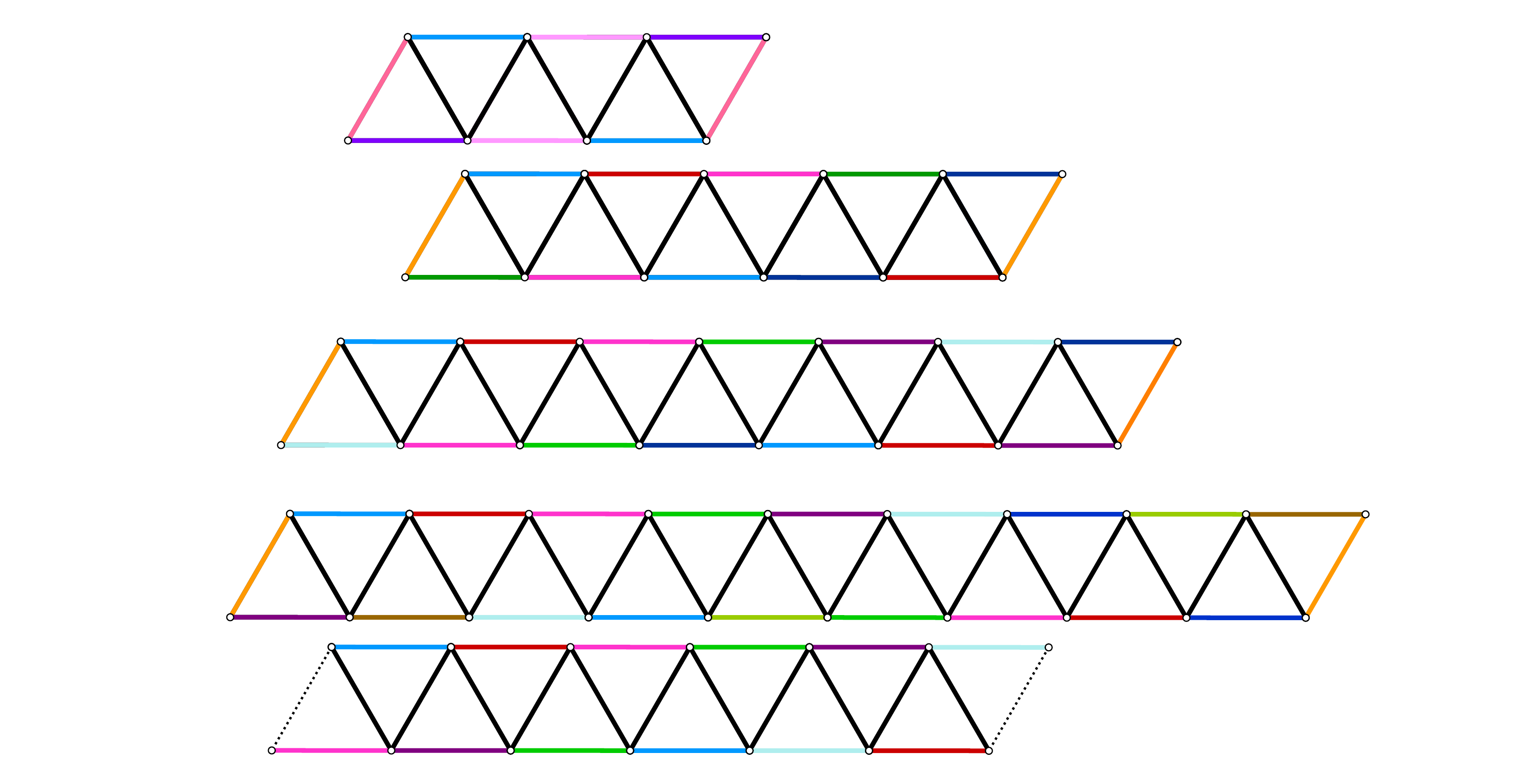,width=7cm,angle=0}}}
\caption{The genus two surface $(X, d_{\omega})$ that achieves the optimal bounds in $\Hcal(2)$. 
\label{fig:genus2equilateral} }
\vspace{-30pt}
\end{center}

\end{figure}

To prove the optimal systolic bounds for a holomorphic 1-form with one zero, 
we adapt the argument that Fejes T\'oth used to prove that a hexagonal lattice 
gives the optimal disc packing of the plane \cite{Toth}. This method of proof
extends to higher genus surfaces equipped with holomorphic one forms that have exactly 
one zero. 
\begin{thm} \label{thm:oneconepoint}
If $(X, \omega) \in \Hcal(2g-2)$, then the systolic ratio of $(X,d_{\omega})$ 
is at most $\frac{4}{(4g-2)\sqrt{3}}$. Equality is achieved if and only 
if the surface is tiled by equilateral triangles whose vertices lie at the zero of $\omega$.
\end{thm}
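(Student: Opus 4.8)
The plan is to adapt Fejes Tóth's argument for the optimal disc packing of the plane to the present setting. Let $(X, \omega) \in \Hcal(2g-2)$ and write $\ell$ for the systolic length and $z$ for the unique zero of $\omega$, which is a conical singularity of angle $2\pi(2g-1)$. The first step is a structural observation: since $\omega$ has a single zero, every systolic homotopy class is represented by a saddle connection based at $z$ (a geodesic loop cannot avoid $z$ while being systolic, since otherwise it would live in a flat cylinder and could be shrunk or split, a point already used in the paper for $\Hcal(2)$). So all systoles emanate from the one cone point. Consider the open metric disc $D = D(z, \ell/2)$ of radius $\ell/2$ centered at $z$. Because no non-contractible loop has length less than $\ell$, the exponential map from a neighborhood of $z$ in the tangent cone is injective on the open ball of radius $\ell/2$ away from $z$ (any failure of injectivity produces a geodesic loop, hence a non-contractible loop, of length $<\ell$), so $D$ is isometric to a Euclidean cone sector of total angle $2\pi(2g-1)$ and radius $\ell/2$, minus its center. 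Its area is therefore $(2g-1)\cdot \pi (\ell/2)^2 \cdot \tfrac{1}{1}$, i.e. $\tfrac{(2g-1)\pi \ell^2}{4}$... but this overcounts, so the right way is to bound things from the complementary side.

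The key step is the Fejes Tóth packing estimate. Every point $x \in X$ lies within distance $\ell/2$... no: instead, cover $X$ by the images of geodesics of length $\le \ell/2$ issuing from $z$, and triangulate. Concretely, I would take the cut locus of $z$; since $(X,d_\omega)$ is a closed flat surface with one cone point, $X \setminus (\text{cut locus})$ develops into the tangent cone at $z$, and the cut locus is a finite graph whose edges are geodesic and whose vertices include the midpoints of the shortest saddle connections. This realizes $X$ as a polygon $P$ (the development of the cut-locus complement) with total angle $2\pi(2g-1)$ at $z$ and with all boundary points at distance $\ge \ell/2$ from the apex. Now apply the planar isoperimetric-type inequality of Fejes Tóth: among all such sectors-with-boundary of fixed total cone angle $\Theta = 2\pi(2g-1)$ subtending the apex, the ratio $\area(P)/\ell^2$ is minimized when $P$ is tiled by equilateral triangles of side $\ell$ — equivalently, the "cells" of angle $\pi/3$ each have area at least $\tfrac{\sqrt{3}}{4}\ell^2 \cdot \tfrac{?}{}$. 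More precisely, partition the total angle $\Theta$ at $z$ into the $2\cdot\#\{\text{triangles}\}$ angular pieces; Fejes Tóth's lemma says each angular sector of angle $\theta$ bounded by two segments of length $\ge \ell/2$ and by a path staying at distance $\ge \ell/2$ from the apex has area at least that of the corresponding sector of the equilateral-triangle tiling, with equality iff it is such a sector. Summing over all sectors gives
\[
\area(X) \;\ge\; \frac{\Theta}{2\pi/6}\cdot \frac{\sqrt{3}}{4}\,\ell^2 \cdot \frac{1}{?} \;=\; (4g-2)\cdot \frac{\sqrt{3}}{4}\,\ell^2,
\]
so $\ell^2/\area(X) \le \tfrac{4}{(4g-2)\sqrt{3}}$, which is the claimed bound.

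For the equality case, one unwinds when Fejes Tóth's lemma is sharp in every sector: this forces every angular cell at $z$ to be exactly a $\pi/3$-sector of a Euclidean equilateral triangle of side $\ell$, all boundary vertices of the development to be at distance exactly $\ell/2$ or $\ell/\sqrt{3}$ in the pattern dictated by the triangular lattice, and the identifications to glue these triangles edge-to-edge. This exhibits $X$ as tiled by equilateral triangles of side $\ell$ with all vertices at the zero of $\omega$; conversely such a tiling obviously has area $(4g-2)\tfrac{\sqrt3}{4}\ell^2$ and systolic length $\ell$ (one must check no shorter loop exists, which follows since any essential loop crosses at least one edge-to-edge... rather, since the triangulation has a single vertex, a standard flat-surface argument shows the shortest essential loop has length equal to the edge length). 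The main obstacle is making the sector-decomposition rigorous: one must verify that the cut locus of the single cone point is a finite embedded geodesic graph and that the development of its complement is genuinely a simply connected flat polygon with the apex as its only cone point, so that Fejes Tóth's planar lemma applies verbatim; handling cone angle $> 2\pi$ (so the "sector" wraps around many times) requires care but causes no essential difficulty since the lemma is local in the angular variable.
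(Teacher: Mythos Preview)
Your high-level plan---adapt Fejes T\'oth---matches the paper, but the implementation via the cut locus has a genuine gap. The lemma you need is false as stated: an angular sector at the apex of the cone, bounded by two rays and by a path staying at distance $\ge \ell/2$ from the apex, does \emph{not} have area bounded below by the corresponding equilateral-triangle sector. The minimizer is the circular sector, which gives only
\[
\area(X)~\ge~\frac{\Theta}{2}\cdot\Big(\frac{\ell}{2}\Big)^{2}~=~\frac{\pi(2g-1)}{4}\,\ell^{2},
\]
and $\pi/4<\sqrt{3}/2$, so this is strictly weaker than the claimed bound (indeed it is just the trivial estimate coming from the embedded disc of radius $\ell/2$ at $z$). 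The point is that the developed region in the tangent cone carries an extra constraint you have not used: its boundary must be a polygon whose edges glue in pairs to reassemble the closed surface. This polygonal/combinatorial constraint is precisely what prevents the boundary from being a round circle, and without encoding it your sector-by-sector argument cannot succeed. Your question marks and the mismatched count ``$2\cdot\#\{\text{triangles}\}$'' are symptoms of this.

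The paper instead works on the dual side, with the Delaunay decomposition. Each Delaunay $2$-cell is a convex Euclidean polygon (in the \emph{plane}, not the cone) whose vertices are lifts of $z$ and whose pairwise vertex distances are therefore at least $\sys(X)$; Fejes T\'oth's planar lemma then gives $\area(P)\ge\sqrt{3}\,(\sys(X)/2)^2$ for each cell (or, after subdividing, for each of the $4g-2$ triangles), and summing yields $\area(X)\ge(4g-2)\frac{\sqrt{3}}{4}\sys(X)^2$. Equality in each triangle forces equilaterality. If you want to rescue the cut-locus viewpoint, you must translate the gluing constraint into a statement about the boundary polygon of the developed region and then recover the Delaunay triangles from it; but at that point you are essentially redoing the paper's argument.
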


Theorem \ref{thm:oneconepoint} has been independently observed by Boissy and Geninska \cite{Boissy-Geninska}.

As indicated above, when $\omega$ has only one zero, each systole is homotopic
to a saddle connection of the same length. Smillie and Weiss \cite{Smillie-Weiss}
provided an upper bound on the length $\ell_0$ of the shortest saddle connection 
for surfaces $(X, d_{\omega})$ of genus $g$ and area 1. In particular, they showed
that $\ell_0 \leq \sqrt{1/\pi \cdot (2g-2+n)}$ where $n$ is the number of zeros of $\omega$.

We also identify optimal bounds for the number of homotopy classes 
of systoles of surfaces in $\Hcal(2g-2)$, and show that the optimal bounds are not 
attained by hyperelliptic surfaces in these strata. A condensed version of these results 
is the following (see Proposition \ref{thm:2g-2} and Theorem \ref{thm:hyperelliptic-2g-2}):

\begin{thm} \label{thm:condense}
If $\omega$ is a holomorphic 1-form on $X$ that has exactly one zero,
then $(X, d_{\omega})$ has at most $6g-3$ homotopy classes of systoles. 
If in addition $\omega$ is hyperelliptic, then $(X, d_{\omega})$ 
has at most $6g-5$ homotopy classes of systoles. Both bounds are realized. 
\end{thm}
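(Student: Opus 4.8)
The plan is to reduce everything to a combinatorial statement about saddle connections, exactly as signalled in the introduction. Since $\omega$ has a single zero $p$ (of order $2g-2$), each systolic loop is freely homotopic to a saddle connection of the same length starting and ending at $p$; call such a saddle connection \emph{systolic}. So I would first argue: two distinct homotopy classes of systoles give two distinct (as unoriented arcs) systolic saddle connections, and conversely. Hence it suffices to bound the number of systolic saddle connections. The key geometric input is that any two systolic saddle connections $\sigma_1,\sigma_2$ are \emph{disjoint in their interiors}: if they crossed at a smooth point, the standard surgery (swap the four arcs at the crossing) would produce a strictly shorter non-contractible loop, contradicting minimality — this is the perturbation argument already quoted in the discussion after Theorem \ref{thm:main}. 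Therefore the union of all systolic saddle connections forms an embedded graph $\Gamma$ on $X$ with a single vertex $p$; each systolic saddle connection is an edge, and (since they all have the same length and emanate from a cone point of total angle $2\pi(2g-2)$) the cyclic order of edge-ends around $p$ is controlled.

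Next I would run an Euler-characteristic count on $\Gamma \hookrightarrow X$. Let $E$ be the number of systolic saddle connections (the quantity we want to bound) and $F$ the number of complementary faces. Since $V=1$ we get $1 - E + F = 2 - 2g$, i.e. $E = 2g-1+F$. Each face is a polygon whose sides are geodesic saddle connections of equal length $\ell_0$ (the systolic length) meeting only at $p$; a face cannot be a monogon or a bigon — a monogon would bound a disk and give a contractible systole, and a bigon with both sides of length $\ell_0$ meeting at a smooth-free configuration again contradicts minimality via the bigon surgery. Wait — more carefully: a bigon face with both edges systolic would mean the two edges are homotopic rel endpoints, but as loops at $p$ they would then be homotopic, contradicting that they are distinct classes, OR the bigon is an immersed disk and one shortens. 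Either way, every face has at least $3$ sides, so counting side-incidences gives $2E \ge 3F$, whence $F \le 2E/3$ and, combined with $E = 2g-1+F$, we get $E \le 3(2g-1) = 6g-3$. That gives the first bound. For \emph{realizability}, exhibit a surface tiled by equilateral triangles with all vertices at the single zero (as in Theorems \ref{thm:oneconepoint-genus2} and \ref{thm:oneconepoint}): there the triangle edges are all systolic and there are exactly $6g-3$ of them, since a triangulation of a genus $g$ surface with one vertex has $V=1$, $E=6g-3$, $F=4g-2$. Reference Proposition \ref{thm:2g-2} for the details.

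For the hyperelliptic improvement I would invoke the hyperelliptic involution $\iota$, which fixes $p$ (the unique zero is a Weierstrass point) and acts as $-1$ on $H_1(X;\Zbb)$, hence on $\omega$ as $\iota^*\omega = -\omega$; in particular $\iota$ is an isometry of $d_\omega$ that reverses orientation on each saddle connection through $p$. The set of systolic saddle connections is $\iota$-invariant, so $\iota$ permutes the edges of $\Gamma$ and acts on the faces. The point is to show this symmetry forces the Euler count to lose at least two edges relative to the non-hyperelliptic optimum: one shows that $\Gamma$ cannot be a one-vertex triangulation with $6g-3$ edges while being $\iota$-invariant, because $\iota$ acting on the link of $p$ (a cyclic sequence of $2(2g-2)$ edge-germs, paired by the triangulation into $6g-3$ edges) would have to be the rotation by $\pi$ in angle, and tracking how this rotation pairs up edge-germs shows at least two of the would-be systolic triangles degenerate (their third side is forced longer than $\ell_0$), so $F \le 2g-3$ in the worst case, i.e. at best $E = 2g-1 + (4g-2) - 2 = 6g-5$; equivalently the side-count improves to $2E \ge 3F + 6$ in the hyperelliptic case. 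Realizability of $6g-5$ is again by an explicit construction (a hyperelliptic surface with $6g-5$ systoles, cf.\ Theorem \ref{thm:hyperelliptic-2g-2}).

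\textbf{Main obstacle.} The routine part is the Euler count; the genuinely delicate step is the hyperelliptic loss of $2$. One must rule out an $\iota$-invariant equilateral triangulation with one vertex, and the natural approach — analyzing the induced $\pi$-rotation on the link of $p$ and showing two triangles must collapse — requires care about orientation-reversal of $\iota$ and about whether "collapsed" triangles could be replaced by other systolic configurations elsewhere. I would structure this as: (1) $\iota$ fixes $p$ and its differential on $T_pX$ (in the flat chart, on the $2(2g-2)$ sectors) is rotation by $\pi(2g-2)/(2g-2)\cdot 2 = $ ... the angle-doubling cone makes this the antipodal map on directions; (2) an edge $e$ at $p$ is $\iota$-fixed (as unoriented arc) iff its two germs at $p$ are antipodal, and a face is $\iota$-fixed iff ...; (3) a fixed-point / parity count on the edge set of a hypothetical maximal $\Gamma$ yields the contradiction. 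I expect this to be where essentially all the work lies, and I would lean on the explicit hyperelliptic model to guide which two triangles are the "missing" ones.
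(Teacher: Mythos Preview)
Your Euler-count argument for the $6g-3$ bound is essentially the paper's argument (systolic saddle connections are Delaunay edges by Proposition~\ref{prop:shortDelaunayedge}, and these number at most $6g-3$ by Proposition~\ref{prop:Delaunay-count}) in different language. One small gap: your monogon/bigon exclusion assumes complementary faces are disks, which is not automatic for an arbitrary embedded one-vertex graph; the clean fix is to extend the systolic graph to a full one-vertex triangulation (equivalently, invoke Delaunay), whose $6g-3$ edges then bound the systolic ones. Your realizability claim for $6g-3$ is also incomplete: an equilateral one-vertex tiling is necessary but not sufficient, since you additionally need the $6g-3$ edges to be pairwise non-homotopic (no systolic cylinder). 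The paper gives explicit examples for $g=3,4,5$ and an inductive $g\mapsto g+3$ construction (see Remark~\ref{rmk:condor}); since every genus~$2$ surface is hyperelliptic, $6g-3$ is in fact \emph{not} realized when $g=2$.

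The hyperelliptic step is where your proposal has a genuine gap. Your proposed mechanism---that two triangles ``collapse'' with their third side forced longer than $\ell_0$, so that $2E\ge 3F+6$---is not what happens (and your arithmetic there does not yield $6g-5$). In the extremal case all $4g-2$ Delaunay $2$-cells are honest equilateral triangles and all $6g-3$ edges are systolic; nothing is longer. The correct mechanism is \emph{homotopy}, not length. The involution $\tau$ fixes the zero and has no fixed point in any triangle interior (an equilateral triangle has no orientation-preserving involutive isometry), so the remaining $2g+1$ Weierstrass points sit on edge-midpoints: exactly $2g+1$ Delaunay edges are $\tau$-fixed. A pigeonhole count on fixed-edge incidences among the $4g-2$ triangles then forces at least four triangles to have two fixed edges each; these come in $\tau$-pairs whose union is a cylinder, and the two non-fixed edges bounding each such cylinder are \emph{homotopic} systoles. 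Two cylinders give two homotopic pairs, hence at most $6g-3-2=6g-5$ classes. The paper then reruns analogous Weierstrass-point counts in the subcases of $6g-4$ and $6g-5$ systolic Delaunay edges. Your link-of-$p$ fixed-point idea is in the right spirit, but the payoff is pairs of homotopic systolic edges arising from $\tau$-invariant cylinders, and the count that drives it is of Weierstrass points on edges, not a parity argument on the link.
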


The bulk of the present paper verifies Theorem \ref{thm:main} 
for the stratum  $\Hcal(1,1)$.
The proof begins in \S 4 where we show that each nonseparating systole is 
homotopic to a systole that passes through exactly two Weierstrass points. 
Such a systole is divided by the Weierstrass points into two geodesic arcs
that are interchanged by the hyperelliptic involution $\tau$.  
We regard each such `systolic Weierstrass arc' as an arc on 
$X / \langle \tau \rangle$ that joins the two corresponding angle $\pi$ cone points.
If a Weierstrass arc misses the angle $4\pi$ cone point on $X / \langle \tau \rangle$
that corresponds to the zeros of $\omega$, then we will call it `direct'.
Otherwise, the arc will be called `indirect'.  
In \S 5 we show that for each angle $\pi$ cone point $c$ there are
at most two direct systolic Weierstrass arcs that have an endpoint at $c$, 
and hence there are at most six homotopy classes of systoles that correspond
to direct Weierstrass arcs. 

The angle $4\pi$ cone point $c^*$ divides each indirect systolic Weierstrass arc into two subarcs
that we call `prongs'. Observe that if some prong has length $\ell \leq \sys(X)/4$,
then each of the other prongs emanating from $c^*$ has length $\sys(X)/2 -\ell$. 
In \S 6 we show that if all of the prongs have the same length---necessarily $\sys(X)/4$---then 
there are at most four prongs, and if there is a `short' prong of length $\ell < \sys(X)/4$,
then there are at most five prongs. In the former case, we obtain 
at most six indirect systolic Weierstrass arcs and in the latter case, we obtain
at most five indirect systolic Weierstrass arcs.\footnote{Note that a particular prong can lie in more 
than one systolic Weierstrass arc.} 

In \S 7 we show that there is at most one systole
that is a separating curve. Moreover, we show that if the surface has a  
systole which is a separating curve, then the surface has either no prongs 
or exactly two prongs of equal length. It follows that a surface with a separating systole 
has at most eight homotopy classes of separating systoles. 

In \S 8, we show that if there are exactly four prongs of equal length, 
then the surface has at most ten homotopy classes of systoles, and if there are ten,
then the surface is homothetic to the surface described in Figure \ref{X10Figure}. 
In \S 9, we show that if one of the prongs is shorter than the others, then 
there are at most none homotopy classes of systoles. This finishes the proof
of Theorem \ref{thm:main} in the case of surfaces from the stratum $\Hcal(1,1)$.

Although the questions that we address in this paper regarding systoles have not been systematically 
studied previously in the context of translation surfaces, 
they have been studied in the context of hyperbolic and general Riemannian surfaces. 
As hinted at above, smooth surfaces have systoles that intersect at most once, and from 
this one can deduce that there are at most $12$ homotopy classes of systole in genus two 
(see for instance \cite{Malestein-Rivin-Theran}). 
This bound is sharp. Indeed, among hyperbolic surfaces of genus two, there is a unique surface, 
called the Bolza surface, with exactly 12 systoles. It can be obtained by gluing opposite edges 
of a regular hyperbolic octagon with all angles $\frac{\pi}{4}$. This same surface is also optimal 
(again among hyperbolic surfaces) for systolic ratio, a result of Jenni \cite{Jenni}. 
There are bounds on these quantities in higher genus, but these bounds are not optimal.  
Interestingly, Katz and Sabourau \cite{Katz-Sabourau} showed that among $\rm{CAT}(0)$ genus 
two surfaces, the optimal surface is an explicit flat surface with cone point singularities, 
conformally equivalent to the Bolza surface. This singular surface can not be optimal among 
all Riemannian surfaces however, as by a result of Sabourau, the optimal surface in genus two 
necessarily has a region with positive curvature \cite{Sabourau}. The optimal systolic ratio 
among all Riemannian surfaces is still not known. 

{\bf Acknowledgements.}
We are grateful to the referee for a careful reading of the paper and valuable comments. 
We thank Marston Condor for the examples in Remark \ref{rmk:condor}. We thank Carlos Matheus 
Santos and Gabriela Weitze-Schmith\"usen for kindly pointing out some mistakes 
as well as some missing references in earlier versions. H. P. acknowledges support from U.S. 
National Science Foundation grants DMS 1107452, 
1107263, 1107367 RNMS: Geometric structures And Representation varieties (the GEAR Network).
C. J. acknowleges support from the Simons Foundation. 


\section{Facts concerning the geometry of $(X, d_{\omega})$} \label{section:lfacts}

We collect here some relevant facts about the geometry of the surface $(X, d_{\omega})$
sometimes called a `translation surface'. Much of this material can be found in, for example,
\cite{Masur-Smillie}, \cite{Gutkin-Judge}, and \cite{Broughton-Judge}.

\subsection{Integrating the 1-form}

By integrating the holomorphic $1$-form $\omega$ along a piecewise differentiable
path $\alpha: [a,b] \rightarrow X$, we obtain a path in
$\oalpha: [a,b] \to \Cbb$ defined by
\begin{equation} \label{defn:path-holonomy}
\oalpha(t)~ =~ \int_{\alpha|_{[a,t]}} \omega.
\end{equation}
Since $\omega$ is closed, if two paths $\alpha$, $\beta$ in $X$
are homotopic rel endpoints, then $\oalpha$ and $\obeta$ are homotopic rel endpoints.
Thus, if $U \subset X$ is simply connected neighborhood of a point $x$, then
\begin{equation} \label{defn:chart}
 \mu_{x,U}(y)~ :=~ \int_{\alpha_y} \omega
\end{equation}
is independent of the path $\alpha_y$ joining $x$ to $y$.
Note that $\mu_{x,U}$ is a holomorphic map from $U$ into $\Cbb$.
If $x$ is not a zero of $\omega$, then it follows from the inverse function
theorem that there exists a neighborhood $U$ so that $\mu_{x,U}$ is
a biholomorphism onto its image.

\subsection{The metric}

The norm, $|\omega|,$ of $\omega$ defines an arc length element on $X$.
We will let $\ell_{\omega}(\alpha)$ 
denote the length of a path on $X$, and we will let $d_{\omega}$ denote the metric
obtained by taking the infimum of lengths of paths joining two points.

If $x$ is not a zero of $\omega$ and $U$ is a simply connected neighborhood of $x$,
then $\mu_{x,U}$ is a local isometry from $U$ into $\Cbb$ equipped with its usual Euclidean 
metric $|dz|^2$. If, in addition, $U$ is star convex at $x$, then $\mu_{x,U}$ is an isometry 
onto its image.

If $x$ is a zero of $\omega$ of order $k$, then there exists a neighborhood $V$ of $x$
and a chart $\nu: V \to \Cbb$ such that $\omega= (k+1) \cdot \nu^*(z^{k} dz)) = \nu^*(d (z^{k+1}))$ 
and $\nu(x)=0$. If $V$ is sufficently small, the map $\nu$ is an isometry from $(V, d_{\omega})$ 
to $(\nu(V), d_{d (z^{k+1})})$. In turn, the map $z \mapsto z^{k+1}$ is a local isometry from 
$(\nu(V) - \{0\}, d_{d (z^{k+1})})$ to a neighborhood of the origin with the Euclidean metric 
$|dz|^2$. Since the branched covering $z \mapsto z^{k+1}$ has degree $k+1$, the arc length 
of the boundary of an $\epsilon$-neighborhood of $x$ is $2\pi (k+1) \cdot \epsilon$. 
Therefore, we refer to $x$ as a {\em cone point} of angle $2 \pi (k+1)$, and the set of zeros, 
denoted $Z_\omega$, will be regarded as the set of cone points of $(X, d_{\omega})$. 

\subsection{Universal cover, developing map and holonomy}
Let $p: \tX \to X$ be the universal covering map, and let $\tomega = p^*(\omega)$.
If we let $d_{\tomega}$ be the associated metric on $\tX$, then $p$ is a local
isometry from $(\tX, d_{\tomega})$ onto $(X, d_{\omega})$. Since $\tX$ is simply connected, 
we may fix $\tx_0 \in \tX$ and integrate $\tomega$ as in (\ref{defn:chart}) to obtain a 
map $\dev: \tX \to \Cbb$ called the {\em developing map}. The restriction of $\dev$ to 
$\tX - Z_{\tomega}$ is a local biholomorphism and a local isometry. Each zero of $\tomega$ 
is a branch point whose degree equals the order of the zero. If $C$ is the closure of a 
convex subset of $\tX - Z_{\tomega}$, then the restriction of $\dev$ to $C$ is injective.
 
Let $x_0= p(\tx_0)$, and consider loops $\alpha$ in $x$ based at $x_0$. The assignment 
$\alpha \to \oalpha$ defines a homomorphism, $\hol$, from $\pi_1(X, x_0)$ to the additive group $\Cbb$. 
Moreover, for each $[\alpha] \in \pi_1(X, x_0)$ and $\tx \in \tX$ we have
\begin{equation} \label{dev-equivariance}
 \dev( [\alpha] \cdot \tx)~ =~ \dev(\tx)~ +~ \hol([\alpha])
\end{equation}
where $\alpha \cdot \tx$ denotes action by covering transformations. 
See, for example, \cite{Gutkin-Judge}.

\subsection{Geodesics}
If a geodesic $\gamma$ on $(X, d_{\omega})$ passes through a zero of $\omega$, then
$\gamma$ will be called {\em indirect} and otherwise {\em direct}. If $\gamma$ is a direct simple geodesic loop, then, since $Z_{\omega}$ is finite, for sufficiently small $\epsilon>0$, the $\epsilon$-tubular neighborhood, $N$, of $\gamma$ is disjoint from $Z_{\omega}$. Each lift $\tN \subset \tX$ of $N$ is convex and hence the restriction of the developing map to $\tN$ is an isometry onto $\dev(\tN)$. Since $\tN$ is stabilized by the cyclic subgroup $\langle \gamma \rangle$ of the deck group generated by $\gamma$, it follows from (\ref{dev-equivariance}) that $\dev(\tN)$ is the convex hull of two parallel lines, and, moreover, the map $\dev$ determines an isometry from $N$ to 
$\dev(\tN)/ \langle \hol( \gamma) \rangle$. In particular, $N$ is isometric to a Euclidean cylinder $[0,w] \times \Rbb/\ell \Zbb$ where $\ell= |\hol(\gamma)|$ and $w$ is the distance between the parallel lines. If $Z_{\omega} \neq 0$, then the union of all Euclidean cylinders embedded in $X-Z_{\omega}$ that contain $\gamma$ is a cylinder called the {\em maximal cylinder} associated to $\gamma$. Each component of the frontier of a maximal cylinder consists of finitely many indirect geodesics.

\begin{prop} \label{prop:passes-thru-zero}
If $\omega$ has at least one zero, then
each homotopy class of loops is represented by a
geodesic loop that passes through a zero of $\omega$. 
\end{prop}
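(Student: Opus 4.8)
The plan is to start with an arbitrary free homotopy class of loops and produce a length-minimizing geodesic representative within that class, then argue that such a minimizer must pass through a zero of $\omega$. First I would observe that every free homotopy class contains a closed geodesic: the class is determined by a conjugacy class in $\pi_1(X)$, and by the standard Arzel\`a--Ascoli / lower-semicontinuity of length argument on the compact space $(X,d_\omega)$ one obtains a loop $\gamma$ of minimal length in the class. Away from $Z_\omega$ the metric is flat, so $\gamma$ is locally a straight Euclidean segment there, and at a cone point it meets the minimality constraint that both complementary angles are at least $\pi$ (this is the usual first-variation condition for geodesics on cone surfaces, valid since a cone angle $\geq 2\pi$ forces at least one side $\geq\pi$; and if both sides were $>\pi$ strictly there is no local shortening).

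Next I would handle the case where the minimizer $\gamma$ is \emph{direct}, i.e. misses $Z_\omega$ entirely; this is the heart of the argument. If $\gamma$ is direct then it is a closed Euclidean geodesic in the flat part, so by the discussion of the \S\ref{section:lfacts} subsection on geodesics it lies in an embedded Euclidean cylinder, and in fact in the maximal cylinder $C$ associated to $\gamma$. Since $\omega$ has at least one zero, $C$ is not all of $X$ (a translation surface with nonempty $Z_\omega$ is not a torus), so $C$ is a genuine finite cylinder $[0,w]\times\Rbb/\ell\Zbb$, and each boundary component of $C$ contains a zero of $\omega$ — indeed the frontier of a maximal cylinder is made up of indirect geodesics, which by definition pass through $Z_\omega$. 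Now I can slide $\gamma$ across $C$ by a homotopy to either frontier component; the core curves of the cylinder all have the same length $\ell$, so this homotopy is through curves of length exactly $\ell=\length_\omega(\gamma)$, and in the limit we land on a curve in the frontier of $C$, which is a concatenation of indirect geodesics of total length $\ell$. This limiting curve is homotopic to $\gamma$, has the same length, and passes through a zero of $\omega$ — it may need to be slightly adjusted to an honest geodesic loop (take a geodesic minimizer in the homotopy class of this boundary curve; its length is $\leq\ell$, and since $\gamma$ was already a minimizer in the class its length equals $\ell$, and one checks it still meets a zero because any direct geodesic of length $\ell$ homotopic to the core would again lie in $C$, contradicting maximality of $C$ unless it coincides with a boundary curve).

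If instead the minimizer $\gamma$ is already indirect, there is nothing to prove. So in all cases the homotopy class is represented by a geodesic loop meeting $Z_\omega$, which is the claim.

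\textbf{Main obstacle.} The delicate point is the direct case: passing from ``$\gamma$ lies in a maximal cylinder whose boundary contains zeros'' to ``there is a geodesic loop of the same length, in the same homotopy class, through a zero.'' One must be careful that sliding $\gamma$ to the boundary and then geodesic-minimizing does not drop below length $\ell$ (it cannot, since $\gamma$ is a global minimizer in its class) and does not accidentally escape the cylinder's closure into another direct geodesic (ruled out by maximality of the cylinder: a direct closed geodesic homotopic to the core and of length $\ell$ would be a core curve of some embedded cylinder containing it, forcing it back inside $C$). A clean way to phrase this is: among all loops homotopic to $\gamma$ and of length exactly $\ell$, the closure of their union is a closed flat subcylinder, which is precisely $\overline{C}$, and its topological boundary — nonempty because $Z_\omega\neq\emptyset$ — consists of such length-$\ell$ loops meeting $Z_\omega$; any one of them is the desired representative (after replacing it by a geodesic in its class if it is not already geodesic, which does not increase length and, by the argument above, cannot decrease it either).
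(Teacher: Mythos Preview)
Your proof is correct and follows essentially the same approach as the paper: take a geodesic representative, and if it is direct, observe that it lies in a maximal cylinder and replace it by a boundary component of that cylinder, which passes through a zero. Your ``main obstacle'' worry is unnecessary: the boundary component has the same length $\ell$ as the core curve $\gamma$ and lies in the same homotopy class, so since $\gamma$ is already a length-minimizer in that class, the boundary curve is automatically a closed geodesic through $Z_\omega$ with no further adjustment needed.
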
 

\begin{proof}
Since $X$ is compact, a homotopy class of simple loops has a geodesic representative $\gamma$.
If $\gamma$ does not pass through a zero, then $\gamma$ lies in a maximal cylinder.
The boundary of the maximal cylinder contains a geodesic representative that
passes through a zero. 
\end{proof}

\begin{prop} \label{prop:maxcyl}
If two direct 
simple geodesic loops are homotopic, then they lie in the closure of the same maximal cylinder.
\end{prop}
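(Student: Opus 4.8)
The goal is to show that two homotopic direct simple geodesic loops $\gamma_0$ and $\gamma_1$ lie in the closure of the same maximal cylinder. I would argue by passing to the universal cover and using the developing map, exploiting the fact that each direct simple geodesic loop is a core curve of an embedded Euclidean cylinder (by the discussion preceding Proposition~\ref{prop:passes-thru-zero}), and that cylinders with parallel core curves either coincide in their maximal extension or are disjoint.

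\textbf{Step 1: lift to the universal cover.} Fix a common basepoint-free homotopy between $\gamma_0$ and $\gamma_1$; it determines a conjugacy class in $\pi_1(X)$, hence a primitive deck transformation $T$ up to conjugacy. Choose lifts $\tgamma_0, \tgamma_1 \subset \tX$ that are both stabilized by the \emph{same} deck transformation $T$ (this is possible since they are freely homotopic: conjugate so the axes correspond). Because each $\gamma_i$ is direct and simple, the $\epsilon$-neighborhood $N_i$ of $\gamma_i$ is an embedded Euclidean cylinder disjoint from $Z_\omega$, and each lift $\tN_i$ is convex and disjoint from $Z_{\tomega}$, so $\dev|_{\tN_i}$ is an isometry onto the convex hull of two parallel lines in $\Cbb$, with $\dev$ conjugating $T$ to translation by $\hol(\gamma_i)$.

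\textbf{Step 2: the holonomy vectors are parallel.} Since $\gamma_0$ and $\gamma_1$ are freely homotopic, $[\gamma_0]$ and $[\gamma_1]$ are conjugate in $\pi_1(X)$, so $\hol(\gamma_0) = \hol(\gamma_1) =: v$ by (\ref{dev-equivariance}) (holonomy is a homomorphism to the abelian group $\Cbb$, hence conjugation-invariant). Thus $\dev(\tgamma_0)$ and $\dev(\tgamma_1)$ are both straight lines in direction $v$, and $\dev(\tN_0)$, $\dev(\tN_1)$ are infinite strips parallel to $v$. Both strips are invariant under translation by $v$, and both are images under $\dev$ of convex sets in $\tX\setminus Z_{\tomega}$ on which $\dev$ restricts to an isometry.

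\textbf{Step 3: enlarge and compare.} Now take the maximal cylinder $M_i$ containing $\gamma_i$: by definition it is the union of all embedded Euclidean cylinders in $X\setminus Z_\omega$ containing $\gamma_i$, and its frontier consists of indirect geodesics (passing through zeros of $\omega$). A lift $\tM_i$ develops isometrically onto a maximal parallel strip $S_i \subset \Cbb$ whose two boundary lines each contain an image of a zero of $\tomega$. The core question becomes: do $\tM_0$ and $\tM_1$ (chosen sharing the stabilizer $\langle T\rangle$) coincide? Suppose not. Then, since $\gamma_0$ and $\gamma_1$ are homotopic \emph{rel nothing} but as simple loops they can be chosen disjoint or equal—here is where I would invoke simplicity: either $\tM_0 = \tM_1$, or the two strips are disjoint, or one properly contains a portion of the other. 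If $\tM_0 \cap \tM_1 \neq \emptyset$ but $\tM_0 \neq \tM_1$, then since both are maximal parallel strips in direction $v$ whose union is still a convex strip free of $Z_{\tomega}$ in the overlap region, their union would be a strictly larger embedded cylinder in $X \setminus Z_\omega$ containing $\gamma_0$ (or could be pushed to one), contradicting maximality of $M_0$—unless $\tM_0 \subseteq \tM_1$, forcing $M_0 \subseteq M_1$ and then $M_0 = M_1$ by symmetry. If instead $\tM_0 \cap \tM_1 = \emptyset$, then $\gamma_0$ and $\gamma_1$ cobound an embedded annulus in $X$ (being freely homotopic simple curves) all of whose interior can be swept; but the region between them, developed, sits between the two strips and must contain a zero of $\omega$ on each frontier component, and a minimal-length argument (the shortest loop in the homotopy class lying in this annular region) shows the cores actually lie in a common cylinder after all. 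The cleanest route is: the maximal cylinder is intrinsically characterized by the homotopy class plus the parallel direction, so homotopic direct loops, having equal holonomy, determine the same maximal cylinder.

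\textbf{Main obstacle.} The delicate point is Step~3: ruling out that two homotopic direct simple geodesics sit in \emph{distinct} maximal cylinders. Two non-homotopic simple closed geodesics can certainly lie in different parallel cylinders with the same holonomy direction, so parallelism alone is not enough; one must genuinely use that the loops are homotopic \emph{and} that the maximal cylinder's frontier is pinned down by zeros of $\omega$. I expect the argument to come down to: a free homotopy between $\gamma_0$ and $\gamma_1$ lifts to a bounded homotopy between $\tgamma_0$ and $\tgamma_1$ in $\tX$; their developed images are then two parallel lines a bounded distance apart in the same strip structure; and any embedded Euclidean cylinder is bounded by indirect geodesics, so if $\gamma_0$ and $\gamma_1$ were separated by such a frontier geodesic $\delta$ passing through a zero, then $\delta$ would be non-contractible and would have to be crossed by the homotopy, but a geodesic through a zero cannot be crossed by a homotopy that stays a direct-loop homotopy without increasing length—forcing $\delta$ to be disjoint from (or equal to a frontier component shared by) both maximal cylinders, whence $M_0 = M_1$.
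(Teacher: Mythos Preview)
Your Steps 1 and 2 are fine: the lifts $\tgamma_0,\tgamma_1$ share a stabilizer $T$ and develop to parallel lines in direction $v=\hol(\gamma_0)=\hol(\gamma_1)$. The gap is in Step 3. None of your case arguments actually closes. In the overlapping case you assert that $\tM_0\cup\tM_1$ would give a larger embedded cylinder, but injectivity of $\dev$ on each $\tM_i$ separately does not give injectivity on the union, so you cannot conclude the union descends to an embedded cylinder in $X\setminus Z_\omega$. In the disjoint case your ``minimal-length argument'' is not an argument, and your ``cleanest route'' (the maximal cylinder is determined by the homotopy class) is precisely the statement to be proved. The sketch in your Main Obstacle paragraph about a frontier geodesic $\delta$ that ``cannot be crossed by the homotopy'' is not correct either: homotopies can and do cross geodesics through cone points.

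What you are missing is a single structural fact that replaces all of Step 3. The universal cover $(\tX,d_{\tomega})$ is CAT(0), because every cone angle is at least $2\pi$. Your two lifts $\tgamma_0,\tgamma_1$ are bi-infinite geodesics invariant under the same translation $T$, hence asymptotic in both directions. The flat strip theorem (see Bridson--Haefliger) then says their convex hull is isometric to $[0,w]\times\Rbb$. Since a cone point of angle $>2\pi$ cannot sit in the interior of a Euclidean strip, the interior $I$ of this convex hull contains no zeros of $\tomega$; hence $\dev|_I$ is an isometry onto a Euclidean strip and $I/\langle T\rangle$ is an embedded Euclidean cylinder in $X\setminus Z_\omega$ with $\gamma_0,\gamma_1$ on its boundary. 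Both loops therefore lie in the closure of the same maximal cylinder. This is exactly the paper's proof; the CAT(0) flat strip theorem is the key lemma your proposal is circling around but never invokes.
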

\begin{proof}
Because the angle at each cone point $\tz \in Z_{\tomega}$
is greater than $2 \pi$, the length space $(\tX, d_{\tomega})$ is CAT(0). If two geodesic
loops $\gamma$ and $\gamma'$
are homotopic, then they have lifts that are asymptotic in $(\tX, d_{\tomega})$.
By the flat strip theorem \cite{BridsonHaefliger}, the convex hull of the two lifts
is isometric to a strip $[0,w] \times \Rbb$. Thus, since each cone point has
angle larger than $2\pi$, the interior $I$ of the convex hull contains no cone points.
The developing map restricted to $I$ is an isometry
onto a strip in $\Cbb$, and, moreover, it induces an isometry from
$I/ \langle g \rangle$
to the cylinder $\dev(I)/\langle \hol(g) \rangle$ where $g$ is the deck transformation
associated to the common homotopy class of $\gamma$ and $\gamma'$. Since the lifts
are boundary components of $I$, the loops $\gamma$ and $\gamma'$ lie in the boundary
of the cylinder $\dev(I)/\langle \hol(g) \rangle$.
\end{proof}


\subsection{The Delaunay cell decomposition} \label{subsec:delaunay}

The Delaunay decomposition is well-known in the context of complete constant curvature
geometries. Thurston observed that the construction also applies to constant
curvature metrics with conical singularities \cite{Thr98}.

We will first describe the Delaunay decomposition of the universal cover $\tX$. 
Given $\tx \in \tX -Z_{\tomega}$, let $D_{\tx}$ be the largest open disk centered at
$\tx$ that does not intersect $Z_{\tomega}$. Since $D_{\tx}$ is convex, the restriction
of $\dev$ to the closure $\oD_{\tx}$ is an isometry onto a closed Euclidean disk in $\Cbb$.
Since $Z_{\tomega}$ is discrete, the intersection $Z_{\tomega} \cap \oD_{\tx}$ is finite.
Let $\Vcal$ be the set of $\tx \in \tX-Z_{\tomega}$ such that $Z_{\tomega} \cap \oD_{\tx}$ 
contains at least three points. Because three points determine a circle,
the set $\Vcal$ is discrete.

For each $\tx \in \Vcal$, let $P_{\tx}$ denote the convex hull of $Z_{\tomega} \cap \oD_{\tx}$. It is isometric to a convex polygon in the plane. Again, because three points determine a circle, if $\tx, \ty \in \Vcal$ and $\tx \neq \ty$, then the set $Z_{\tomega} \cap \oD_{\tx} \cap \oD_{\ty}$ consists of at most two points, and hence $P_{\tx} \cap P_{\ty}$ is either empty, a point, or a geodesic arc lying in both the boundary of $P_{\tx}$ and the boundary of $P_{\ty}$. The interior of $P_{\tx}$ is called a {\em Delaunay 2-cell} and
the boundary edges are called {\em Delaunay edges}. The vertex set of this decomposition of $\tX$ is the set of zeros of $\tomega$. 

The deck group of the universal covering map $p$ permutes the cells of the
Delaunay decomposition, and so we obtain a decomposition of $X$.
Note the restriction of $p$ to each $2$-cell $P$ is an isometry onto its image.
Indeed, if not then there exists a covering transformation $\gamma$,
a lift $\tP$ of $P$, and $\tx \in \tP$ such that $\gamma \cdot \tx \in \tP$. 
Since $\tP$ is convex, it follows that for some vertex $\tz \in \tP$, we would have 
$\gamma \cdot \tz \in \tP$. But $\gamma$ maps $Z_{\tomega}$ to itself. 

Our interest in the Delaunay decomposition stems from the following.

\begin{prop} \label{prop:shortDelaunayedge}
If $\alpha$ is a shortest non-null homotopic arc with both endpoints in $Z_{\omega}$,
then $\alpha$ is a Delaunay edge.
\end{prop}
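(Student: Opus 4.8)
The plan is to show that a shortest non-null-homotopic arc $\alpha$ joining zeros of $\omega$ must be an edge of the Delaunay decomposition by lifting everything to the universal cover $\tX$ and using the "empty disk" characterization of Delaunay edges. First I would lift $\alpha$ to an arc $\talpha$ in $\tX$ with endpoints $\tz_0, \tz_1 \in Z_{\tomega}$; since $p$ is a local isometry and $\alpha$ is length-minimizing in its homotopy class, $\talpha$ is the geodesic (straight) segment from $\tz_0$ to $\tz_1$, and its $d_{\tomega}$-length equals $\ell := \ell_\omega(\alpha)$. The key point to establish is that the open disk $B$ in $\tX$ whose diameter is the segment $\talpha$ — equivalently, the disk of radius $\ell/2$ centered at the midpoint $\tx$ of $\talpha$ — contains no point of $Z_{\tomega}$ in its interior. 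Indeed, suppose some $\tz_2 \in Z_{\tomega}$ lies in the interior of $B$. Because $\tz_2$ is inside the disk with diameter $[\tz_0,\tz_1]$, the angle $\angle \tz_0 \tz_2 \tz_1$ is strictly greater than $\pi/2$ (this is the inscribed-angle / Thales argument, valid since $B$ is convex and the developing map restricted to $\overline{B}$ is an isometry onto a Euclidean disk). Hence in the Euclidean triangle with those three vertices, the side opposite $\tz_2$, namely $[\tz_0,\tz_1]$, is the strictly longest side, so both $[\tz_0,\tz_2]$ and $[\tz_2,\tz_1]$ are strictly shorter than $\ell$.

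Next I would project these two short segments back down to $X$ to get arcs $\alpha_0, \alpha_1$ from $p(\tz_0)=\alpha(0)$ to $p(\tz_2)$ and from $p(\tz_2)$ to $p(\tz_1)=\alpha(1)$, each with both endpoints in $Z_\omega$ and each of $d_\omega$-length $< \ell$. By minimality of $\ell$ among non-null-homotopic such arcs, each of $\alpha_0$ and $\alpha_1$ must be null-homotopic rel endpoints (as an arc, i.e. it is homotopic into a point, which forces $p(\tz_0)=p(\tz_2)$ and $p(\tz_2)=p(\tz_1)$ with the arc contractible). Concatenating, $\alpha_0 \cdot \alpha_1$ is then homotopic rel endpoints to a constant, hence to $\alpha$ being null-homotopic — contradicting the hypothesis that $\alpha$ is non-null-homotopic. (One has to be slightly careful: what "null-homotopic arc with endpoints in $Z_\omega$" means when the endpoints coincide versus when they differ; I would phrase minimality as: $\ell$ is the infimum of lengths of arcs with endpoints in $Z_\omega$ that are not homotopic rel $Z_\omega$ to a constant path or to a trivial loop, and note that a short arc lifting to a segment strictly inside $\tX$ whose endpoints project to distinct points of $Z_\omega$ would already be a competitor.) Either way, the upshot is that $\tz_0$ and $\tz_1$ are the only zeros of $\tomega$ in $\overline{B}$, so in particular the open disk $B$ is disjoint from $Z_{\tomega}$ and $\overline{B}$ meets $Z_{\tomega}$ exactly in $\{\tz_0,\tz_1\}$.

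Finally I would identify $\talpha$ as a Delaunay edge. Shrink $B$ slightly and slide its center: for $\tx'$ near the midpoint $\tx$ but perturbed off the segment, the largest empty disk $D_{\tx'}$ centered at $\tx'$ has both $\tz_0$ and $\tz_1$ on its boundary circle; choosing $\tx'$ on one side and then the other, we obtain two points of $\Vcal$ (centers of disks whose closure contains $\geq 3$ zeros, or at least we can perturb within the circle through $\tz_0,\tz_1$ to pick up a third zero on the appropriate side, or else the strip between the two tangent configurations already exhibits $[\tz_0,\tz_1]$ as a shared boundary edge of two Delaunay cells) — in any case the segment $[\tz_0,\tz_1]$ appears on the boundary of the convex hull $P_{\tx'}$, so it is a Delaunay edge. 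Projecting by $p$, which is an isometry on each Delaunay cell and its edges, shows $\alpha$ is a Delaunay edge of $X$. \emph{The main obstacle} I anticipate is the last step: carefully arguing that the empty disk with $\tz_0,\tz_1$ on its boundary can be enlarged/moved to a genuine Delaunay disk containing a third zero (or handling the degenerate case where the Delaunay cell on one side is a triangle with $[\tz_0,\tz_1]$ as an edge versus where it might a priori look like a "bigon"), i.e.\ matching the "diametral empty disk" picture to the precise definition of $\Vcal$ and $P_{\tx}$ given in the text; the Thales/longest-side argument itself and the minimality contradiction are routine.
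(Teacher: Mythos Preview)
Your approach is the paper's: lift to $\tX$, show that the open disk with diameter $[\tz_0,\tz_1]$ meets $Z_{\tomega}$ only in $\{\tz_0,\tz_1\}$, then locate a Delaunay $2$-cell having this segment as an edge.  Two remarks.

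First, the Thales/concatenation detour is unnecessary.  If some $\tz_2\in Z_{\tomega}$ lies in the open disk, then $d_{\tomega}(m,\tz_2)<\ell/2$ where $m$ is the midpoint, so the triangle inequality through $m$ gives $d_{\tomega}(\tz_0,\tz_2)<\ell$ directly.  Since $\tz_0\neq\tz_2$ in $\tX$, the projected arc $p([\tz_0,\tz_2])$ is automatically non-null-homotopic rel endpoints in $X$ (a null-homotopic arc lifts to a path with coinciding endpoints), so you get the contradiction without ever invoking $\tz_1$ or concatenating.  This is exactly how the paper argues it.

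Second, the step you flag as the obstacle is resolved in the paper by the pencil of circles through $\dev(\tz_0)$ and $\dev(\tz_1)$: these are the circles with centers on the perpendicular bisector of the segment, and $\partial B$ is the smallest one.  Slide the center along this bisector in one direction; the radius increases, the arc on the far side of the chord sweeps outward, and by compactness of $X$ some zero of $\tomega$ must eventually be hit.  At the first such moment the disk $D'$ has $\tz_0,\tz_1$ and at least one more zero on its boundary and none in its interior, so its center lies in $\Vcal$; since the additional zeros all lie on the far side of the chord $[\tz_0,\tz_1]$, that chord is a boundary edge of $P_{c}$.  This is the precise version of your ``perturb the center'' idea; your description of moving $\tx'$ ``off the segment'' should be replaced by moving it along the perpendicular bisector so that both $\tz_0$ and $\tz_1$ remain on the boundary circle.
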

\begin{proof}
Since the universal covering map $p$ preserves the length of arcs, it suffices to prove that the 
analogous
statement holds for the universal cover $\tX$. Because $\alpha$ is a shortest arc, if $m$ is
the midpoint of $\alpha$, then the largest disc $D$ centered at $m$ has diameter equal to $\ell(\alpha)$
and $\oD \cap Z_{\tomega}$ consists of exactly two points, the endpoints $z$ and $z'$ of $\alpha$. 
The circle $\dev(\partial D)$ belongs to the pencil of circles containing $\dev(z)$ and $\dev(z')$.
Since $X$ is compact, by varying over this pencil, we find a disk $D'$ so that $\oD' \cap Z_{\tomega}$ contains $z$, $z'$, and at least one other point. The center $c$ of $D'$ belongs to $\Vcal$ and $\alpha$ is a boundary edge of the polygon $P_{c}$.
\end{proof}

\begin{prop} \label{prop:Delaunay-count}
Let $\omega$ be a holomorphic 1-form on a closed surface of genus $g$.
If $\omega$ has $v$ zeros, then the Delaunay decomposition of $X$ has
at most $6g-6+3\cdot v$ edges and the number of 2-cells is at most $4g-4+2 \cdot v$.
Equality holds if and only if each $2$-cell is a triangle.
\end{prop}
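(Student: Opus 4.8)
The plan is to apply the Euler characteristic formula to the Delaunay cell decomposition of $X$. Let $V$, $E$, and $F$ denote the number of vertices, edges, and $2$-cells, respectively. By the preceding discussion, the vertex set of the Delaunay decomposition is exactly $Z_\omega$, so $V = v$. Since the decomposition of $X$ is a genuine CW (or polygonal) decomposition of the closed surface of genus $g$, Euler's formula gives $V - E + F = 2 - 2g$, hence $E - F = 2g - 2 + v$.

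Next I would bound $E$ in terms of $F$ by a double-counting (incidence) argument. Each Delaunay $2$-cell $P$ is isometric to a convex polygon in the plane with at least three vertices, hence with at least three boundary edges; and each edge of the decomposition lies in the boundary of at most two $2$-cells. Summing the number of boundary edges over all $2$-cells counts each edge at most twice and at least once, but more to the point, $\sum_{P} (\text{number of edges of } P) \geq 3F$, while this same sum equals either $2E$ (if every edge borders two cells) or is bounded above by $2E$ in general. One subtlety: on a closed surface with a polygonal decomposition coming from the Delaunay construction every edge does border exactly two $2$-cells (counting with multiplicity if a cell meets an edge from both sides), so in fact $\sum_P (\text{edges of } P) = 2E$. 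Combining, $2E \geq 3F$, i.e. $F \leq \tfrac{2}{3} E$, with equality if and only if every $2$-cell is a triangle.

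Now substitute. From $E - F = 2g-2+v$ and $F \leq \tfrac{2}{3}E$ we get $E - \tfrac{2}{3}E \leq E - F = 2g-2+v$, hence $\tfrac{1}{3} E \leq 2g-2+v$, so $E \leq 6g-6+3v$. Feeding this back, $F = E - (2g-2+v) \leq (6g-6+3v) - (2g-2+v) = 4g-4+2v$. In both inequalities the only place slack was introduced is the step $3F \leq 2E$, so equality holds in either bound precisely when equality holds there, that is, precisely when every Delaunay $2$-cell is a triangle. This proves the proposition.

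The only real point requiring care — and the step I expect to be the main obstacle to write cleanly — is justifying that the Delaunay decomposition is a bona fide polygonal cell decomposition of the closed surface to which Euler's formula and the edge-counting apply, including the degenerate possibilities (an edge bordering the same $2$-cell twice, or a $2$-cell that is not simply embedded). The earlier discussion already establishes that $p$ restricts to an isometry on each $2$-cell and that distinct cells meet in empty set, a point, or a common edge, which is essentially what is needed; one just has to phrase the Euler-characteristic bookkeeping so that it is valid even when the quotient identifications make the complex non-simplicial, and note that such identifications only help (they can merge edges or faces, which does not violate $3F \le 2E$) and do not affect the equality case, since a triangulation by Delaunay triangles on the surface still satisfies $2E = 3F$.
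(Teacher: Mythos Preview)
Your argument is correct and is essentially the paper's proof: both use Euler's formula $V-E+F=2-2g$ with $V=v$ together with the $3F\le 2E$ relation coming from the fact that each $2$-cell has at least three sides. The only cosmetic difference is that the paper phrases it as ``first subdivide each Delaunay polygon into triangles (keeping the same vertex set), compute $E=6g-6+3v$ and $F=4g-4+2v$ exactly for this triangulation, and note that subdividing can only increase $E$ and $F$,'' whereas you use the inequality $3F\le 2E$ directly; the content is the same, and your equality analysis matches the paper's.
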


\begin{proof}
By dividing the Delaunay 2-cells (convex polygons) into triangles,  we obtain a triangulation with 
$v$ vertices. By Euler's formula and the fact that there are 3 oriented edges for each triangle, 
we find that each triangulation has $6g-6 +3v$ edges and $4g-4+2 \cdot v$ triangles.
\end{proof}


\section{Systoles of 1-forms in $\Hcal(2g-2)$}

In this section, we consider holomorphic 1-forms with a single zero. In the first part of the section we give the optimal bound on the number of homotopy classes of systoles of such surfaces as well as the optimal bound for the hyperelliptic surfaces with one zero. 
In the second part, we provide the optimal estimate on the systolic ratio of such surfaces.

\subsection{Bounds on the number of systoles}

\begin{prop} \label{thm:2g-2}
If $\omega$ is a holomorphic 1-form on $X$ that has exactly one zero,
then $(X, d_{\omega})$ has at most $6g-3$ homotopy classes of systoles.
\end{prop}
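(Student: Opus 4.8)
The plan is to reduce each homotopy class of systoles to a single saddle connection and then count saddle connections using the Delaunay machinery of Section~\ref{subsec:delaunay}. First I would observe that since $\omega$ has exactly one zero, call it $z_0$, Proposition~\ref{prop:passes-thru-zero} tells us every homotopy class of systoles contains a geodesic representative passing through $z_0$. Because there is only one zero, such a geodesic loop based at $z_0$ is (as a geodesic arc) a \emph{saddle connection} from $z_0$ to itself. Moreover a systolic saddle connection must be non-null-homotopic and has length $\sys(X)$, which is exactly the length $\ell(\alpha)$ of the shortest non-null-homotopic arc with both endpoints in $Z_\omega = \{z_0\}$ (one should check this equality: any such shortest arc is non-contractible, hence its length is $\geq \sys(X)$ by definition of systolic length; conversely a systolic geodesic through $z_0$ is such an arc, giving $\leq \sys(X)$). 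So every systolic homotopy class is represented by a shortest non-null-homotopic arc with endpoints in $Z_\omega$, and by Proposition~\ref{prop:shortDelaunayedge} each such arc is a Delaunay edge.

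Next I would bound the number of such Delaunay edges. By Proposition~\ref{prop:Delaunay-count} with $v=1$, the Delaunay decomposition of $X$ has at most $6g-6+3 = 6g-3$ edges. This already gives the bound $6g-3$ on the number of \emph{systolic Delaunay edges}, hence on the number of homotopy classes of systoles --- \emph{provided} distinct homotopy classes give distinct Delaunay edges. This injectivity needs a short argument: two homotopic simple geodesic loops through $z_0$ that are both systolic are, as Delaunay edges, arcs with the same length and the same endpoint $z_0$; I would argue (using that the universal cover is CAT(0), as in the proof of Proposition~\ref{prop:maxcyl}, or directly that a Delaunay edge has a unique geodesic representative rel endpoints in its homotopy class because the developing image of the relevant lift is convex) that they must coincide. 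One subtlety to address: a systolic homotopy class could \emph{a priori} also have a representative that is a geodesic loop not passing through $z_0$ lying in a maximal cylinder; but then the boundary of that cylinder contains an indirect geodesic of the same length through $z_0$ (Proposition~\ref{prop:passes-thru-zero} and the cylinder discussion), which is again a systolic Delaunay edge in that class, so no class is lost.

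The main obstacle I expect is the counting bookkeeping around \emph{which} Delaunay edges can actually be systolic and confirming the injection homotopy-class $\hookrightarrow$ Delaunay-edges is clean --- in particular making sure we do not inadvertently assume each $2$-cell is a triangle (equality case of Proposition~\ref{prop:Delaunay-count}) when deriving the bound, since the bound $6g-3$ on edges holds in all cases, not just the equilateral one. A secondary point is that a single Delaunay edge is one homotopy class of \emph{arc}, and I should confirm that a systolic \emph{loop} and its reverse orientation count as one homotopy class of systole, matching the convention of the statement; since an unoriented arc corresponds to an unoriented edge this is automatic. Putting these together: the number of homotopy classes of systoles is at most the number of Delaunay edges, which is at most $6g-3$, completing the proof.
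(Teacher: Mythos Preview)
Your proposal is correct and follows essentially the same three-step route as the paper: use Proposition~\ref{prop:passes-thru-zero} to find a representative through the unique zero, invoke Proposition~\ref{prop:shortDelaunayedge} to identify it as a Delaunay edge, and apply Proposition~\ref{prop:Delaunay-count} with $v=1$ to get the bound $6g-3$. The paper's proof is just these three sentences; your additional checks (that $\sys(X)$ equals the shortest saddle-connection length, handling cylinder representatives, orientation conventions) are all fine but not strictly needed, and your injectivity worry is overblown---a single Delaunay edge, regarded as a loop based at $z_0$, lies in exactly one free homotopy class, so distinct classes automatically yield distinct edges without any CAT(0) argument.
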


\begin{proof}
By Proposition \ref{prop:passes-thru-zero}, each homotopy class of systoles
contains a representative that passes through the zero. Proposition
 \ref{prop:shortDelaunayedge} implies that each such systole is a 
Delaunay edge. By Proposition \ref{prop:Delaunay-count}, there are 
at most $6g-3$ Delaunay edges and hence at most $6g-3$ homotopy 
classes of systoles. 
\end{proof}

The bound in Proposition \ref{thm:2g-2} is sharp if the genus $g$ of $X$ 
is at least 3. For example, if $g=3,4,5$, then consider the 
surfaces described in Figures \ref{fig:Marston-example}, 
\ref{fig:Marston-example2},
and \ref{fig:Marston-example3}. 

\begin{figure}[h]
\leavevmode \SetLabels
\L(.227*.51) $a$\\%
\L(.775*.51) $a$\\%
\L(.25*-0.1) $b$\\%
\L(.52*1.00) $b$\\%
\L(.36*-0.1) $c$\\%
\L(.73*1.00) $c$\\%
\L(.46*-.1) $d$\\%
\L(.63*1.00) $d$\\%
\L(.565*-.1) $e$\\%
\L(.43*1.00) $e$\\%
\L(.68*-.1) $f$\\%
\L(.32*1.00) $f$\\%
\endSetLabels
\begin{center}
\AffixLabels{\centerline{\epsfig{file=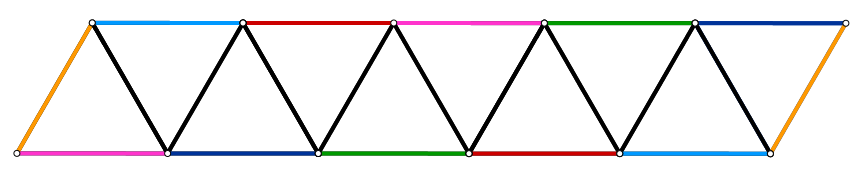,width=9cm,angle=0}}}
\caption{Genus 3 example that saturates bound in Proposition \ref{thm:2g-2}.
Glue the edges of the polygon according to the labels. Each edge is a systole, 
the 1-form $\omega$ has exactly one zero, and no two Delaunay
edges are homotopic. \label{fig:Marston-example} }
\vspace{-30pt}
\end{center}

\end{figure}

\begin{figure}[h]
\leavevmode \SetLabels
\L(.13*.51) $a$\\%
\L(.865*.51) $a$\\%
\L(.165*-0.02) $b$\\%
\L(.82*.95) $b$\\%
\L(.27*-0.02) $c$\\%
\L(.62*.95) $c$\\%
\L(.36*-.02) $d$\\%
\L(.22*.95) $d$\\%
\L(.46*-.02) $e$\\%
\L(.715*.95) $e$\\%
\L(.56*-.02) $f$\\%
\L(.525*.95) $f$\\%
\L(.67*-0.02) $g$\\%
\L(.42*.95) $g$\\%
\L(.76*-0.02) $h$\\%
\L(.32*.95) $h$\\%
\endSetLabels
\begin{center}
\AffixLabels{\centerline{\epsfig{file=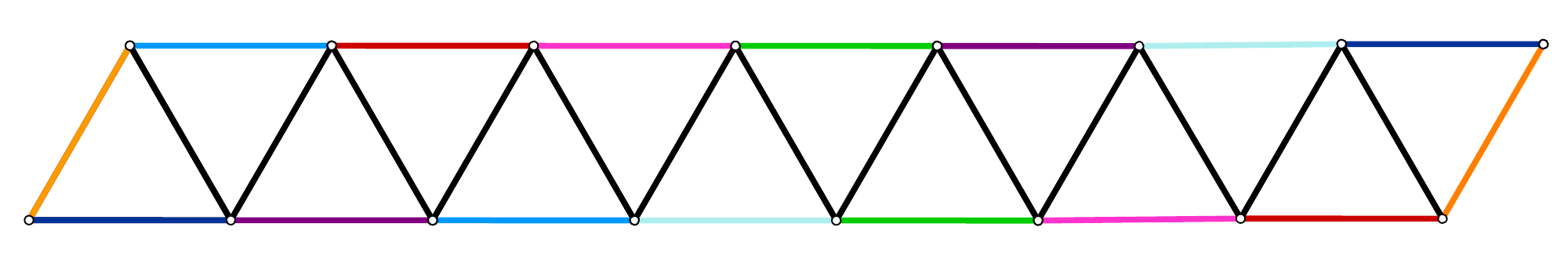,width=11.8cm,angle=0}}}
\caption{Genus 4 example that saturates bound in Proposition \ref{thm:2g-2}.
\label{fig:Marston-example2} }\vspace{-20pt}
\end{center}

\end{figure}

\begin{figure}[h]
\leavevmode \SetLabels
\L(.08*.5) $a$\\%
\L(.91*.5) $a$\\%
\L(.12*-0.2) $b$\\%
\L(.515*1.05) $b$\\%
\L(.21*-0.2) $c$\\%
\L(.87*1.05) $c$\\%
\L(.29*-.2) $d$\\%
\L(.605*1.05) $d$\\%
\L(.38*-.2) $e$\\%
\L(.16*1.05) $e$\\%
\L(.47*-.2) $f$\\%
\L(.78*1.05) $f$\\%
\L(.56*-0.2) $g$\\%
\L(.425*1.05) $g$\\%
\L(.65*-0.2) $h$\\%
\L(.34*1.05) $h$\\%
\L(.74*-0.2) $i$\\%
\L(.25*1.05) $i$\\%
\L(.83*-0.2) $j$\\%
\L(.695*1.05) $j$\\%
\endSetLabels
\begin{center}
\AffixLabels{\centerline{\epsfig{file =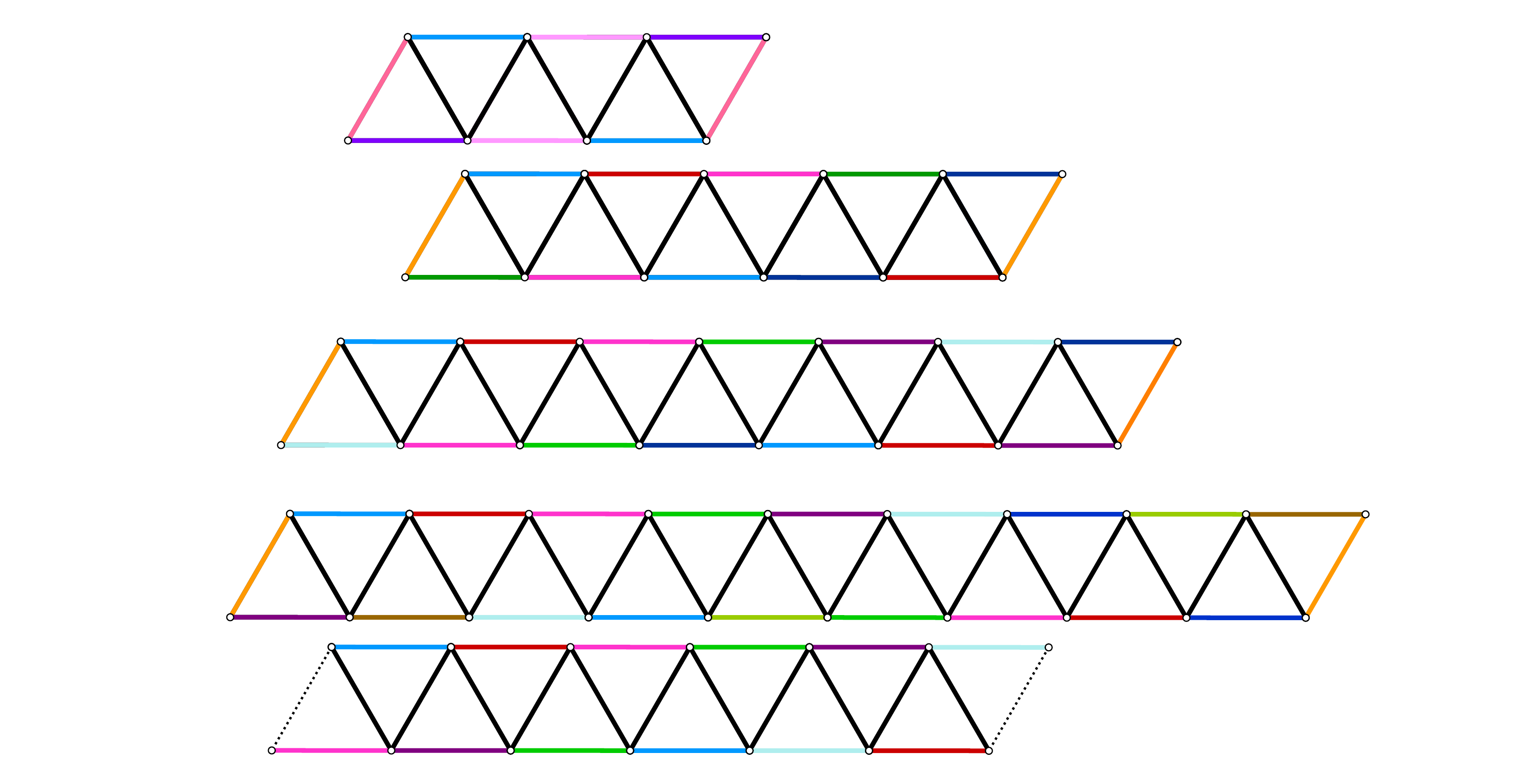,width=12.9cm,angle=0}}}
\caption{Genus 5 example that saturates bound in Proposition \ref{thm:2g-2}.
\label{fig:Marston-example3} }\vspace{-20pt}
\end{center}

\end{figure}

More generally, given a holomorphic 1-form $\omega_g$ on a surface $X_g$ of genus $g$ 
with one zero that achieves the bound $6g-3$, one can construct a holomorphic 
1-form $\omega_{g+3}$ with one zero on a surface $X_{g+3}$ of genus $g+3$ 
that achieves the bound $6(g+3)-3$. Indeed, remove the interior of a
Delaunay edge from $(X_g, d_{\omega_g})$ to obtain a surface $X'_g$ with 
`figure eight' boundary consisting of two segments $F_-$, $F_+$ each corresponding
to the Delaunay edge. Let $(Y_2,dz)$ be the genus two surface with 
two boundary components $G_-$, $G_+$ that is described in Figure \ref{fig:Marston-inductive}.
By gluing $F_{\pm}$ to $G_{\pm}$, we obtain the desired  $(X_{g+3},\omega_{g+3})$.

\begin{figure}[h]
\leavevmode \SetLabels
\L(.22*-0.2) $b$\\%
\L(.47*1.05) $b$\\%
\L(.32*-0.2) $c$\\%
\L(.67*1.05) $c$\\%
\L(.42*-.2) $d$\\%
\L(.57*1.05) $d$\\%
\L(.52*-.2) $e$\\%
\L(.275*1.05) $e$\\%
\L(.62*-.2) $f$\\%
\L(.77*1.05) $f$\\%
\L(.72*-0.2) $g$\\%
\L(.37*1.05) $g$\\%
\endSetLabels
\begin{center}
\AffixLabels{\centerline{\epsfig{file =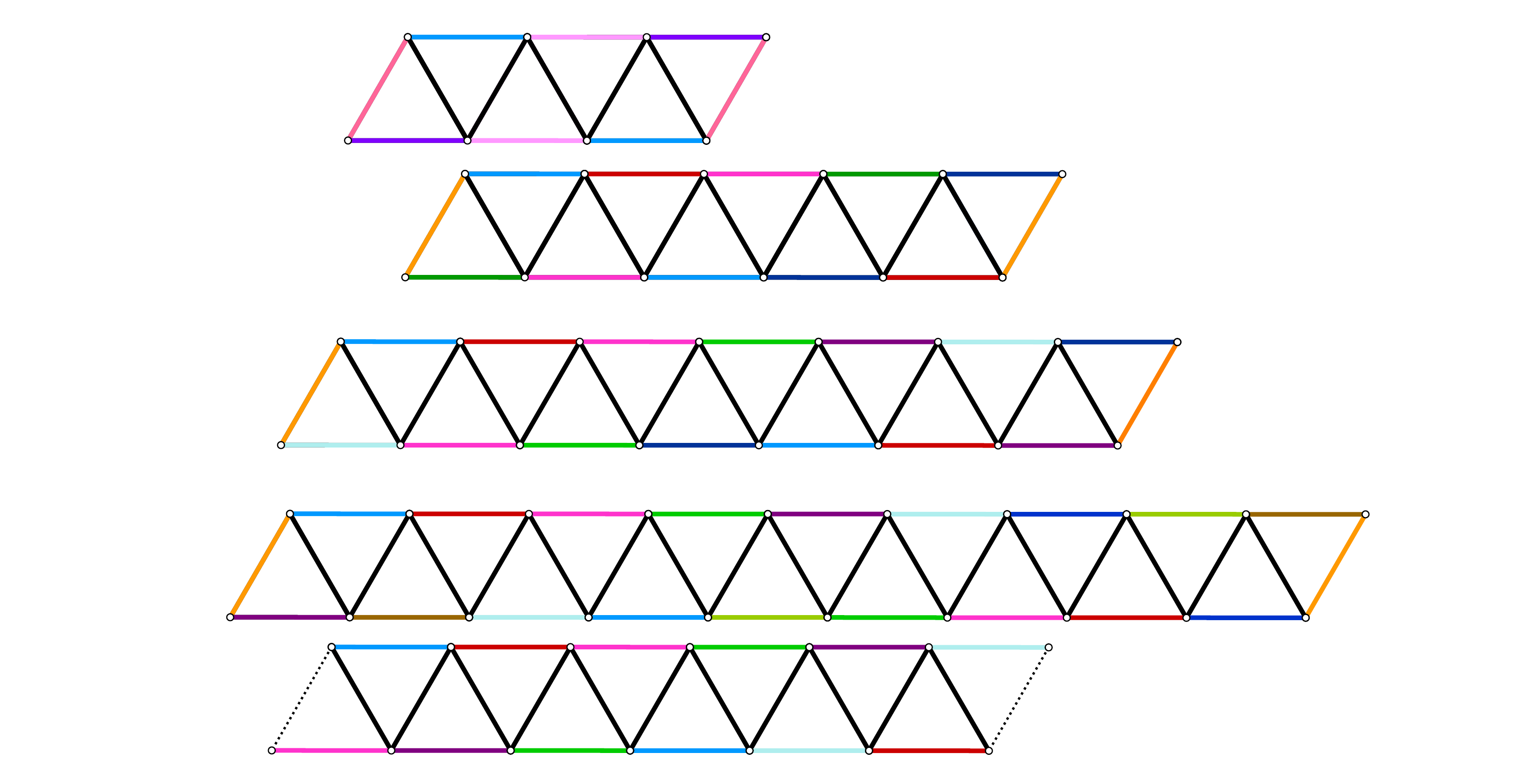,width=10cm,angle=0}}}
\caption{Glue solid edges of the polygon that have the same label to
obtain the Delaunay triangulation associated a holomorphic 1-form 
on a surface of genus two having two boundary components.
\label{fig:Marston-inductive} 
 }\vspace{-20pt}
\end{center}

\end{figure}
\begin{remk}\label{rmk:condor}
The problem of constructing surfaces that saturate the bound in 
Proposition \ref{thm:2g-2} is equivalent to the problem of constructing
two fixed-point free elements $\sigma$, $\tau$ in the 
symmetric group $S_{g-1}= {\rm Sym}(\{1, \ldots, 2g-1\})$ 
such that $\sigma \cdot \tau$ has no fixed points and the 
commutator $[\sigma, \tau]$ is a $(2g-1)$-cycle. Indeed, let $P_1, \ldots P_g$
be $2g-1$ disjoint copies of the convex hull of $\{0,1, e^{\pi i/3}, 1+ e^{\pi i/3}\}$.
Given $\sigma, \tau \in S_{2g-1}$, glue the left side of 
$P_i$ to the right side of $P_{\sigma(i)}$ and the 
top side of $P_i$ to the bottom side of $P_{\tau(i)}$
to obtain a surface with a holomorphic 1-form $\omega$. 
If $[\sigma, \tau]$ is an $n$-cycle, then it follows that 
$\omega$ has one zero, and if $\sigma$, $\tau$, and $\sigma \cdot \tau$ 
have no fixed points, then it follows that $(X, d_{\omega})$ has 
no cylinder with girth equal to the systole. Thus, by
Proposition \ref{prop:maxcyl}, no two systolic edges are homotopic.

Conversely, suppose that a holomorphic 1-form 
surface saturates the bound, then the necessarily equilateral 
Delaunay triangles can be paired to form parallelograms as above
that are glued according to permutations $\sigma$ and $\tau$.
One verifies that $\sigma$ and $\tau$ satisfy the desired
properties.

The surface constructed in Figure \ref{fig:Marston-example} corresponds to 
the pair $\sigma =(12345)$, $\tau=(15243)$, the surface constructed 
in Figure \ref{fig:Marston-example2} corresponds to the pair $\sigma=(1234567)$,
$\tau=(1364527)$, and the surface in Figure \ref{fig:Marston-example3} 
corresponds to $\sigma=(123456789)$,
$\tau=(146379285)$.
We thank Marston Condor for finding these examples for us. 
\end{remk}

If the genus of the surface is two, then one can show that the maximum number of homotopy classes of systoles is $7=6g-5$. More generally, the following is true. 
\begin{thm}\label{thm:hyperelliptic-2g-2}
Let $\omega$ be a holomorphic 1-form on a surface with a 
hyperelliptic involution $\tau$. If $\omega$ has exactly one zero,
then $(X, d_{\omega})$ has at most $6g-5$ homotopy classes of systoles.
Moreover, $(X, d_{\omega})$ has exactly $6g-5$ homotopy classes of systoles
if and only if each Delaunay edge is a systole and there exist exactly 
four Delaunay 2-cells each of which have two edges that are preserved
by the hyperelliptic involution. 
\end{thm}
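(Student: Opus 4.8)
The plan is to combine the Delaunay machinery of \S\ref{subsec:delaunay} with a careful count of how the hyperelliptic involution $\tau$ acts on the Delaunay decomposition, using the single zero to pin down the combinatorics. First I would note, as in the proof of Proposition~\ref{thm:2g-2}, that every homotopy class of systoles is represented by a Delaunay edge; so it suffices to bound the number of Delaunay edges which are systoles, and to characterise when this number reaches $6g-5$. By Proposition~\ref{prop:Delaunay-count} there are at most $6g-3$ edges, with equality exactly when every $2$-cell is a triangle; so I must show two edges are ``wasted'' in the hyperelliptic setting. The key structural input is that $\tau$ has a unique fixed point at the single zero $z$ of $\omega$ (in genus $2$ it fixes $6$ Weierstrass points, but only $z$ is a zero), and $\tau$ acts on the translation surface as $-1$ on holonomy, so it maps Delaunay edges to Delaunay edges and each maximal cylinder to itself (reversing it). Hence $\tau$ permutes the set of Delaunay edges, fixing the edges through $z$ setwise.

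The main step is a local analysis at the zero $z$: the Delaunay edges emanating from $z$ are cyclically ordered around the cone point of angle $2\pi(2g-1)$, and $\tau$ acts on this cyclic set of prongs as a rotation by half the total angle (an involution on the link with no fixed point, since $z$ is isolated among the zeros). An edge $e$ from $z$ back to $z$ corresponds to a pair of prongs; $\tau$ either swaps the two prongs of $e$ (so $\tau(e)=e$, a ``$\tau$-invariant edge'') or sends $e$ to a genuinely different edge $\tau(e)$. I would argue that a $\tau$-invariant edge cannot be a systole unless it lies on the boundary of a maximal cylinder in a specific way, and more importantly that if \emph{all} $6g-3$ edges are present and all are systoles, then the triangulation would force a configuration of $\tau$-invariant edges and triangles incompatible with $\tau$ being a fixed-point-free rotation on the link of $z$ — this is where the bound drops by $2$. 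The ``only if'' direction for equality: if $(X,d_\omega)$ has exactly $6g-5$ systolic classes, I would show every Delaunay edge must be a systole (otherwise one loses more than two), the decomposition is a triangulation, and exactly the two ``missing'' systolic edges are accounted for by exactly four triangles each having a pair of $\tau$-swapped edges — these four triangles are the orbit of two triangles under $\tau$ meeting along $\tau$-invariant configurations. Conversely, given such a configuration, a direct count of edges modulo the pairing shows the systolic edges number $6g-5$.

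The hard part will be the local combinatorial analysis at $z$ and the bookkeeping that converts ``$\tau$ acts without fixed points on the prongs'' into the loss of exactly two systolic edges, rather than one or more than two. In particular, I expect the delicate point to be ruling out the possibility that a $\tau$-invariant edge is simultaneously a systole \emph{and} that the triangulation still contains all $6g-3$ edges; one must use that a $\tau$-invariant systolic edge $e$ through $z$ has $z$ as an interior-like point forcing the two triangles adjacent to $e$ to be $\tau$-related, and then an Euler-characteristic / prong count around $z$ shows the number of $\tau$-invariant triangles, hence the deficit, is exactly $2$. Identifying ``exactly four Delaunay $2$-cells with two $\tau$-preserved edges'' as the precise equality condition amounts to checking that this is the unique way to realise the deficit with every edge a systole; I would verify this by exhibiting the quotient $X/\langle\tau\rangle$ as a sphere with cone points and reading off the admissible triangulated configurations directly.
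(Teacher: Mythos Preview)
Your proposal misidentifies the mechanism by which the bound drops from $6g-3$ to $6g-5$. The deficit does \emph{not} come from $\tau$-invariant edges failing to be systoles (in the extremal case every Delaunay edge, $\tau$-invariant or not, is a systole), nor from a local prong analysis at $z$. It comes from \emph{pairs of homotopic systolic edges}: two distinct systolic Delaunay edges can bound a flat cylinder and thus represent the same homotopy class. The paper's argument is a global fixed-point count. Since $\tau$ has $2g+2$ fixed points, one of which is $z_0$, and since no equilateral triangle has an orientation-preserving involutive isometry, the remaining $2g+1$ Weierstrass points must lie on Delaunay edges (as midpoints). A pigeonhole argument then shows that among the $4g-2$ equilateral triangles there are at least four that each have \emph{two} $\tau$-fixed edges; for such a triangle $T$, the union $T\cup\tau(T)$ is a cylinder whose two boundary circles are the ``third'' edges of $T$ and $\tau(T)$, giving a pair of homotopic systoles. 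Two such cylinders yield two homotopic pairs, hence at most $6g-5$ classes. Your prong analysis at $z$ does not produce this Weierstrass-point count, and your description of the four special triangles as having ``$\tau$-swapped'' edges is the opposite of what is required: they have two $\tau$-\emph{fixed} edges.

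There is a second gap: you only treat the case where all $6g-3$ Delaunay edges are systolic. For the bound and for the ``only if'' direction of equality you must also handle the cases where exactly $6g-4$ or $6g-5$ Delaunay edges are systolic. In these cases the non-triangular Delaunay $2$-cells (a rhombus, a hexagon, or two rhombi) interact with the Weierstrass-point count in a more delicate way, and one must still locate enough triangles with two fixed edges to force at least one (respectively two) homotopic pairs. Without this analysis you cannot conclude that $6g-5$ homotopy classes forces every Delaunay edge to be a systole.
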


For each $g\geq2$, the bound given in Theorem \ref{thm:hyperelliptic-2g-2}
is achieved by, for example, the surface described in Figure \ref{fig:hyperelliptic-saturate}.
\begin{figure}[h]
\leavevmode \SetLabels
\L(.32*-0.14) $b$\\%
\L(.672*1.04) $b$\\%
\L(.4*-0.14) $c$\\%
\L(.585*1.04) $c$\\%
\L(.585*-.14) $d$\\%
\L(.4*1.04) $d$\\%
\L(.672*-.14) $e$\\%
\L(.32*1.04) $e$\\%
\L(.714*.24) $f$\\%
\L(.273*.63) $f$\\%
\L(.71*.63) $g$\\%
\L(.275*.25) $g$\\%
\endSetLabels
\begin{center}
\AffixLabels{\centerline{\epsfig{file =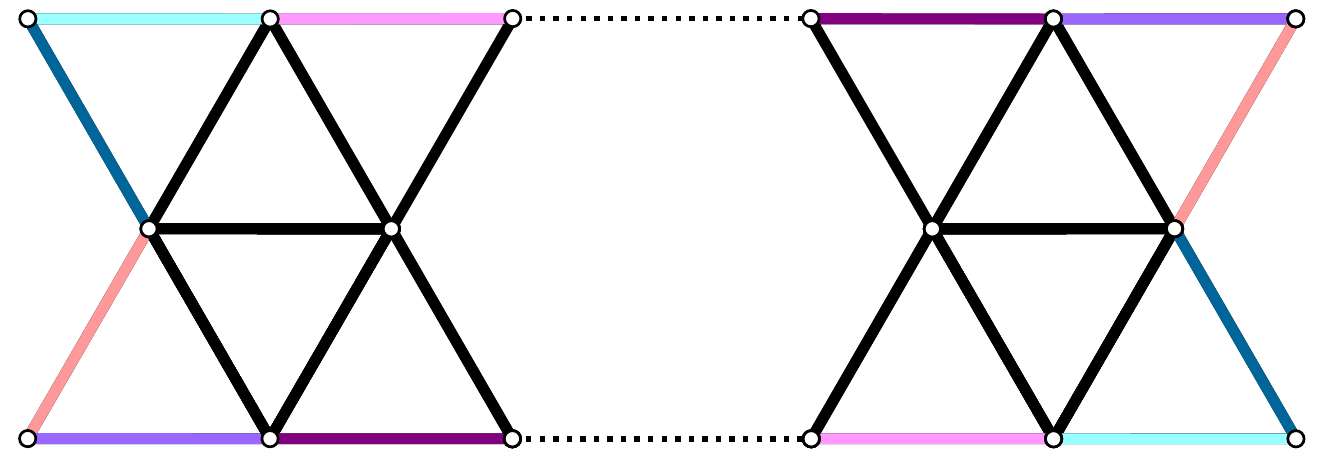,width=7cm,angle=0}}}
\caption{Glue edges of the polygon that have the same label to
obtain the Delaunay triangulation associated to a holomorphic 1-form 
on a surface of genus g. The surface is hyperelliptic, 
the 1-form $\omega$ has exactly one zero, and 
there are exactly $6g-5$ homotopy classes of systoles. 
\label{fig:hyperelliptic-saturate} 
 }\vspace{-20pt}
\end{center}
\end{figure}

\begin{proof}
Each homotopy class of systole is represented by at least one systolic
Delaunay edge. Since $\omega$ has exactly one zero, $z_0$,
the number of Delaunay edges is at most $6g-3$. Thus, we wish to show
that if there are $6g-3$ or $6g-4$ systolic Delaunay edges,
then there exist at least two homotopic pairs of systolic edges, and that, if there are
$6g-5$ systolic edges, then there is at least one pair of homotopic edges. 

\underline{$6g-3$ systolic edges:} \hspace{.2cm}
Suppose that there are exactly $6g-3$ systolic Delaunay edges.
Then each Delaunay 2-cell is an equilateral triangle and by Proposition \ref{prop:Delaunay-count}
there are $4g-2$ such cells. 
Since $\tau$ is an isometry, it preserves the Delaunay partition.
In particular, since $z_0$ is the unique $0$-cell, we have $\tau(z_0)=z_0$,
and since an equilateral triangle has no (orientation preserving) involutive isometry, the involution $\tau$ has no fixed points on the interior of each $2$-cell. 
Thus, the remaining $2g+1$ fixed points of $\tau$ lie on $1$-cells.
In particular, $\tau$ fixes exactly $2g+1$ Delaunay edges.

Suppose that $T$ is a 2-cell with two fixed edges. Then $T \cup \tau(T)$ 
is a cylinder whose boundary components are the `third' edges of $T$ and $\tau(T)$,
and, in particular, since the genus of $X$ is at least two, 
these `third' edges are not fixed by $\tau$.
Thus, a 2-cell has either zero, one, or two fixed edges.
Note that the number of 2-cells that have two fixed edges is even. 

We claim that there exist at least four 2-cells that each have two fixed edges. 
Indeed, if, on the contrary, there are at most two such 2-cells, 
then there are at least $4g-4$ remaining $2$-cells that each have at most one 
fixed edge. Thus, there are at most $2g-2$ fixed Delaunay edges associated to these 2-cells,
and at most $2$ edges associated to the $2$-cells that have two fixed edges. 
But, there are $2g+1 > (2g-2)+2$ fixed edges, and so we have a contradiction. 

The four 2-cells form two cylinders each bounded by two systolic edges. 
Thus, there are at most $6g-5$ homotopy classes of systoles. 

If there are exactly $6g-5$ homotopy classes of cylinders, then 
there are two maximal cylinders each bounded by two systolic edges. 
The integral of $\omega$ over the middle curve of each cylinder is nonzero,
and hence the middle curve is not null-homologous. The induced action 
of a hyperelliptic involution on $H_1(X)$ is the antipodal map, and so 
$\tau$ preserves each cylinder and has exactly two fixed points
on the interior of each cylinder. It follows that there are exactly four
Delaunay 2-cells each having two fixed edges.

\underline{$6g-4$ systolic edges:} \hspace{.2cm}
Suppose that there are exactly $6g-4$ systolic Delaunay edges.
It follows that exactly $4g-4$ Delaunay 2-cells are equilateral triangles.
The complement, $K$, of the union of these equilateral triangles is
(the interior of) a rhombus. 

Since $\tau$ is an isometry, $\tau$ preserves $K$, and hence 
the center $c$ of the rhombus is a fixed point of $\tau$. 
The other Delaunay 2-cells are equilateral triangles and hence
do not contain fixed points. Therefore, since $\tau$ has 
exactly $2g-2$ fixed points and $\tau(z_0)=z_0$, 
exactly $2g$ systolic edges are fixed by $\tau$.

If an edge $e$ in $\partial K$ is fixed by $\tau$, then $e$ is equal to the opposing edge
and in particular  $K \cup e$ is a cylinder. 
Indeed, if $e$ were fixed by $\tau$, then the
segment in $K$ joining the midpoint of $e$ to $c$ would 
be `rotated' by $\tau$ to a segment joining $c$ to the midpoint of the
edge $e'$ opposite to $e$. Hence the midpoint of $e$ would equal the midpoint of 
$e'$, and thus $e=e'$. 

Since $X$ is connected and of genus at least two, 
not all four edges of $\partial K$ can be fixed by $\tau$.
Thus either $\oK$ is a rhombus with no fixed edges or a cylinder with no fixed
boundary edges. 

Suppose that $\oK$ is a rhombus. Among the remaining $4g-4$ two-cells---equilateral 
triangles---there are exactly $2g$ fixed points. Hence there exist equilateral
triangles that have at least two fixed edges. If there were $4g-6$ equilateral triangles 
that each had at most one fixed edge, then there would be only 
$2g-3+2=2g-1$ fixed points among the $4g-4$ equilateral triangles. 
It follows that there are at least four equilateral triangles that each have 
two fixed edges, and thus there exist two distinct maximal cylinders bounded 
by systoles. 

Suppose that $\oK$ is a cylinder. In this case, neither of 
the two equilateral triangles that share edges with $\oK$ 
can have two fixed edges. Indeed, using an argument as above with a 
segment joining the center of the rhombus, we would see that the edges
would be identified in such a way to form a torus. 

Consider the two equilateral triangles $T_+$, $T_-$ that have an edge in $\oK$.
If an edge $e$ of $T_{\pm}$ is fixed then $\tau$, then using the symmetry about $c$, 
one shows as above that $e$ is identified with an edge of $T_{\mp}$. 
Because the $X$ is connected and of genus at least two, $T_{\mp}$ has 
at most one edge fixed by $\tau$. It follows that among
the remaining $4g-6$ triangles there are at least $2g+2 - 1 -3=2g-2$ fixed points.
It follows that there exist at least one equilateral that has two 
fixed edges, and hence there exists another maximal cylinder bounded by systoles. 

In either case, we have two maximal cylinders bounded by systoles, 
and therefore there are at most $6g-6$ 
homotopy classes of systoles.

\underline{$6g-5$ systolic edges:} \hspace{.2cm}
Suppose that there are exactly $6g-5$ systolic edges. 
Then there are $4g-6$ Delaunay 2-cells that are equilateral triangles. 
The complement, $K$, of the union of these equilateral triangles 
consists of either an equilateral hexagon or two disjoint rhombi.

Suppose that $K$ is an equilateral hexagon.
Then since $\tau$ preserves the Delaunay partition, 
we have $\tau(K)=K$. Hence $K$ contains
exactly one fixed point $c$ and $K$ is convex. Thus, arguing as above,
we find that if a boundary edge of $K$ is fixed by $\tau$, then 
the edge equals an opposite edge. Since $X$ 
is connected with genus at least two, all six edges
can not be identified, and hence there are at most $3$ fixed points
in $\overline{K}$. 

We claim that at least one pair of equilateral triangles each have 
exactly two fixed edges. If not, then each of the $4g-6$ equilateral
triangles contains at most one fixed edge. Thus, there are at most 
$2g-3$ such edges, and hence $(2g-3)+3+1=2g+1$ fixed points in total. 
But the total number of fixed points is $2g+2$. 
Thus, we have a pair of equilateral triangles that share a pair of fixed edges.
The union is a cylinder bounded by two systolic edges,
and so we have at most $6g-6$ homotopy classes of systoles in this case.

Finally suppose that $K$ is the disjoint union of two rhombi $R_+$ and $R_-$.
Since $\tau$ preserves the Delaunay partition, either $\tau(R_{\pm})=R_{\pm}$
or $\tau(R_{\pm})=R_{\mp}$. 

If $\tau(R_{\pm})=R_{\pm}$, then each rhombus contains a fixed point. 
If an edge of $R_{\pm}$ is fixed, then 
$\overline{R_{\pm}}$ is a cylinder bounded by systolic edges and so there
are at most $6g-6$ homotopy classes of systoles. 
If neither rhombus has boundary edges fixed by $\tau$, 
then $\overline{K}$ contains exactly two fixed points.
If there is not a pair of equilateral triangles that share fixed boundary edges,
then each of the $4g-6$ equilateral triangles would have at most one fixed edge,
and so there would be at most $2g-3+2+1=2g$ fixed points, a contradiction.
Hence we have a systolic cylinder and at most $6g-6$ homotopy classes 
of systoles. 

If $\tau(R_{\pm})=R_{\mp}$, then the rhombi do not contain fixed points.
If an edge in $\partial R_{\pm}$ is fixed by $\tau$, then $R_{\pm}$
shares this edge with $R_{\mp}$. It follows that there are at most three 
fixed points in $\overline{K}$, and one may argue as in the case of the hexagon, 
to find that there are at most $6g-6$ homotopy classes of systoles. 

If no edge in $\partial R_{\pm}$ is fixed by $\tau$, then among 
the $4g-6$ equilateral triangles there are $2g+1$ fixed points. 
It follows that there is an equlateral triangle that has two edges
fixed by $\tau$, and hence there is a maximal cylinder bounded by systoles.  
\end{proof}

Since each genus two surface is hyperelliptic, we have the following
corollary.

\begin{coro}
Let $X$ be a surface of genus two. If $\omega$ is a holomorphic 1-form on $X$ that has exactly one zero, then the number of homotopy classes of systoles of $(X,d_{\omega})$
is at most 7.
\end{coro}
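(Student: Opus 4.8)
The corollary follows immediately from Theorem~\ref{thm:hyperelliptic-2g-2} applied with $g=2$, so the only ``proof'' to give is the observation that makes the theorem applicable. The plan is as follows.

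First I would recall that every compact Riemann surface of genus two is hyperelliptic: the canonical map of such a surface is a degree-two morphism onto $\mathbb{P}^1$, and the corresponding deck transformation is the hyperelliptic involution $\tau$. Hence the hypothesis of Theorem~\ref{thm:hyperelliptic-2g-2} is automatically satisfied for any $(X,\omega)$ with $X$ of genus two. Second, I would simply invoke Theorem~\ref{thm:hyperelliptic-2g-2}: since $\omega$ has exactly one zero and $X$ carries a hyperelliptic involution, $(X,d_\omega)$ has at most $6g-5$ homotopy classes of systoles, and substituting $g=2$ gives $6\cdot 2-5 = 7$.

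There is essentially no obstacle here — the content is entirely in Theorem~\ref{thm:hyperelliptic-2g-2}, and the corollary is just the specialization $g=2$ together with the classical fact that genus two forces hyperellipticity. The one point worth stating explicitly, to make the logical dependence transparent, is \emph{why} genus two surfaces are hyperelliptic, so that the reader does not have to supply it. I would keep the proof to one or two sentences.

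\begin{proof}
Every closed Riemann surface of genus two is hyperelliptic: its canonical map is a two-to-one branched cover of $\mathbb{P}^1$, whose nontrivial deck transformation is the hyperelliptic involution $\tau$. Thus the hypotheses of Theorem~\ref{thm:hyperelliptic-2g-2} hold for $X$, and applying that theorem with $g=2$ shows that $(X, d_\omega)$ has at most $6\cdot 2-5 = 7$ homotopy classes of systoles.
\end{proof}
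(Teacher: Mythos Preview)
Your proposal is correct and matches the paper's approach exactly: the paper simply notes that every genus two surface is hyperelliptic and deduces the corollary from Theorem~\ref{thm:hyperelliptic-2g-2} with $g=2$, without even writing out a formal proof. Your version just makes the reasoning a bit more explicit.
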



\subsection{Lengths of systoles}

Although our main concern is the number of systoles, we observe in this section that it is quite straightforward to find a sharp upper bound on the length of systoles of translation surfaces provided they have a single cone point singularity. One of the ingredients 
is the Delaunay triangulation described in \S \ref{subsec:delaunay}. The other ingredient is a result due to Fejes T\'oth which we state in the form of the following lemma.

\begin{lem}\label{lem:triangle}
Let $T$ be a Euclidean triangle embedded in the plane and let $r$ be the 
maximal positive real number so that the open balls of radius $r$ around 
the three vertices are disjoint. Then
$$
r^2~ \leq~ \frac{{\rm Area}(T)}{\sqrt{3}}
$$
with equality if and only if $T$ is equilateral.
\end{lem}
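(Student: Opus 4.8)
The plan is to parametrize the triangle $T$ by its three angles $A$, $B$, $C$ (with $A+B+C=\pi$) and the length of one side, and to show that, after scaling so that $r=1$ (i.e.\ so that the closest pair of vertices is at distance exactly $2$), the area is minimized exactly when $T$ is equilateral. First I would note that the packing radius $r$ is half the length of the shortest side: if the sides have lengths $a\le b\le c$, then $r=a/2$, since the two closest vertices are the endpoints of the shortest side and making $r$ any larger would force the two corresponding balls to overlap. So after rescaling we may assume $a=2$ and $b,c\ge 2$, and the claim becomes $\area(T)\ge\sqrt{3}$ with equality iff $a=b=c=2$.

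Next I would express the area in a way that isolates the dependence on the free parameters. Writing $\area(T)=\tfrac12 a b \sin C = \tfrac12 ab\sin C$ where $C$ is the angle between the sides of length $a$ and $b$ is awkward because of the constraint $c\ge 2$; instead I would fix the shortest side $a=2$ lying on the $x$-axis with endpoints at $(\pm 1,0)$, and let the opposite vertex be $v$. The constraint is that $v$ lies at distance $\ge 2$ from each of $(\pm 1,0)$, i.e.\ outside the union of the two closed disks of radius $2$ centered at $(\pm1,0)$; the area is $\area(T)=\tfrac12\cdot 2\cdot|h|=|h|$ where $h$ is the height (the $y$-coordinate of $v$). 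So I must show that every point in the admissible region has $|h|\ge\sqrt3$, with equality only at the two points $(0,\pm\sqrt3)$. This is now elementary: on the boundary of the admissible region $v$ lies on one of the two circles, say $(v_1-1)^2+v_2^2=4$; minimizing $|v_2|$ subject to also having $(v_1+1)^2+v_2^2\ge 4$ forces $v_1\ge 0$ on that arc, and the lowest such point consistent with the other constraint is where the two circles meet, namely $v_1=0$, $v_2=\pm\sqrt3$. Any interior admissible point has strictly larger $|v_2|$ or else lies above this, and the configuration $v=(0,\sqrt3)$ is precisely the equilateral triangle with side $2$.

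I expect the only real subtlety to be bookkeeping about \emph{which} side is shortest and confirming that $r$ genuinely equals half the shortest side rather than being constrained by some larger-distance pair — but since for a triangle there are only three vertices and three pairwise distances, the minimum pairwise distance is realized by a genuine side, so $r$ is exactly half the shortest side length and there is nothing more to check. An alternative, cleaner route avoids coordinates entirely: normalize so $r=1$, so all pairwise distances are $\ge 2$; among all triangles with all sides $\ge 2$, the area is a continuous function on a closed region of shape space, and at an interior critical point the gradient conditions force all sides equal, while on the boundary (some side $=2$) one reduces to the two-variable computation above and checks that shrinking a second side toward $2$ decreases the area, eventually reaching the equilateral triangle of side $2$, which has area $\sqrt3$. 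Either way the inequality $r^2\le\area(T)/\sqrt3$ with equality iff $T$ is equilateral follows, since $\area(T)/r^2$ is scale-invariant and equals $\area$ of the normalized triangle.
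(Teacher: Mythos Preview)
Your argument has a real gap at the step ``every point in the admissible region has $|h|\ge\sqrt3$.'' After you normalize so that the shortest side has endpoints $(\pm1,0)$, the third vertex $v$ is constrained only to lie outside the two open disks of radius $2$ about $(\pm1,0)$. That region contains points with arbitrarily small height: for instance $v=(4,\tfrac1{10})$ has $|v-(1,0)|=\sqrt{9.01}>2$ and $|v-(-1,0)|=\sqrt{25.01}>2$, yet $|v_2|=\tfrac1{10}$. Your boundary minimization is where the slip occurs: on the arc $(v_1-1)^2+v_2^2=4$ with $v_1\ge 0$, the quantity $|v_2|$ is minimized at $v_1=3$ (where $v_2=0$), not at the intersection point $v_1=0$.

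In fact this gap cannot be repaired, because the statement as written is false. The triangle with vertices $(-1,0),(1,0),(4,\tfrac1{10})$ has shortest side $2$, hence $r=1$, but area $\tfrac1{10}$, so $r^2=1>\tfrac1{10\sqrt3}=\area(T)/\sqrt3$. The paper does not supply its own proof of the lemma; it attributes the result to Fejes~T\'oth and uses it only for Delaunay $2$-cells. The classical Fejes~T\'oth density argument needs the three radius-$r$ circular sectors at the vertices to lie inside $T$ (equivalently, $r$ does not exceed any altitude of $T$), so that their total area is exactly $\tfrac{\pi}{2}r^2$; this fails for sufficiently obtuse triangles. Some such extra hypothesis is implicit in the intended application, but it is absent from the bare statement you were asked to prove.
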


This can be stated differently in terms of ratios of areas. 
Consider the area $A_r$ of a triangle found at distance $r$ 
from the vertices of $T$ and so that the interior of the three sectors do not overlap. 
Then the ratio $A_r/T$ never exceeds that of the equilateral triangle 
with $r$ equal to half the length of a side.

\begin{thm}
If $(X,\omega) \in \Hcal(2g-2)$, then
\begin{equation} \label{eqn:ratio-estimate}
\frac{\sys^2(X)}{\area(X)} \leq \frac{4}{(4g-2)\cdot \sqrt{3}}
\end{equation}
with equality if and only if $X$ is tiled by equilateral triangles.
\end{thm}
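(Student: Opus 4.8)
\emph{Proof proposal.} The plan is to run Fejes T\'oth's disc-packing argument, as promised in the introduction, on the Delaunay triangulation of $(X,d_\omega)$. Write $z_0$ for the unique zero of $\omega$ and set $r:=\sys(X)/2$. Subdividing the convex Delaunay $2$-cells of \S\ref{subsec:delaunay} into Euclidean sub-triangles produces a geodesic triangulation $\tri$ of $X$ whose only vertex is $z_0$; exactly as in the proof of Proposition \ref{prop:Delaunay-count} (with $v=1$), Euler's formula forces the number of triangles to be \emph{precisely} $4g-2$, regardless of the combinatorics of the Delaunay cells. This is the reason to pass to a triangulation rather than apply Lemma \ref{lem:convex-polygon} to the Delaunay polygons directly: the number of Delaunay $2$-cells is only known to be $\le 4g-2$, so summing the polygon estimate over the cells would be strictly weaker as soon as some cell fails to be a triangle.

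Next I would bound the area of each triangle $T\in\tri$ from below in terms of $r$. Lift $T$ to a triangle $\tT$ sitting inside a single closed Delaunay cell of $\tX$; since the developing map restricts to an isometry of that cell onto a convex Euclidean polygon (\S\ref{subsec:delaunay}), $\dev(\tT)$ is an honest Euclidean triangle with $\area\bigl(\dev(\tT)\bigr)=\area(T)$. Its three vertices are distinct lifts of $z_0$, hence lie in a single orbit of the freely acting deck group. Given two of them, $\tv\ne\tw$, the $\dev|_{\tT}$-preimage of the straight segment $[\dev(\tv),\dev(\tw)]$ (which lies in the triangle $\dev(\tT)$) is a path in $\tX$ of length $|\dev(\tv)-\dev(\tw)|$ from $\tv$ to $\tw$; it projects to a loop at $z_0$ representing a non-trivial deck transformation, hence a non-contractible loop, so $|\dev(\tv)-\dev(\tw)|\ge\sys(X)=2r$. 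Therefore the open balls of radius $r$ about the vertices of $\dev(\tT)$ are pairwise disjoint, the quantity $\rho_T$ of Lemma \ref{lem:triangle} satisfies $\rho_T\ge r$, and Lemma \ref{lem:triangle} gives
\[
r^2\;\le\;\rho_T^2\;\le\;\frac{\area(T)}{\sqrt3}.
\]
Summing over the $4g-2$ triangles of $\tri$ and using $\sum_{T\in\tri}\area(T)=\area(X)$ yields $(4g-2)\,r^2\le\area(X)/\sqrt3$, which is the asserted inequality.

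For the equality statement, equality in the chain above forces, for every $T\in\tri$, both $\rho_T=r$ and $\rho_T^2=\area(T)/\sqrt3$; by the equality case of Lemma \ref{lem:triangle} the latter says $T$ is equilateral, and then $\rho_T$ is half its side, so the side length equals $\sys(X)$. Thus $X$ is tiled by equilateral triangles. Conversely, if $X$ is obtained by gluing equilateral triangles of side $a$ with all vertices at the zero, one checks this triangulation is the Delaunay one (each circumradius is $a/\sqrt3<a$, the distance to every other vertex), so by Proposition \ref{prop:shortDelaunayedge} no non-null-homotopic arc between zeros is shorter than $a$; with Proposition \ref{prop:passes-thru-zero} this gives $\sys(X)=a$, while $\area(X)=(4g-2)\tfrac{\sqrt3}{4}a^2$, so the ratio equals $4/\bigl((4g-2)\sqrt3\bigr)$.

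I expect the only genuinely delicate point to be the inequality $|\dev(\tv)-\dev(\tw)|\ge\sys(X)$ for two vertices of a developed Delaunay triangle: one must use that $\dev$ remains an isometry on a \emph{closed} Delaunay cell even though its vertices are cone points, and that any two distinct vertices of a cell are related by a non-trivial deck transformation, so that the associated loop through $z_0$ is really non-contractible. Everything else --- the Euler-characteristic count and the bookkeeping with Lemma \ref{lem:triangle} --- is routine.
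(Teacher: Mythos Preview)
Your proof is correct and follows the same Fejes~T\'oth strategy as the paper. The one substantive difference is the one you flag yourself: the paper applies Lemma~\ref{lem:convex-polygon} directly to the Delaunay $2$-cells and then writes $\area(X)\ge(4g-2)\sqrt{3}\,r_0^2$, citing the bound ``at most $4g-2$'' on the number of cells from Proposition~\ref{prop:Delaunay-count}---which, as you observe, goes the wrong way if some cell fails to be a triangle. Your fix---subdivide into a genuine triangulation with \emph{exactly} $4g-2$ faces and apply Lemma~\ref{lem:triangle} to each---is the clean way to close this, and is presumably what the paper intends. Your treatment of the side-length lower bound (via distinct lifts and non-contractibility of the projected loop) and of the equality case is also more explicit than the paper's, but these are matters of exposition rather than content.
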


\begin{proof}
Let $z_0$ denote the zero of $\omega$. 
By Proposition \ref{prop:shortDelaunayedge}, each systole
that passes through $z_0$ lies in the 1-skeleton of the Delaunay cell decomposition 
of $(X, d_{\omega})$. Thus, if $r_0$ is the radius of the largest open Euclidean ball 
that can be embedded in $(X, d_{\omega})$ with center $z_0$,  then $r_0 = \sys(X)/2$.
Therefore
\begin{equation} \label{eqn:sys-one-zero}
  \sys^2(X)~ 
=~ 
4 \cdot r_0^2.
\end{equation}
Each open 2-cell $P$ of the Delaunay cell-deomposition is isometric to a convex Euclidean 
polygon. We may further subdivide each $2$-cell into Euclidean triangles to obtain
a triangulation of $X$ with one vertex, namely $z_0$. 
By Proposition \ref{prop:Delaunay-count}, there are at exactly $4g-2$ 
such triangles. Thus, Lemma \ref{lem:triangle} implies that 
\[
\area(X)~ =~ \sum_{T} \area(T)~
\geq~
(4g-2) \cdot \sqrt{3} \cdot r_0^2
\]
where the sum is over triangles in the triangulation.
By combining this estimate with (\ref{eqn:sys-one-zero}), we obtain the desired inequality 
(\ref{eqn:ratio-estimate}).
Moreover, if equality holds in (\ref{eqn:ratio-estimate}), 
then equality holds in Lemma  \ref{lem:triangle}, and 
so each triangle is an equilateral triangle. 
\end{proof}

We note that there is a unique surface (up to homothety) in $\Hcal(2)$ tiled by equilateral 
triangles (illustrated previously in Figure \ref{fig:genus2equilateral}). 
This surface realizes the maximum number of systoles and the maximum sytolic ratio over  $\Hcal(2)$.
In contrast, as indicated in the introduction, the maximum systolic ratio over $\Hcal(1,1)$ 
is not realized by the unique surface that realizes the maximum number of homotopy 
classes of systoles. 


\section{Geodesics on a surface in $\Hcal(1,1)$}

In this section, $X$ will denote a $\Hcal(1,1)$ surface of genus two equipped with a translation structure with two cone points $c_+$ and $c_-$ each of angle $4 \pi$.
The tangent bundle of a translation surface is parallelizable. In particular, each oriented segment has a direction. The hyperelliptic involution $\tau: X \to X$ is an isometry that reverses the direction of each oriented segment. The isometry $\tau$ has exactly six fixed points, the {\em Weierstrass points}. 

\begin{lem} \label{cplusminus}
The hyperellipic involution
$\tau$ interchanges cone points: $\tau(c_{\pm}) = c_{\mp}$
\end{lem}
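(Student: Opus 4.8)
The plan is to use the fact that the hyperelliptic involution $\tau$ acts on $H_1(X;\Zbb)$ as $-\mathrm{id}$, together with the fact that $\tau$ is an isometry of $(X,d_\omega)$, to pin down how $\tau$ must permute the two cone points. First I would record that $\tau$ preserves the set of cone points $Z_\omega = \{c_+, c_-\}$, since $\tau$ is an isometry and cone points are metrically distinguished (they are precisely the points where the metric fails to be locally Euclidean, and both have the same cone angle $4\pi$). So $\tau$ either fixes both $c_+$ and $c_-$, or it interchanges them. I must rule out the first possibility.

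The heart of the argument is the following: if $\tau(c_+) = c_+$, then $\tau$ fixes $c_+$ and acts on the tangent space at $c_+$. But a translation surface structure trivializes the tangent bundle, and $\tau$ reverses the direction of each oriented segment (as stated in the paragraph preceding the lemma). An isometry fixing a cone point of angle $4\pi$ and reversing all directions would have to be the "rotation by $\pi$" about $c_+$ in the cone-angle sense. I would examine what such a map does near $c_+$: in the local chart $\nu$ where $\omega = d(z^2)$, the condition that $\tau$ reverses directions and fixes $c_+$ forces $\tau$ to be $z \mapsto -z$ in the $z$-coordinate, which corresponds to $w \mapsto -w$ where $w = z^2$ — this is the identity on a punctured neighborhood in the $w$-plane downstairs but rotation by $\pi$ upstairs. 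The cleaner route, however, is homological: the quotient $X/\langle\tau\rangle$ is a sphere, and the fixed points of $\tau$ are the Weierstrass points, which are exactly the six branch points of the double cover $X \to \Sbb^2$. The Riemann–Hurwitz formula for a degree-two cover of the sphere branched over $b$ points gives $2 - 2g = 2(2 - b/2) = 4 - b$, so with $g = 2$ we get $b = 6$: there are exactly six fixed points of $\tau$, and these are the Weierstrass points. The key observation is then that a fixed point of $\tau$ in a smooth (Euclidean) region is a point of $\tau$-isotropy where $\tau$ acts as rotation by $\pi$ on the tangent plane; but at a cone point of angle $4\pi$, if $\tau$ fixed it, $\tau$ would act on the circle of directions of total angle $4\pi$ by an involution reversing orientation-of-arcs, and the only such isometric involution of a $4\pi$-cone that is compatible with the translation structure would have to be rotation by $2\pi$ (the deck-like symmetry) — which is not an involution distinct from a "half-turn" and, crucially, such a map would have to also fix $c_-$, making $\tau$ have too few or the wrong configuration of fixed points relative to the six Weierstrass points all lying off the cone points.

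Concretely, I would argue: $\tau$ has exactly six fixed points, none of which can be a cone point. Suppose for contradiction $\tau(c_+) = c_+$. Pull $\omega$ back and note $\tau^* \omega = -\omega$, since $\tau$ reverses the direction of oriented segments (equivalently, in the developing picture $\dev \circ \tilde\tau = -\dev + \text{const}$). In a neighborhood $V$ of $c_+$ with chart $\nu: V \to \Cbb$, $\nu(c_+) = 0$, and $\omega = \nu^*(d(z^2)) = \nu^*(2z\,dz)$. The condition $\tau^*\omega = -\omega$ near $c_+$, together with $\tau$ fixing $c_+$ and being holomorphic on $V \setminus \{c_+\}$ (isometries of translation surfaces are holomorphic away from cone points, up to orientation — here orientation-preserving since it lifts to $-\mathrm{id}$ on homology which preserves orientation), forces $\nu \circ \tau \circ \nu^{-1}$ to be $z \mapsto i z$ or $z \mapsto -iz$ near $0$ — but these have order $4$, not order $2$, contradicting $\tau^2 = \mathrm{id}$. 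Hence $\tau(c_+) \neq c_+$, and therefore $\tau(c_+) = c_-$, i.e. $\tau(c_\pm) = c_\mp$.

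The main obstacle I anticipate is being careful about the local model: one must correctly identify that the only holomorphic germ at $0$ of order dividing $2$ that sends $2z\,dz$ to $-2z\,dz$ would need to satisfy $\phi(z)^2 = -z^2$, hence $\phi(z) = \pm i z$, which has order $4$ — so no order-$2$ such germ exists, ruling out a cone-point fixed point. Getting the signs and the "$\tau^*\omega = -\omega$" claim right (it follows from $\tau$ reversing directions of oriented segments, as asserted in the section's opening paragraph, together with the fact that $\tau$ is orientation-preserving since it acts as $-\mathrm{id}$ on $H_1$ and hence as $+1$ on $H_2$) is the delicate bookkeeping; everything else is immediate.
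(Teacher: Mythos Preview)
Your proof is correct and follows essentially the same approach as the paper: both argue that if $\tau$ fixed $c_+$, then the local action of $\tau$ at that $4\pi$-cone point would have order $4$ rather than $2$, contradicting $\tau^2 = \mathrm{id}$. The paper phrases this tersely as ``$\tau$ acts as a rotation of $\pi$ radians, but the cone angle is $4\pi$,'' which is exactly your computation $\phi(z) = \pm iz$ in the chart where $\omega = d(z^2)$.
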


\begin{proof}
Since $\tau$ is an isometry the set $\{c_+, c_-\}$ is permuted. If $\tau(c_+)=c_+$, then in a neighborhood of $c_+$, the isometry $\tau$ acts as a rotation of $\pi$ radians. But the cone angle is $4 \pi$, and hence it is impossible for $\tau^2$ to be the identity.
\end{proof}

By Lemma \ref{cplusminus}, the quotient $X/\langle \tau\rangle$ is a sphere with one cone point $c^*$ with angle $4\pi$ and six cone points $\{c_1, \ldots, c_6\}$ each of angle $\pi$. Let $p: X \rightarrow X / \langle \tau \rangle$ denote the degree 2 covering map branched at $\{c_1, \ldots, c_6\}$. If $\gamma$ is a simple geodesic loop, then either $\gamma$
passes through two Weierstrass points in which case $p$ maps $\gamma$ onto a geodesic arc joining two distinct $\pi$ cone points, or $p \circ \gamma$ is a simple geodesic loop that misses the $\pi$ cone points.

A {\em flat torus} is a closed translation surface (necessarily of genus one). A {\em slit torus} 
is a flat torus with finitely many disjoint simple geodesic arcs removed.
 Each removed arc is called a {\em slit}. The completion of a slit torus (with respect to the 
natural length space structure) is obtained by adding exactly two geodesic segments for each 
removed arc. The interior angle between each pair of segments is $2 \pi$. 
This property characterizes slit tori.

\begin{lem} \label{lem:slit_torus}
Let $Y$ be a topological torus with a closed disc removed.
If $Y$ is equipped with a translation structure such that the
boundary\footnote{By {\em boundary} we mean the set of points
added by taking the metric completion of the length structure
associated to the translation structure.} component consists
of at most two geodesic segments, then $Y$ is isometric to a slit torus.
\end{lem}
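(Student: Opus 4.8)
The plan is to reconstruct the flat torus by gluing a Euclidean region onto $Y$ along its boundary, and then recognize the removed disc as a slit. First I would develop the situation locally: the metric completion $\overline{Y}$ has boundary consisting of at most two geodesic segments meeting the rest of $Y$. Since $Y$ is a translation surface (the structure is flat with trivial linear holonomy), each boundary geodesic segment is a straight Euclidean segment with a well-defined direction, and the interior angle at the (at most two) points where the segments meet is some angle $\theta$. The key structural observation is that the sum of the turning along the boundary, together with the curvature in the interior (which is zero away from cone points), must be compatible with the Gauss–Bonnet relation for $\overline{Y}$, a torus-with-disc, i.e. Euler characteristic $-1$. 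This forces the total exterior angle contribution from the boundary corners to be $-2\pi$; with at most two corners, the only possibility is two corners each of interior angle $2\pi$ (a single corner would need interior angle $3\pi$, which I should rule out as geometrically impossible for the completion of a length space, or handle it as the degenerate case where the two segments coincide). So $\overline{Y}$ has two boundary segments of equal length meeting at two cone points of angle $2\pi$ — exactly the `slit' picture.

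Next I would glue. Because the two boundary segments are straight and, by the angle computation, the boundary is a closed geodesic bigon whose two sides are parallel translates of each other (same direction, same length, since the holonomy around the boundary loop is trivial), I can glue the two segments together by the translation identifying them. The resulting space $\hat{Y}$ is a closed translation surface with no boundary; the two angle-$2\pi$ cone points become smooth points after the gluing (each contributes $\pi + \pi = 2\pi$ — here I need to check the angles on the two sides add correctly, which follows because the gluing is by a translation and the total angle around the glued point is $2\pi$). Hence $\hat Y$ is a closed flat surface with no cone points, so it is a flat torus. By construction $Y$ is obtained from $\hat Y$ by cutting along the image of the glued segment, i.e. $Y$ is a slit torus, with the removed disc corresponding to the neighborhood of the slit.

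The main obstacle I expect is the bookkeeping of angles and the degenerate cases: ruling out a single boundary segment forming a corner of interior angle $3\pi$, and correctly treating the case where the `two segments' degenerate (the boundary is a single segment traversed twice, i.e. the slit has zero-area but the torus is recovered by the same gluing). I would also need to verify that the gluing map is genuinely a translation — that the two boundary segments are parallel and equioriented — which follows from triviality of the linear holonomy of the translation structure on $Y$ together with the fact that the boundary loop is contractible in $\overline{Y}$ relative to the interior, but this deserves a careful sentence. Finally, one should check the reconstructed torus has the correct topology (genus one), which is automatic from $\chi(\hat Y) = \chi(\overline Y) + 1 = 0$.
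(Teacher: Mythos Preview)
Your gluing strategy is reasonable, but the argument has a real gap at its most important step. Gauss--Bonnet on $\overline{Y}$ gives only $\theta_1+\theta_2=4\pi$; it does \emph{not} force $\theta_1=\theta_2=2\pi$. Indeed, the paper's Figure~\ref{flat-non-slit-torus} exhibits a flat (half-translation) one-holed torus whose two boundary geodesics meet with angles $\theta_+\neq\theta_-$ summing to $4\pi$. So the translation hypothesis must enter in a sharper way than you have used it, and it is precisely here that your justification breaks down: the boundary loop is \emph{not} contractible in $\overline{Y}$. What is true---and what the paper exploits---is that the boundary loop represents a commutator in $\pi_1(Y)$, and since the (translational) holonomy is a homomorphism to the abelian group $\Cbb$, its value on the boundary loop is zero. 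From this one sees that the developed image of the boundary is a closed curve in $\Cbb$ made of at most two straight segments; two such segments must coincide, which simultaneously rules out the one-segment case, shows the two sides have equal length, and forces the two outgoing directions at each corner to agree, so that each $\theta_i\in 2\pi\Zbb$. Only then does Gauss--Bonnet pin down $\theta_1=\theta_2=2\pi$, and your gluing goes through.

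Once that holonomy observation is in hand, note that the paper's route is shorter: rather than gluing abstractly and checking smoothness, it simply identifies $Y$ with $\bigl(\Cbb/\hol(\pi_1(Y))\bigr)$ minus the common developed segment. Your approach can be completed along the lines above, but you should replace ``contractible in $\overline{Y}$'' with the commutator/abelian-holonomy statement and derive the equality of angles from it rather than from Gauss--Bonnet alone.
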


\begin{figure}[h]
\leavevmode \SetLabels
\L(.318*.65) $a$\\%
\L(.67*.81) $a$\\%
\L(.318*.32) $b$\\%
\L(.67*.16) $b$\\%
\L(.48*.05) $c$\\%
\L(.48*.92) $c$\\%
\L(.423*.48) $\theta_{+}$\\%
\L(.366*.48) $\theta_{-}$\\%
\L(.615*.69) $\theta_{-}$\\%
\L(.615*.29) $\theta_{-}$\\%
\endSetLabels
\begin{center}
\AffixLabels{\centerline{\epsfig{file =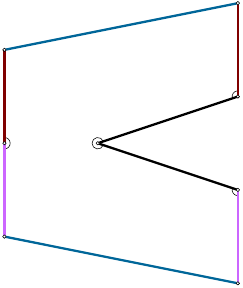,width=5cm,angle=0}}}
\vspace{-24pt}
\end{center}
\caption{Identify
the edges with the same labels via elements of ${\rm Isom}(\Rbb^2)$
to obtain a torus with a disc removed equipped with a flat structure
such that the boundary consists of exactly two geodesics. The angles
between the geodesics are not both $\pi$ though they sum to $4 \pi$.}\label{flat-non-slit-torus}
\end{figure}

\begin{remk}
Figure \ref{flat-non-slit-torus} shows that
Lemma \ref{lem:slit_torus} is false if one replaces the assumption
of translation structure with the assumption of flat structure.
\end{remk}

\begin{proof}[Proof of Lemma \ref{lem:slit_torus}]
Let $Z$ be the boundary of $Y$.
Let $A$ be the intersection of the maximal geodesic segments in $Z$.
By assumption $A$ is either empty, contains one point, or contains two points.
Let $\alpha: [0,1] \to Z$ be a parameterization of $Z$ such that if $A$
is nonempty, then $\alpha(0)=\alpha(1) \in A$. Let $\oalpha$ be the development of $\alpha$ into the plane $\Cbb$ as discussed in \S \ref{section:lfacts}.

Since $[\alpha] \in \pi_1(Y)$ is a commutator and $\Cbb$ is abelian, the holonomy of $[\alpha]$ equals 0. Hence by (\ref{dev-equivariance}), we have $\oalpha(1)- \oalpha(0)=\dev([\alpha] \cdot \talpha(0))=0$, and therefore $\oalpha(1)=\oalpha(0)$.

If $A$ is empty or consists of one point, then $\oalpha$ is a line segment,
but this is impossible as line segments in $\Cbb$ have distinct endpoints.
If $A$ consists of two points, then the curve $\oalpha$ consists of two line segments.
Since $\oalpha(1)=\oalpha(0)$, the line segments coincide.
Removing this segment and its translates by $\hol(\pi_1(Y))$ and quotienting it by $\hol(\pi_1(Y))$ gives a surface isometric to $Y$.
\end{proof}

As a corollary, we have the following sharpening of Theorem 1.7 in
\cite{McMullen}.

\begin{coro} \label{coro:slit}
If $\alpha$ is a separating simple closed geodesic on $X$,
then $X - \alpha$ is the disjoint union of two slit tori.
Moreover, each slit torus contains exactly three Weierstrass points,
and the hyperelliptic involution $\tau$ preserves $\alpha$.
\end{coro}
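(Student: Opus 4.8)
The plan is to combine Lemma \ref{lem:slit_torus} with elementary topological and symmetry considerations. First I would establish that $\tau$ preserves $\alpha$. Since the induced action of the hyperelliptic involution on $H_1(X;\Zbb)$ is $-\mathrm{id}$, and $\alpha$ is separating, $[\alpha]=0$ in homology, so homological considerations alone do not immediately give invariance; instead I would argue as follows. The two components $Y_1$, $Y_2$ of $X-\alpha$ are each one-holed tori (by the genus-two hypothesis and the fact that $\alpha$ is separating and non-contractible). The involution $\tau$ either swaps $Y_1$ and $Y_2$ or preserves each. If $\tau$ swapped them it would have no fixed points, contradicting the existence of six Weierstrass points; hence $\tau(Y_i)=Y_i$ for $i=1,2$, and therefore $\tau(\alpha)=\alpha$. (Here I should be a little careful: $\tau$ maps geodesics to geodesics and $\alpha$ is the unique geodesic in its homotopy class since it is simple and separating, so $\tau(\alpha)$ is freely homotopic to $\alpha$ and hence equal to it.)

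Next I would apply Lemma \ref{lem:slit_torus} to each $Y_i$. The completion of $Y_i$ with respect to the length metric $d_\omega$ adds the curve $\alpha$, which is a single closed geodesic, so the added boundary is a union of at most—in fact exactly—the geodesic segments into which the cone points of $X$ lying on $\alpha$ divide $\alpha$. The key point is that $\alpha$, being a \emph{simple closed geodesic} on $(X,d_\omega)$, can pass through at most finitely many cone points, and in genus two with only the two cone points $c_+$, $c_-$, a separating simple geodesic passes through either zero, one, or two of them. A short argument (using that $\tau(c_\pm)=c_\mp$ by Lemma \ref{cplusminus} and $\tau(\alpha)=\alpha$) shows $\alpha$ passes through either both cone points or neither: if it passed through exactly one, say $c_+$, then $\tau(\alpha)=\alpha$ would force $c_-=\tau(c_+)\in\alpha$ as well. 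In the case $\alpha$ misses both cone points, $\alpha$ lies in a flat cylinder and the frontier of that cylinder gives a shorter separating geodesic through a cone point, or more simply: the completion boundary of $Y_i$ is then a single closed geodesic (zero segments in the sense of segments with endpoints), and one checks the hypothesis of Lemma \ref{lem:slit_torus} still applies (boundary consisting of ``at most two geodesic segments'' includes the case of a closed geodesic). In the case $\alpha$ passes through both $c_+$ and $c_-$, these two points divide $\alpha$ into two arcs, and each $Y_i$ inherits a boundary consisting of those (at most) two geodesic segments. Either way, Lemma \ref{lem:slit_torus} applies and $Y_i$ is isometric to a slit torus.

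Finally, for the count of Weierstrass points: $X$ has six Weierstrass points, all fixed by $\tau$. None of them lies on $\alpha$, because a fixed point of $\tau$ on $\alpha$ together with the orientation-reversing action of $\tau$ on the oriented geodesic $\alpha$ would force a cone point of angle a multiple of $2\pi$ at that location incompatible with a smooth point—more carefully, $\tau$ restricted to $\alpha$ is an involution of a circle reversing orientation, hence has exactly two fixed points, and at each such point $\tau$ acts locally as a reflection, which is not orientation-preserving, contradicting that $\tau$ is holomorphic away from cone points unless the point is a cone point; but $c_+,c_-$ are interchanged by $\tau$ so are not fixed. Hence all six Weierstrass points are distributed among $Y_1$ and $Y_2$. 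Since $\tau$ preserves each $Y_i$ and acts on the slit torus $Y_i$ as an isometry which, on the underlying flat torus $T_i$ obtained by filling the slit, is the elliptic involution $z\mapsto -z$ (it reverses all directions), it has exactly four fixed points on $T_i$; at most one of these can be ``lost'' to the slit identification (the slit endpoints on $Y_i$ are where the genus was cut, and the relevant fixed point structure shows exactly one of the four torus-fixed-points becomes a non-Weierstrass configuration), leaving exactly three Weierstrass points in each $Y_i$, consistent with $3+3=6$. The main obstacle I anticipate is making this last bookkeeping precise—tracking which fixed points of the elliptic involution on the filled torus survive as Weierstrass points of $X$ versus which are absorbed into the slit/cone-point data—so I would lean on the symmetry $3+3=6$ together with the fact that $\tau$ cannot fix a smooth point of $\alpha$ to pin it down.
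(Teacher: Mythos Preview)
Your overall strategy is sound, and in one respect more careful than the paper's own proof, but there is a genuine error in your Weierstrass-point analysis, and your route to the slit-torus conclusion is more roundabout than necessary.

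For the slit-torus step, the paper simply observes that since $\alpha$ is simple and $X$ has only two cone points, the boundary of each $Y_i$ consists of at most two geodesic segments; Lemma~\ref{lem:slit_torus} then applies directly, with no need to first determine whether $\alpha$ hits both cone points or neither. (In fact the lemma's proof shows the ``neither'' case is impossible.) Your case analysis is correct but unnecessary.

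Your argument that $\tau$ preserves each $Y_i$ is reasonable, and actually fills a small gap in the paper's presentation, which tacitly assumes this when it writes ``the restriction of $\tau$ to a slit torus component.'' One small fix: if $\tau$ swapped $Y_1$ and $Y_2$ it could still have up to two fixed points on $\alpha$, not zero; but two is still fewer than six, so your conclusion stands.

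The real problem is your fixed-point analysis on $\alpha$. You assert that $\tau|_\alpha$ is an orientation-reversing involution of the circle and hence has exactly two fixed points. This is false under your own hypothesis that $\tau(Y_i)=Y_i$. Since $\tau$ is holomorphic (hence orientation-preserving on $X$) and preserves each side of $\alpha$, it must preserve the orientation of $\alpha$ as a circle. It is true that $\tau$ reverses the \emph{direction} (holonomy) of every straight segment, but $\alpha$ consists of two segments $\alpha_1,\alpha_2$ meeting at $c_+,c_-$, and the effect of $\tau$ is to \emph{swap} these two segments, not to reverse each one in place. Thus $\tau|_\alpha$ is the half-rotation, with no fixed points at all. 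Your subsequent claim that at a fixed point ``$\tau$ acts locally as a reflection, which is not orientation-preserving'' is also wrong: $\tau$ is everywhere orientation-preserving; at a smooth fixed point it would act as rotation by $\pi$, and the actual contradiction would be that such a rotation swaps the two sides of $\alpha$.

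The paper's approach to the count avoids this tangle entirely: once each $Y_i$ is a slit torus, fill the slit to get a flat torus $T_i$; then $\tau|_{Y_i}$ extends to the elliptic involution $\underline\tau$ of $T_i$ (since it reverses all directions), which has exactly four fixed points. One of these is the midpoint of the slit---which in $X$ corresponds to two points on $\alpha$ swapped by $\tau$, hence not Weierstrass points---and the remaining three lie in the interior of $Y_i$ and are genuine Weierstrass points of $X$.
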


\begin{proof}
Since $\alpha$ is separating and $X$ is closed of genus two,
the complement of $\alpha$ consists of two one-holed tori $Y_+$ and $Y_-$.
Since $\alpha$ is geodesic, the boundaries of $Y_+$ and $Y_-$ are
piecewise geodesic. Since $\alpha$ is simple and there are only two
cone points, the number of geodesic pieces of $Y_{\pm}$ is at most two.
Lemma \ref{lem:slit_torus} implies that each component is a slit torus.

The restriction of $\tau$ to a slit torus component
determines an elliptic involution $\underline{\tau}$ of the torus.
The endpoints of each slit correspond to the cone points $c_+$ and $c_-$,
and so the are preserved by the induced elliptic involution.
Since $\tau$ preserves the cone points, the map $\underline{\tau}$
preserves the slit, and hence $\alpha$ is preserved by $\tau$.
In particular, the midpoint of the slit is fixed
by $\underline{\tau}$ and the three other fixed points of $\underline{\tau}$
are fixed points of $\tau$.
\end{proof}

A {\it cylinder} of girth $\ell$ and width $w$ is an isometrically embedded
copy of $(\Rbb/\ell\Zbb) \times [-w/2,w/2]$.
Each cylinder is foliated by geodesics indexed by $t \in [-w/2,w/2]$.
We will refer to the geodesic that corresponds to $t=0$ as the
{\em middle geodesic}. By Corollary \ref{coro:slit}, if a simple closed
geodesic lies in a cylinder, then it is nonseparating.

A cylinder $C$ is said to be {\it maximal}
if it is not properly contained in another cylinder.
If a closed translation surface has a cone point, then each geodesic
that does not pass through a cone point lies in a unique maximal cylinder.

Because the hyperelliptic involution $\tau$ reverses the orientation
of isotopy classes of simple curves, the map $\tau$ restricts to
an orientation reversing isometry of each maximal cylinder $C$, and thus it
restricts to an orientation reversing isometry of the middle geodesic
$\gamma \subset C$. In particular, it contains two Weierstrass points.

\begin{prop} \label{prop:nonseparating}
If $\gamma$ is a nonseparating simple closed geodesic, then $\gamma$ is homotopic
to a unique geodesic $\gamma'$ such that the restriction of $\tau$
to $\gamma'$ is an isometric involution of $\gamma'$.
\end{prop}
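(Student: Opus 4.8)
The plan is to treat two cases according to whether $\gamma$ lies in a maximal cylinder or passes through a cone point, and in each case exhibit a homotopic geodesic on which $\tau$ restricts to an involution, then argue uniqueness.

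First I would handle existence. If $\gamma$ does not pass through a cone point, then $\gamma$ lies in a maximal cylinder $C$, and as noted in the paragraph preceding the statement, $\tau$ restricts to an orientation-reversing isometry of $C$ and hence of its middle geodesic $\gamma'$; since an orientation-reversing isometry of a circle is an involution with two fixed points, $\gamma'$ is the desired representative, and it is homotopic to $\gamma$ because both lie in $C$. If instead $\gamma$ already passes through a cone point, then by Lemma \ref{cplusminus} the two cone points are swapped by $\tau$, so $\gamma$ must pass through both $c_+$ and $c_-$ (it cannot pass through just one and be $\tau$-invariant up to homotopy unless it passes through both, as the image $\tau(\gamma)$ is homotopic to $\gamma$ and passes through the other cone point). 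Here I would instead argue directly: apply Proposition \ref{prop:passes-thru-zero} to get a geodesic representative through a cone point; if this geodesic is not $\tau$-invariant, use that $\tau$ preserves its free homotopy class to show $\gamma \cup \tau(\gamma)$ bounds a bigon or forces an intersection at a smooth point, contradicting minimality of geodesic representatives. The cleaner route, which I would follow, is: the middle geodesic construction covers the direct case, and for the indirect case one observes that a nonseparating $\gamma$ homotopic to a geodesic through a cone point must in fact have a representative through both cone points interchanged by $\tau$ — this is exactly the ``Weierstrass arc'' discussion promised in \S 4 of the introduction, so I would set it up as: the geodesic representative $\gamma'$ through the cone point is unique in its homotopy class (geodesic representatives in nonpositive curvature are unique up to the flat-strip phenomenon, and an indirect geodesic cannot lie in a flat strip), hence $\tau(\gamma') = \gamma'$ since $\tau(\gamma')$ is a geodesic homotopic to $\gamma'$.

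For uniqueness, the key input is that $(\widetilde X, d_{\widetilde\omega})$ is CAT$(0)$, as used in Proposition \ref{prop:maxcyl}. Two homotopic simple closed geodesics with the property that $\tau$ restricts to an involution of each: if both are direct, Proposition \ref{prop:maxcyl} puts them in the closure of the same maximal cylinder, and there is exactly one $\tau$-invariant geodesic in a cylinder on which $\tau$ acts by an orientation-reversing isometry (the middle one), so they coincide. If one is direct and the other indirect, the indirect one lies in the frontier of the maximal cylinder containing the direct one, but then it is not homotopic to the direct one within the open cylinder — so this case is vacuous once one checks that a direct geodesic and a frontier geodesic of its maximal cylinder are not freely homotopic as simple closed curves representing the same class; more carefully, any geodesic homotopic to a direct simple geodesic loop is itself direct by Proposition \ref{prop:maxcyl} (its lift is asymptotic to the lift of $\gamma$, hence bounds a flat strip containing no cone points). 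So in fact $\gamma'$ is direct and unique. If $\gamma$ is indirect to begin with, uniqueness of the geodesic representative follows because a homotopy class containing an indirect geodesic contains no direct geodesic (again by the flat-strip dichotomy) and indirect geodesics through a fixed pair of cone points in a fixed homotopy class are unique by the CAT$(0)$ property — distinct such geodesics would bound a region whose interior, being in a flat strip, contains no cone point, forcing them to coincide.

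The main obstacle I anticipate is the uniqueness argument in the indirect case: one must rule out two distinct $\tau$-invariant geodesics through $\{c_+,c_-\}$ in the same homotopy class, and the flat-strip theorem must be applied with care since the geodesics pass through cone points and the strip between their lifts could a priori touch cone points on its boundary. The resolution is that the lifts of a simple closed geodesic are disjoint embedded lines, and the convex hull of two such asymptotic lines is a flat strip whose \emph{interior} is free of cone points; a geodesic through a cone point cannot lie in the interior of such a strip, which pins down both geodesics to the boundary of the strip and, combined with the $\tau$-equivariance (which identifies the strip with a standard half-open cylinder), forces them to be equal. I would also need the small observation, used implicitly above, that $\tau$ acts on $H_1(X;\mathbb Z)$ as $-\mathrm{id}$, so a nonseparating curve is never fixed componentwise in a trivial way — but this is classical and I would cite it as in the proof of Theorem \ref{thm:hyperelliptic-2g-2}.
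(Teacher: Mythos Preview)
Your core approach matches the paper's: split into the case where $\gamma$ lies in a maximal cylinder (take the middle geodesic) and the case where it does not (then $\gamma$ is the unique geodesic in its homotopy class, so $\tau(\gamma)=\gamma$ automatically since $\tau$ is an isometry preserving the class with reversed orientation). The paper's proof is exactly these two sentences and nothing more.

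Your write-up, however, is cluttered with detours that are both unnecessary and in places incorrect. The discussion beginning ``$\gamma$ must pass through both $c_+$ and $c_-$'' is a red herring: the $\tau$-invariant representative need not pass through the cone points at all (the middle geodesic of a cylinder typically does not), and your inference from Lemma~\ref{cplusminus} does not follow. You rightly abandon this, but it obscures the argument. More substantively, your uniqueness claim that ``any geodesic homotopic to a direct simple geodesic loop is itself direct by Proposition~\ref{prop:maxcyl}'' is false: Proposition~\ref{prop:maxcyl} only concerns pairs of \emph{direct} geodesics, and the frontier of a maximal cylinder may well contain an indirect simple closed geodesic homotopic to the core. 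The correct reason those frontier geodesics are not $\tau$-invariant is that the orientation-preserving involution $\tau$ restricted to the cylinder has the form $(t,\theta)\mapsto(w-t,-\theta+b)$ and therefore \emph{swaps} the two boundary components; only the middle circle is preserved. With that observation your uniqueness argument goes through, and in fact becomes more explicit than the paper's (which leaves uniqueness largely to the reader).
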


\begin{proof}
If $\gamma$ does not contain a cone point, then $\gamma$ belongs to
a maximal cylinder. If $\gamma$ belongs to a maximal cylinder $C$, then
it is homotopic to the middle geodesic $\gamma' \subset C$.

If $\gamma$ does not belong to a cylinder, then $\gamma$ is the unique
geodesic in its homotopy class. Since $\tau$ reverses the orientation of
the homotopy classes of simple loops, it acts like an orientation reversing
isometry on $\gamma$.
\end{proof}

Proposition \ref{prop:nonseparating} reduces the counting of homotopy
classes of nonseparating systoles to a count of nonseparating systoles
that pass through exactly two Weierstrass points.
In the next two sections we analyse such geodesics.


\section{Direct Weierstrass arcs} \label{sec:direct-arcs}
 
If $\gamma$ is a simple closed
geodesic on $X$ that passes through two Weierstrass points,
then the projection $p(\gamma)$ is an arc on $X / \langle \tau\rangle$
that joins one angle $\pi$ cone point to another angle $\pi$ cone point.
We will call each such an arc a {\em Weierstrass arc}. Note that the $p$ inverse image
of a Weierstrass arc is a geodesic and so we obtain a one-to-one correspondence
between homotopy classes of nonseparating simple geodesic loops on $X$ and
Weierstrass arcs on $X/ \langle \tau\rangle$. A Weierstrass arc that is the image of
a systole will be called a {\em systolic Weierstrass arc}. Note that 
each systolic Weierstrass arc has length equal to $\sys(X)/2$.
 
The Weierstrass arcs come in two flavors. We will say that a Weierstrass arc is
{\em indirect} if it passes through the angle $4\pi$ cone point, and otherwise
we will call it {\em direct}.

\begin{lem} \label{lem:at_most_one_direst_arc}
There is at most one direct systolic Weierstrass arc joining two angle $\pi$ cone points.
\end{lem}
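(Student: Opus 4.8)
The plan is to argue by contradiction: suppose there are two distinct direct systolic Weierstrass arcs $a$ and $a'$ that both join the angle $\pi$ cone point $c_i$ to the angle $\pi$ cone point $c_j$. Lifting to $X$ via $p$, we obtain two nonseparating simple closed systolic geodesics $\gamma = p^{-1}(a)$ and $\gamma' = p^{-1}(a')$, each passing through the two Weierstrass points $w_i = p^{-1}(c_i)$ and $w_j = p^{-1}(c_j)$ and each avoiding the cone points $c_\pm$ of $\omega$ (since $a,a'$ are direct). Each of $\gamma,\gamma'$ is thus a direct simple geodesic loop in the sense of \S\ref{section:lfacts}, so each lies in the closure of a maximal Euclidean cylinder.

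First I would observe that $\gamma$ and $\gamma'$ are \emph{not} homotopic: a direct Weierstrass arc determines its homotopy class of nonseparating loop uniquely (the stated one-to-one correspondence between Weierstrass arcs and homotopy classes of nonseparating simple geodesic loops), and $a \neq a'$. Next, since both $\gamma$ and $\gamma'$ pass through the common Weierstrass points $w_i$ and $w_j$, the loops $\gamma$ and $\gamma'$ intersect. The key point is a length/angle estimate at the shared points. Decompose $\gamma = b_1 \cup b_2$ and $\gamma' = b_1' \cup b_2'$ into the two geodesic arcs from $w_i$ to $w_j$ (each of length $\sys(X)/2$, since $\tau$ interchanges the two arcs of each). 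If $\gamma$ and $\gamma'$ shared the \emph{same} arc from $w_i$ to $w_j$, say $b_1 = b_1'$, then $b_2 \cup b_2'$ would be a closed curve, which (being a concatenation of two systolic-half-length arcs with the same endpoints) would have length $\le \sys(X)$, hence would itself be a systole or be contractible; analysing this case should force $\gamma = \gamma'$, a contradiction. So the four arcs $b_1, b_2, b_1', b_2'$ are pairwise distinct as arcs from $w_i$ to $w_j$.

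The main step, and the hard part, is then to rule out this configuration using the CAT(0) geometry of $(\tX, d_{\tomega})$. Fix a lift $\tw_i$ of $w_i$; the arcs $b_1, b_2, b_1', b_2'$ lift to four geodesic segments emanating from $\tw_i$, ending at four lifts of $w_j$. At a smooth point two systoles must cross transversally at angle $\pi$ (otherwise a corner-smoothing shortcut exists), but at the Weierstrass points $w_i, w_j$ — which are generically smooth points of $(X, d_\omega)$ unless they coincide with $c_\pm$, and here they do not since the arcs are direct — the same smoothing argument applies: if two distinct systolic arcs with a common endpoint $w$ leave $w$ making an angle less than $\pi$ on one side, one can shortcut. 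One concludes that at $w_i$ the four outgoing directions of $b_1, b_2, b_1', b_2'$ must be arranged so that consecutive ones differ by exactly $\pi$, which is impossible for four distinct directions at a smooth (angle $2\pi$) point. Equivalently: at a smooth point at most two systolic arcs can emanate with pairwise "opposite" directions, so at $w_i$ we cannot have three or more distinct systolic half-arcs, contradicting the existence of the four arcs $b_1,b_2,b_1',b_2'$ above (which give at least three distinct directions at $w_i$ once we also use that $b_1,b_2$ are opposite and $b_1',b_2'$ are opposite). This forces $\{b_1,b_2\} = \{b_1',b_2'\}$ and hence $\gamma = \gamma'$, so $a = a'$.

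I expect the delicate point to be making the "angle $\ge \pi$ at a smooth common endpoint" argument fully rigorous in the presence of the branch points of the developing map — one must check that the relevant tubular-neighborhood/corner-smoothing construction can be carried out disjointly from $Z_\omega = \{c_+, c_-\}$, which is exactly where directness of the arcs is used. One should also be slightly careful that $w_i$ and $w_j$ are genuinely distinct from $c_\pm$: by Lemma \ref{cplusminus} the cone points $c_\pm$ are interchanged by $\tau$ and hence are not Weierstrass points, so indeed $w_i, w_j \notin \{c_+, c_-\}$ and the local metric there is flat. Modulo these checks, the contradiction is clean and yields the claimed uniqueness.
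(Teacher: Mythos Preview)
Your core idea is the same as the paper's: lift the two distinct direct systolic Weierstrass arcs to systoles $\gamma,\gamma'$ on $X$ sharing the Weierstrass points $w_i,w_j$, concatenate a half-arc of $\gamma$ with a half-arc of $\gamma'$ to get a closed curve of length $\sys(X)$, and then shorten at the corner. The paper does exactly this in three lines: pick one arc from each, observe the angle at the (smooth) intersection point is strictly between $0$ and $\pi$, perturb, done.

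Your write-up, however, dresses this simple surgery in unnecessary scaffolding, and one piece of that scaffolding is wrong as stated. The sentence ``at a smooth point two systoles must cross transversally at angle $\pi$'' is false: two direct geodesics meeting at angle $\pi$ at a smooth point are tangent and hence coincide. What you actually need (and what the paper uses) is that the crossing angle is strictly in $(0,\pi)$, so that the concatenated loop has a genuine corner that can be rounded. Once you accept that, the whole detour through ``four outgoing directions with consecutive angles forced to be $\pi$'' is superfluous --- a single concatenation already yields the contradiction. One point that both you and the paper leave implicit, and that you should make explicit since you flag it as the ``hard part'': the concatenation is noncontractible because if $b \cdot (b')^{-1}$ were null-homotopic then $b$ and $b'$ would be homotopic rel endpoints, hence equal (by uniqueness of geodesics in the $\mathrm{CAT}(0)$ cover $\tX$), contradicting the distinct tangent directions at $w_i$.
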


\begin{proof}
Suppose to the contrary that there exist two distinct direct systolic Weirestrass arcs
that both join the angle $\pi$ cone point $c$ to the angle $\pi$ cone point $c' \neq c$. 
These arcs lift to closed systoles $\gamma_+$ and $\gamma_-$ 
that interesect transversally at two Weierstrass points corresponding to $c$ and $c'$. 
In particular, the Weierstrass points divide each geodesic into two arcs. 
By concatenating a shorter\footnote{If the arcs have the same length, then choose either arc.} 
arc of $\gamma_+$ with a shorter arc of $\gamma_-$ we construct a piecewise geodesic
closed curve $\alpha$ that has length at most the systole.
Since the angle between the arcs is strictly between $0$ and $\pi$, 
we can perturb $\alpha$ to obtain a shorter curve whose length
is strictly less than the systole. This contradicts the assumption
that $\gamma_+$ and $\gamma_-$ are both systoles.
\end{proof}

\begin{remk}\label{remk:surgery}
The argument in the above lemma was that the concatenation of two geodesic arcs that meet with an angle strictly less than $\pi$ cannot be of minimal length in their homotopy class. In particular, they can't form a systole. This argument will be used several times. 
\end{remk}

\begin{prop} \label{prop:intersection-direct-W-arcs}
Let $\gamma_+$ and $\gamma_-$ be distinct nonseparating systoles on $X$.
If each contains two Weierstrass points and neither contains 
a $4\pi$ cone point, then the intersection $\gamma_+ \cap \gamma_-$
is either empty or consists of a single Weierstrass point. In particular, 
the geometric intersection number $i(\gamma_+, \gamma_-)$ equals either zero or one. 
\end{prop}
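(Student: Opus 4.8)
The plan is to argue by contradiction along the lines of Lemma \ref{lem:at_most_one_direst_arc}, using the surgery trick recorded in Remark \ref{remk:surgery}. Suppose $\gamma_+ \neq \gamma_-$ are nonseparating systoles, each passing through exactly two Weierstrass points and neither through the $4\pi$ cone point. First I would observe that all intersection points between $\gamma_+$ and $\gamma_-$ occur at smooth points of the metric \emph{except} possibly at Weierstrass points: indeed, away from Weierstrass points both curves are direct geodesics lying in the smooth locus, and as already noted in the discussion following Theorem \ref{thm:main}, if two curves crossed transversally at a smooth point and crossed somewhere else as well, then a standard perturbation (Remark \ref{remk:surgery}) would shorten one of them below the systolic length. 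So the first reduction is: either $\gamma_+\cap\gamma_-$ is empty, or it consists of a single smooth transverse intersection (which we will rule out), or it is contained in the set of Weierstrass points.

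Next I would rule out a lone smooth transverse intersection. If $\gamma_+$ and $\gamma_-$ meet exactly once, transversally, at a smooth point, then $i(\gamma_+,\gamma_-)=1$, which would force $\gamma_+$ and $\gamma_-$ to be non-homotopic and to fill a one-holed torus; but a single transverse crossing of two simple closed geodesics at a smooth point of a nonpositively curved (indeed CAT(0), by the argument in Proposition \ref{prop:maxcyl}) surface is impossible for geodesics — two distinct geodesics in a CAT(0) space have at most one lift-intersection, and a transverse crossing at a regular point can be perturbed via the usual four-gon/bigon surgery to produce a shorter representative, contradicting systolicity. More cleanly: if they cross once transversally at a smooth point $q$, then going around $\gamma_+$ from $q$ back to $q$ and around $\gamma_-$ from $q$ back to $q$ produces two loops through $q$; concatenating the shorter half-loops appropriately and smoothing the corner at $q$ (angle strictly less than $\pi$ on one side) yields a shorter noncontractible loop, the same contradiction as in Lemma \ref{lem:at_most_one_direst_arc}. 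The main technical care here is bookkeeping the angles so that on at least one side the concatenation angle is genuinely less than $\pi$; since a transverse crossing at a smooth point has all four angles equal to... no — has two pairs of vertical angles summing to $2\pi$, so at least one pair is $\le\pi$, and if it equals $\pi$ the crossing is not transverse, so strictly less than $\pi$, giving the perturbation.

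Finally I would rule out intersections at Weierstrass points beyond a single one. Each of $\gamma_+$, $\gamma_-$ passes through exactly two Weierstrass points, so they can share at most two Weierstrass points. If they share two Weierstrass points $w_1, w_2$, then $p(\gamma_+)$ and $p(\gamma_-)$ are two distinct direct systolic Weierstrass arcs both joining the two angle-$\pi$ cone points $p(w_1)$ and $p(w_2)$ — this is exactly the situation excluded by Lemma \ref{lem:at_most_one_direst_arc}. Hence they share at most one Weierstrass point, and combined with the previous paragraph (no smooth intersections possible), $\gamma_+\cap\gamma_-$ is empty or a single Weierstrass point. The intersection number statement $i(\gamma_+,\gamma_-)\in\{0,1\}$ follows, since a single shared Weierstrass point (a cone point of angle $4\pi$ on $X$, where the two geodesics can be arranged by genericity of the homotopy class to cross transversally) contributes at most $1$ to the geometric intersection number — and one must check the two arcs can be realized to cross rather than touch, which again uses that each is the unique geodesic in its homotopy class or the middle curve of its maximal cylinder as in Proposition \ref{prop:nonseparating}. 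I expect the main obstacle to be the careful angle accounting at a Weierstrass point, where the cone angle is $4\pi$ rather than $2\pi$: one must verify that two systolic Weierstrass arcs through a common Weierstrass point genuinely cross once and cannot be rerouted around the cone point to reduce length, which is where the hypothesis that they are systoles (not merely geodesics) is essential.
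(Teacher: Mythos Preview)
Your reduction to Lemma \ref{lem:at_most_one_direst_arc} for the case of two shared Weierstrass points is correct and is exactly what the paper does. The gap is in your treatment of intersections at non-Weierstrass points.

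First, a factual slip: Weierstrass points are \emph{not} cone points of $(X,d_\omega)$. They are smooth points (fixed points of $\tau$); the $4\pi$ cone points are the two zeros of $\omega$, and by Lemma \ref{cplusminus} these are \emph{swapped} by $\tau$, not fixed. So your closing worry about ``angle accounting at a Weierstrass point, where the cone angle is $4\pi$'' rests on a misconception.

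More seriously, your argument ruling out a single smooth transverse intersection $q$ does not work. A single point $q$ does not cut a loop into two arcs; there are no ``shorter half-loops'' to concatenate. The surgery in Lemma \ref{lem:at_most_one_direst_arc} succeeds precisely because there the curves meet at \emph{two} points, so each is divided into two subarcs. With a lone crossing you only have two full loops based at $q$, and no recombination of pieces yields anything of length below $\sys(X)$.

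The paper avoids this difficulty by passing to the quotient $X/\langle\tau\rangle$. There each $\gamma_\pm$ projects to a direct Weierstrass \emph{arc} $\alpha_\pm$ of length $\sys(X)/2$, and any interior intersection point $p$ genuinely divides each arc into two subarcs, the shorter of which has length at most $\sys(X)/4$. Concatenating the two short subarcs and perturbing at $p$ produces an arc between angle-$\pi$ cone points of length strictly less than $\sys(X)/2$, which lifts to a noncontractible loop on $X$ of length $<\sys(X)$. That is the missing idea: working downstairs turns loops into arcs, so a single crossing already separates each curve into pieces.

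(Your upstairs approach can be rescued: since both $\gamma_\pm$ are $\tau$-invariant, any non-Weierstrass intersection point $q$ comes paired with $\tau(q)\neq q$, so the ``single smooth intersection'' case never actually occurs. But you did not invoke this, and the argument you gave in its place does not stand on its own.)
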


\begin{proof}
Each projection $\alpha_{\pm} =p(\gamma_{\pm})$ is a direct systolic Weierstrass arc. 
By Lemma \ref{lem:at_most_one_direst_arc}, at most one angle $\pi$ cone point 
lies in the intersection $\alpha_+ \cap \alpha_-$, and hence $\gamma_+ \cap \gamma_-$ 
contains at most one Weierstrass point. 

Suppose (to the contrary) that the intersection $\gamma_+ \cap \gamma_-$ were to contain
a point on $X$ that is not a Weierstrass point. Then $\alpha_+ \cap \alpha_-$
would contain a point $p$ that is not an angle $\pi$ cone point. Since, $\gamma_{\pm}$
is a systole, there would exist a subarc, $\beta_{\pm}$,of $\alpha_{\pm}$ that joins 
$p$ to an endpoint of $\alpha_{\pm}$ whose length is at most $\sys(X)/4$. 
By concatenating $\beta_+$ and $\beta_-$ and perturbing, we would obtain an arc 
joining two angle $\pi$ cone points whose length would be strictly less than $\sys(X)/2$. 
This arc would lift to a closed curve on $X$ whose length is less than $\sys(X)/2$,
a contradiction.  
\end{proof}

The following result is central to the proof of Theorem \ref{thm:main}.

\begin{thm} \label{prop:bivalent-direct}
If $c$ is a cone point on $X/\langle \tau\rangle$ with angle $\pi$, then at most
two direct systolic Weierstrass arcs have an endpoint at $c$.
Thus, there are at most six direct systolic Weierstrass arcs.
\end{thm}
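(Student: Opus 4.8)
The plan is to argue by contradiction: suppose $c$ is an angle $\pi$ cone point that is an endpoint of three distinct direct systolic Weierstrass arcs $\alpha_1,\alpha_2,\alpha_3$. By Lemma \ref{lem:at_most_one_direst_arc} the other endpoints $c_1,c_2,c_3$ of these arcs are pairwise distinct. Each $\alpha_i$ lifts to a nonseparating systole $\gamma_i$ on $X$ passing through two Weierstrass points, one of which is the Weierstrass point $w$ lying over $c$. By Proposition \ref{prop:intersection-direct-W-arcs}, the geodesics $\gamma_i$ pairwise intersect only at $w$ (they cannot be disjoint since they all pass through $w$). Thus near $w$ we have three geodesics emanating from a single point; since $w$ is a smooth point of $(X,d_\omega)$ with total angle $2\pi$, each $\gamma_i$ contributes two tangent directions at $w$, giving six directions around a $2\pi$ cone, hence the consecutive angular gaps sum to $2\pi$.

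The key local move is the surgery of Remark \ref{remk:surgery}: the angle at $w$ between any outgoing ray of $\gamma_i$ and any outgoing ray of $\gamma_j$ ($i\neq j$) cannot be strictly less than $\pi$ if we can splice half of $\gamma_i$ with half of $\gamma_j$ to form a loop of length $\le \sys(X)$. So first I would record that each $\gamma_i$ is divided by its two Weierstrass points $w$ and $w_i$ into two arcs $\gamma_i^+,\gamma_i^-$ which are interchanged by $\tau$ and hence have equal length $\sys(X)/2$. Given two distinct indices $i,j$, pick any arc of $\gamma_i$ and any arc of $\gamma_j$ with a common endpoint at $w$; their concatenation is a loop through $w$ and $w_i$ (or $w_j$) of length $\sys(X)$, which must itself be a systole and in particular locally geodesic, forcing the angle between the chosen rays at $w$ to be exactly $\pi$ — otherwise a perturbation shortens it. But with six rays around $2\pi$, one cannot have every pair of rays from *different* geodesics subtending exactly $\pi$: for instance the two rays of $\gamma_1$ split the $2\pi$ into two angles summing to $2\pi$, and the four rays of $\gamma_2,\gamma_3$ are distributed among these two sectors; in whichever sector contains at least two of them, those two are less than $\pi$ apart, and if they belong to different geodesics we get the contradiction directly, while if they belong to the same geodesic — impossible, since the two rays of a single $\gamma_k$ are exactly $\pi$ apart and hence land in distinct sectors. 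Hence some pair of rays from distinct geodesics subtends an angle strictly between $0$ and $\pi$, and Remark \ref{remk:surgery} produces a curve strictly shorter than the systole in a nontrivial homotopy class, a contradiction.

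A more careful version avoids the homotopy-triviality issue: I would check that the spliced loop is non-null-homotopic because it projects under $p$ to a path joining the two distinct angle $\pi$ cone points $c$ and $c_i$, which cannot bound a disk on the sphere $X/\langle\tau\rangle$ punctured appropriately; this is the same reasoning used in the proof of Proposition \ref{prop:intersection-direct-W-arcs}. Once the pigeonhole argument on the six rays around $w$ yields a bad angle, the surgery of Remark \ref{remk:surgery} closes the case. The final sentence, "there are at most six direct systolic Weierstrass arcs," follows immediately: $X/\langle\tau\rangle$ has exactly six angle $\pi$ cone points, each is an endpoint of at most two direct systolic Weierstrass arcs, and each such arc has two endpoints, so the count is at most $6\cdot 2/2 = 6$.

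The main obstacle I anticipate is the combinatorial bookkeeping of the angles at $w$: one must be careful that "the concatenation has length $\le\sys(X)$ and is non-contractible, hence is itself a systole" is correctly justified (in particular that it is a *simple* enough curve for the perturbation argument, or that simplicity is not actually needed — a non-contractible loop of length $\le\sys(X)$ that is not locally geodesic can still be shortened), and that the pigeonhole step correctly handles the degenerate configurations where several rays coincide in direction (which would actually mean two of the $\gamma_i$ share a sub-arc, contradicting Proposition \ref{prop:intersection-direct-W-arcs}). I would also double-check the claim that the two rays of a single $\gamma_k$ at $w$ are exactly antipodal on the $2\pi$ cone, which is just the statement that $\gamma_k$ is a smooth geodesic through the smooth point $w$.
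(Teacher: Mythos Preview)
There is a genuine gap in your surgery step. You write that concatenating a half-arc of $\gamma_i$ with a half-arc of $\gamma_j$ at $w$ yields ``a loop through $w$ and $w_i$ (or $w_j$) of length $\sys(X)$''. It does not. Each half of $\gamma_i$ runs from $w$ to the other Weierstrass point $w_i$ on $\gamma_i$, and each half of $\gamma_j$ runs from $w$ to $w_j$. By Lemma~\ref{lem:at_most_one_direst_arc} you already know $c_i\neq c_j$, hence $w_i\neq w_j$; the concatenation is therefore an \emph{open arc} from $w_i$ to $w_j$ of length $\sys(X)$, not a closed curve. Perturbing near $w$ shortens this arc slightly, but an arc of length just under $\sys(X)$ joining two Weierstrass points is no contradiction: projected to $X/\langle\tau\rangle$ it is a path from $c_i$ to $c_j$ of length just under $\sys(X)$, whereas a systolic Weierstrass arc has length $\sys(X)/2$. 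Equivalently, the closed curve you would build from such an arc (by taking its $\tau$-double) has length just under $2\,\sys(X)$, not under $\sys(X)$.

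Your pigeonhole analysis at $w$ is fine (three straight geodesics through a smooth $2\pi$ point do give six rays with the antipodal pairing you describe, and some cross-pair angle is necessarily $<\pi$), but it feeds into a surgery that does not produce a short nontrivial loop. This is exactly why the paper's proof is as long as it is: after assuming three arcs meet at $c$, one cuts $X$ along the three lifted systoles to obtain an annulus, analyses the core geodesic of that annulus in two separate cases (direct vs.\ indirect), and then carries out a substantial Euclidean-geometry argument inside a planar polygon to locate the remaining cone points and eventually force a Weierstrass arc of length $<\sys(X)/2$. There does not appear to be a short local argument at $w$ alone, because the obstruction is global: one needs to account for where $c_4, c_5, c^*$ sit relative to the three arcs.
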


The remainder of this section is devoted to the proof of Theorem \ref{prop:bivalent-direct}.
The proof is a complicated proof by contradiction that involves many cases. 
We suppose that there exist three direct systolic Weierstrass arcs 
that end at $c$. We cut along these arcs and we cut along the two 
(necessarily direct)
minimal arcs that join the remaining two angle $\pi$ cone points to the angle 
$4 \pi$ cone point on $X/ \langle \tau \rangle$. The result of these cuts is an annulus 
with piecewise geodesic boundary that contains the remnants of the cone points. 
The various cases considered are based on the holonomy of the translation
structure of the annulus as well as the relative positions of the cone points
on the boundary of the annulus. To obtain a contradiction in each case, we use
the fact that the distance between any two cone points can be no less than
$\sys(X)/2$. 

We now begin the proof of  Theorem \ref{prop:bivalent-direct}.

\begin{proof}
Suppose to the contrary that there exist three direct systolic Weierstrass 
arcs each having $c$ as an endpoint. Let $\theta_1 \leq \theta_2 \leq \theta_3$
denote the angles between the arcs at $c$. Since $c$ is an angle $\pi$ cone point, we have
$\theta_1 + \theta_2 + \theta_3 = \pi$. Label the arcs $\alpha_i$, $i \in \Zbb/3\Zbb$,
so that the angle between $\alpha_{i-1}$ and $\alpha_{i}$ equals $\theta_i$.
By Lemma \ref{lem:at_most_one_direst_arc}, the other endpoints of the $\alpha_i$
are all distinct. Label the other endpoint of $\alpha_i$ with $c_i$.
Let $c_4$ and $c_5$ denote the two remaining angle $\pi$ cone points.

The lift, $\talpha_i$, of each $\alpha_i$ to $X$ is a non-separating direct simple 
closed geodesic on $X$. The involution preserves $G:=\talpha_1 \cup \talpha_2 \cup \talpha_3$ 
and hence the complement $A:=X-G$.
We have $\chi(A) = \chi(X)- \chi(G)= 2-2=0$, and since $A$
contains the fixed points $c_4$ and $c_5$, it follows that
$A$ is connected and, moreover, is homeomorphic to an annulus. 

Let $\gamma$ be a shortest geodesic in $X$ that represents the free homotopy
class corresponding to a generator of $\pi_1(A) \subset \pi_1(X)$.
Because $\theta_i< \pi$ and each $\talpha_i$ is a geodesic, 
the geometric intersection number of $\gamma$ and each $\talpha_i$ is zero. 
In particular, $\gamma$ can not coincide with some
$\talpha_i$ as the intersection number $i(\talpha_i, \talpha_j) =1$ for $i \neq j$
(see Proposition \ref{prop:intersection-direct-W-arcs}). 
Therefore, $\talpha_i$ and $\gamma$ are disjoint for each $i \in \Zbb/3\Zbb$,
and $\gamma$ lies in $A$. 

In the remainder of the proof, we will consider separately 
the two cases: (1) the closed geodesic $\gamma$ is direct and 
(2) $\gamma$ passes through
an angle $4 \pi$ cone point. 

\underline{$\gamma$ is direct:} \vspace{.2cm} If $\gamma$ is direct, 
then it belongs to a maximal cylinder $C$. Without loss of generality, 
$\gamma$ is the middle geodesic of this cylinder. 
Since $\gamma$ is nonseparating, $\tau$ preserves $C$
and $\gamma$, and in particular, the fixed points $c_4$ and $c_5$
lie on $\gamma$. To obtain the desired contradiction in this case, 
it suffices to show that the length of $\gamma$ is less than $\sys(X)$.

Each component of $\partial C$ consists of a direct geodesic segment
$\beta_{\pm}$ joining an angle $4 \pi$ cone point $c_{\pm}^*$ to itself.
The geometric intersection number of $\beta_{\pm}$ and each
$\talpha_i$ equals zero, and hence $\beta_{\pm}$ does not
intersect any of the $\talpha_i$. 
Hence the complement $A - C$ consists of two topological 
annuli $K_+$ and $K_-$ with 
$\beta_{\pm} \subset \partial K_{\pm}$.
Because $\tau$ preserves each maximal cylinder as well as $A$,
we have $\tau(K_{\pm})= \tau(K_{\mp})$. Thus, we will now limit our attention to only one of the two annuli,
$K:=K_+$. One boundary component of $K$ is the direct 
geodesic segment $\beta:=\beta_+$ joining an angle 
$4 \pi$ cone point, $c^*:=c^*_+$, to itself.
The other boundary component, $\beta'$, of $K$ consists of
three geodesic segments $\oalpha_1$, $\oalpha_2$, and $\oalpha_3$ corresponding respectively
to $\talpha_1$, $\talpha_2$, and $\talpha_3$.
Moreover, the interior angle between $\oalpha_{i-1}$ and $\oalpha_{i}$ is
equal to $\theta_i$. 
See the left hand side of Figure \ref{Fig-A-annulus}.

\begin{figure}[h]
\leavevmode \SetLabels
\L(.38*.63) $c^{*}$\\%
\L(.23*.38) $\beta$\\%
\L(.31*.88) $\theta_{1}$\\%
\L(.17*.24) $\theta_{2}$\\%
\L(.45*.25) $\theta_{3}$\\%
\L(.20*.83) $\oalpha_1$\\%
\L(.31*.05) $\oalpha_2$\\%
\L(.426*.83) $\oalpha_3$\\%
\L(.603*.31) $\theta_{2}$\\%
\L(.837*.31) $\theta_{3}$\\%
\L(.615*.58) $\dev(\oalpha_1)$\\%
\L(.685*.187) $\dev(\oalpha_2)$\\%
\L(.77*.58) $\dev(\oalpha_3)$\\%
\endSetLabels
\begin{center}
\AffixLabels{\centerline{\epsfig{file =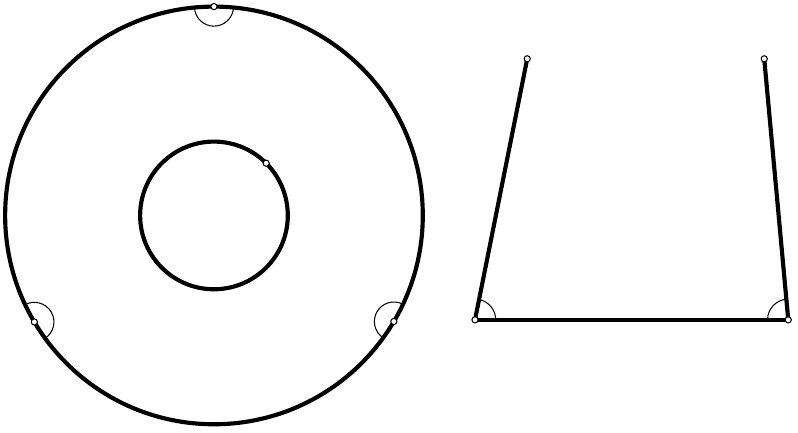,width=12cm,angle=0}}}
\vspace{-24pt}
\end{center}
\caption{On the left is the topological annulus $K$ case when the closed geodesic $\gamma$ is direct. The right side shows the development of 
$\beta'=\oalpha_1\cup \oalpha_2\cup \oalpha_3$. }\label{Fig-A-annulus}\end{figure}

Since $\beta$ and $\gamma$ are parallel geodesics in the same 
cylinder $C$, it suffices to show that the length of $\beta$ 
is less than $\sys(X)$.
Since $\beta$ is a direct geodesic segment, 
the length of $\beta$ equals the length of the holonomy 
vector associated to $\beta$. Since $\beta$ and $\beta'$
are homotopic, their holonomy vectors have the same length.
Thus, it suffices to show that the length of the holonomy
vector associated to $\beta'$ is less than $\sys(X)$.

Since, by assumption, each $\talpha_i$ is a systole, 
the length of $\beta'$ is $b:=3 \cdot \sys(X)$. 
Let $\beta':[0,b] \to \partial_{\pm}$ be a 
parameterization of $\beta'$ so that $\beta'(0)= \oalpha_3 \cap \oalpha_1= \beta'(1)$.
The development, $\obeta'$, consists of three 
line segments, each of length $\sys(X)$,
joined end to end with consecutive angles 
$\theta_2$ and $\theta_3$. See the right hand side 
of Figure \ref{Fig-A-annulus}.

Since $2 \pi/3 \leq \theta_2 + \theta_3< \pi$ and the three 
sides of $\obeta'$ have the same length, an elementary 
fact from Euclidean geometry applies to give that the distance between
$\dev(\beta'(0))$ and $\dev(\beta'(1))$ is less than $\sys(X)$.
Thus the holonomy vector of $\beta'$ has 
length less than $\sys(X)$ as desired. 

\underline{$\gamma$ is indirect:} \hspace{.2cm}In the remainder of the proof we consider the case in which $\pi_1(A)$ is not 
generated by a direct simple closed geodesic. In this case, 
the shortest geodesic $\gamma$ that generates $\pi_1(A)$ is 
unique in its homotopy class. In particular, since
$\tau$ induces a nontrivial 
automorphism of $\pi_1(A) \cong \Zbb$,
the isometry $\tau$ preserves $\gamma$ and reverses its orientation. It follows that $\gamma$ is a union of two geodesic segments each joining
the two $4\pi$ angle cone points, and each segment 
contains as its midpoint one of the remaining
two Weierstrass points. Let $\sigma_+$ denote the 
segment containing $c_4$, and let $\sigma_-$ 
denote the segment containing $c_5$.

The complement of $\gamma$ consists of two topological annuli $K_+$ and $K_-$
that are isometric via $\tau$. We limit our attention to one of the annuli, $K$.
One boundary component
of $K$ consists of the geodesic segments 
$\oalpha_1$, $\oalpha_2$, and $\oalpha_3$ with
the interior angle between $\oalpha_{i-1}$ and $\oalpha_{i}$ equal to $\theta_i$.
The other boundary component consists 
of $\sigma_+$ and $\sigma_-$. See Figure 
\ref{Fig-A-annulus-2}.
 
Let $c_+^*$ and $c_-^*$ denote the angle $4 \pi$ cone points. Let $\theta_{\pm}$ denote the
interior angle between $\sigma_+$ and $\sigma_-$ at $c_{\pm}^*$.
Because $\tau$ interchanges the two components of $A -\gamma$,
we have $\theta_+ + \theta_- = 4 \pi$. Since $\gamma$ is not direct, there is no
direct geodesic segment joining $c_4$ and $c_5$ inside $K$. Indeed, if there were such a
segment $\delta$, then $\delta \cup \tau(\delta)$ would be a direct simple closed geodesic
that generates $\pi_1(A)$ contradicting our assumption. It follows that $\theta_{\pm} \geq \pi$.

\begin{figure}[h]
\leavevmode \SetLabels
\L(.456*.40) $c^{*}_{-}$\\%
\L(.523*.578) $c^{*}_{+}$\\%
\L(.337*.70) $c_1$\\
\L(.49*.04) $c_2$\\%
\L(.64*.72) $c_3$\\%
\L(.435*.66) $c_4$\\%
\L(.55*.335) $c_5$\\%
\L(.49*1.02) $v_1$\\
\L(.310*.21) $v_2$\\%
\L(.67*.21) $v_3$\\%
\L(.49*.88) $\theta_{1}$\\%
\L(.35*.29) $\theta_{2}$\\%
\L(.63*.3) $\theta_{3}$\\%
\L(.56*.65) $\theta_{+}$\\%
\L(.425*.31) $\theta_{-}$\\%
\L(.375*.933) $\oalpha_1$\\%
\L(.389*.12) $\oalpha_2$\\%
\L(.665*.5) $\oalpha_3$\\%
\endSetLabels
\begin{center}
\AffixLabels{\centerline{\epsfig{file =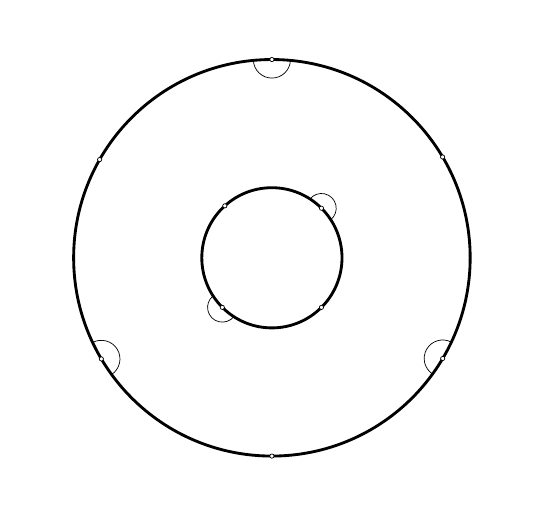,width=6cm,angle=0}}}
\vspace{-24pt}
\end{center}
\caption{The topological annulus $K$. \label{Fig-A-annulus-2}}
\end{figure}

We claim that $\theta_1< \pi/3$.
Indeed if not, then since $\theta_1 + \theta_2+\theta_3= \pi$ 
and $\theta_1\leq \theta_2 \leq \theta_3$ , we would have
$\theta_i= \pi/3$ for each $i$ and in particular, 
the holonomy of $\beta=\talpha_1 \cup \talpha_2 \cup \talpha_3$ 
would be zero. Thus, since $\sigma_+ \cup \sigma_-$ 
is homotopic to $\beta$, the holonomy of $\sigma_+ \cup \sigma_-$
would be trivial. Since $\sigma_{\pm}$ is a geodesic segment, 
the angle at $c_{\pm}^*$ would equal $2\pi$ and the 
lengths of $\sigma_+$ and $\sigma_-$ would be equal. 
It would follow that the developing map would
map $\overline{K}$ onto the an equilateral triangle $T$
having sidelength $\sys(X)$. Moreover, 
$\dev(\sigma_+)= \dev(\sigma_-)$ would be a segment $\sigma$ in 
the interior of $T$ and the restriction of $\dev$ to 
$\overline{K} - (\sigma_+ \cup \sigma_-)$ would be injective. 
By elementary Euclidean geometry, the distance from each 
interior point of $T$ to the set of midpoints of the sides of 
$T$ is less than $\sys(X)/2$. In particular, it would
follow that there would be a direct 
geodesic segment in $\overline{K}$ joining the 
set $\{c_4, c_5\}$ and $\{c_1,c_2,c_3 \}$ having length 
less than $\sys(X)/2$. This would 
contradict the definition of $\sys(X)$.

Thus, in the remainder of the proof of Theorem \ref{prop:bivalent-direct}, 
we may assume that $\theta_1 < \pi/3$. Our next goal is the show that this
implies that there exists a direct geodesic that joins $v_1$
to one of the two $4\pi$ cone points, $c_{\pm}^*$. 

\begin{lem}  \label{lem:short-on-top}
There exists a (direct) geodesic segment $\delta \subset K$ that joins $v_1$ to either 
$c_+^*$ or $c_-^*$. 
\end{lem}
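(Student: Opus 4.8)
\textbf{Proof proposal for Lemma \ref{lem:short-on-top}.}

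The plan is to analyze the developing image of the annulus $\overline{K}$, exploiting that the two boundary arcs $\oalpha_1\cup\oalpha_2\cup\oalpha_3$ (carrying $c,c_1,c_2,c_3$, with $c$ the angle-$\pi$ remnant split into the vertices $v_1,v_2,v_3$ where the $\talpha_i$ meet) and $\sigma_+\cup\sigma_-$ (carrying $c_+^*,c_-^*,c_4,c_5$) are each piecewise geodesic of total length $3\sys(X)$, and that $\theta_+,\theta_- \geq \pi$ with $\theta_++\theta_-=4\pi$. First I would cut $\overline{K}$ along a shortest arc joining the two boundary components to obtain a simply connected piecewise-geodesic polygon, and develop it into $\Cbb$ via $\dev$; since $\overline K$ minus the cut contains no cone points in its interior (the $4\pi$ cone points and the Weierstrass points all lie on the boundary), the developing map is an isometry onto a Euclidean polygon $\Pi$. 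The vertex $v_1$ is the point on the $\oalpha$-side where $\oalpha_1$ and $\oalpha_3$ meet at the small angle $\theta_1 < \pi/3$.

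The key geometric step is then to show that from $v_1$ one can reach $\{c_+^*,c_-^*\}$ by a straight segment staying inside $\Pi$. Because the interior angle of $\Pi$ at $v_1$ is $\theta_1 < \pi/3$, the two edges $\dev(\oalpha_1)$ and $\dev(\oalpha_3)$ emanating from $\dev(v_1)$ enclose a very thin wedge, and the opposite boundary (the image of $\sigma_+\cup\sigma_-$, reflex at $c_\pm^*$ since $\theta_\pm\geq\pi$) must lie across this wedge. I would argue that the segment from $\dev(v_1)$ to the image of the vertex $c_+^*$ or $c_-^*$ cannot exit $\Pi$ through either adjacent edge (that would force an angle $\geq\pi$ contradiction at $v_1$ given $\theta_1<\pi/3$ and the lengths involved), and that the reflex vertices $c_\pm^*$ on the far side are precisely the points ``visible'' from $\dev(v_1)$ — essentially because a reflex vertex of a polygon is visible from any point in a neighborhood opposite to it, and the thinness of the $v_1$-wedge places $\dev(v_1)$ in such a position. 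Concretely: walking along $\sigma_+\cup\sigma_-$ from one end, the first cone point of angle $>2\pi$ encountered cuts off a sub-polygon containing $\dev(v_1)$ in which the segment to that cone point is unobstructed. Pulling this segment back through $\dev^{-1}$ gives the desired geodesic $\delta\subset K$ from $v_1$ to $c_+^*$ or $c_-^*$.

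The main obstacle I expect is controlling visibility rigorously: one must rule out that $\dev(\sigma_+)$ or $\dev(\sigma_-)$ folds back in a way that both $c_\pm^*$ are hidden from $\dev(v_1)$ behind an intervening portion of boundary, and one must ensure $\delta$ avoids the slit made by the cut. Here the constraints $\theta_1<\pi/3$, $\theta_\pm\geq\pi$, and the equal-length/length-$3\sys(X)$ data should combine to pin down the shape of $\Pi$ enough that such folding is impossible — likely by a case analysis on which of $\theta_1,\theta_2,\theta_3$ and which of $\sigma_+,\sigma_-$ are involved, bounding the relevant distances by $\sys(X)$ and invoking that no two cone points are closer than $\sys(X)/2$ to eliminate the degenerate configurations. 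Once a clean visibility statement is isolated, the conclusion is immediate: the straight segment in $\Pi$ from $\dev(v_1)$ to the chosen $\dev(c_\pm^*)$ develops back to the required direct geodesic $\delta$.
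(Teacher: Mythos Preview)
Your proposal has a genuine gap at the very first step. You assert that after cutting $\overline{K}$ along a shortest arc between the two boundary components, the developing map is an isometry onto a planar polygon $\Pi$, on the grounds that the interior of $K$ contains no cone points. But the absence of interior cone points only guarantees that $\dev$ is a local isometry (an immersion); injectivity can fail because of the boundary geometry. Here the interior angles of $K$ at $c_+^*$ and $c_-^*$ satisfy $\theta_+,\theta_-\geq\pi$ and $\theta_++\theta_-=4\pi$, so one of them may well exceed $2\pi$. At such a vertex the developed boundary wraps around and the image is not an embedded polygon. Once $\Pi$ is only immersed, your entire ``visibility'' discussion (straight segments staying inside $\Pi$, reflex vertices being visible from across the polygon, walking along the far side) loses its meaning, and the cut you introduced becomes an additional obstruction you never control.

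The paper avoids this difficulty by never attempting to develop all of $K$. Instead it develops only the set $V$ of points directly visible from $v_1$ inside $K$; since the angle at $v_1$ is $\theta_1<\pi/3$, this set maps injectively into a Euclidean sector of angle $\theta_1$, and in fact the relevant part lies in the isoceles triangle $T$ with apex $\ov_1$ and base vertices $\oc_1,\oc_3$ at distance $\sys(X)/2$. The key idea you are missing is to take $x^*$ to be the point of $\overline{S\setminus V}$ closest to $\ov_1$, i.e.\ the first obstruction to visibility. One then argues: (i) $x^*$ lies in the interior of $T$, else the base $\overline{\oc_1\oc_3}$ would give a direct arc of length $<\sys(X)/2$ between $c_1$ and $c_3$; (ii) $x^*$ is not on $\dev(\talpha_2)$ by a distance estimate; (iii) $x^*$ cannot be an interior point of $\dev(\sigma_\pm)$, because by minimality the segment $\overline{\ov_1x^*}$ would be perpendicular to $\dev(\sigma_\pm)$, forcing the midpoint $c_4$ or $c_5$ of $\sigma_\pm$ to lie in $T$ and hence within $\sys(X)/2$ of $v_1$. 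The only remaining possibility is that $x^*$ is the image of $c_+^*$ or $c_-^*$, which yields the direct segment $\delta$. This closest-obstruction argument is what replaces your hoped-for case analysis, and it works precisely because it never needs the global development of $K$ to be embedded.
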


\begin{proof}[Proof of Lemma \ref{lem:short-on-top}]
Let $V$ be the set of points $x \in K$ such that
there exists a direct geodesic segment in $K$ joining $v_1$ to $x$.
By lifting to $\tX$ and applying the developing map, the set $V$
is mapped injectively onto a subset of the Euclidean sector $S$ 
of angle $\theta_1$. In particular, $v_1$ is mapped to the 
vertex $\ov_1$ of $S$. The bounding rays of $S$ 
contain the respective 
images, $\oc_1$ and $\oc_3$, of the points $c_1$ and $c_3$.

Let $T$ be the convex hull of $\{\ov_1, \oc_1, \oc_3\}$
The set $T$ is an isoceles 
triangle with $|\ov_1 \oc_1| = \sys(X)/2= |\ov_1 \oc_3|$,
and the angle $\angle \oc_1 \ov_1 \oc_3$ is less than $\pi/3$. 
In particular, the side of $T$ that joins $\oc_1$ and $\oc_3$ 
has length less than $\sys(X)/2$, and the distance from
$\ov_1$ to any other point of $T$ is at most $\sys(X)/2$.

Let $x^* \in \overline{S-V}$ be a point such that $\dist(x^*,v_1)$ 
equals the distance between $\ov_1$ and the $\overline{S-V}$. 
We claim that $x^*$ is the image of an angle $4\pi$ cone point,
and hence that there exists a direct geodesic joining $v_1$ and
this angle $4 \pi$ cone point. See Figure \ref{fig:triangle-T}.

\begin{figure}
\begin{center}
\includegraphics[width=6cm]{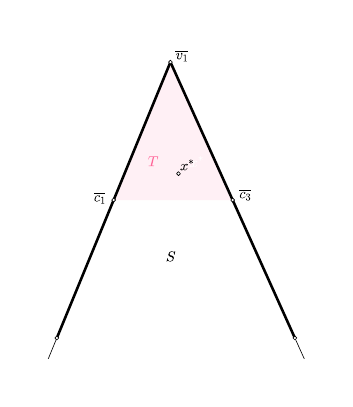}
\end{center}
\caption{The point $x^*$ in the triangle $T$. \label{fig:triangle-T}}
\end{figure}

To verify the claim, we first note that $x^*$ lies in the interior of $T$. 
Indeed if it did not, then since the developing map is injective
on $V$, the side of $T$ that joins $\oc_1$ to $\oc_3$ 
would be the image of a direct geodesic segment joining 
$c_1$ and $c_3$ having length less than $\sys(X)/2$.
This would contradict the definition of $\sys(X)$.

Because $\theta_1 \leq \theta_2 \leq \theta_3$, the distance 
between $v_1$ and $\talpha_2$ is at least $\sys(X)/2$, and 
hence the point $x^*$ can not belong to $\dev(\talpha_2)$.
Thus, $x^*$ is the image of a point in $\sigma_+$ or 
$\sigma_-$. Thus to verify the claim, 
it suffices to show that $x^*$ is not the image 
of an interior point of $\sigma_{\pm}$. 

Suppose to the contrary that $x^*$ were the image of an interior point
$\sigma_{\pm}$. Then the segment $\dev(\sigma_{\pm})$ would 
lie in $\overline{S-V}$, and hence by the definition of
$x^*$, the segment $\dev(\sigma_{\pm})$ would be perpendicular
to the segment joining $v_1$ and $x^*$, and hence 
parallel to the side of $T$ that opposes $v_1$. 
The segment $\dev(\sigma_{\pm})$ does not 
intersect either $\dev(\talpha_1)$ or $\dev(\talpha_3)$,
and hence the midpoint of $\dev(\sigma_{\pm})$ would lie 
in $T$. The segment joining the midpoint and $\ov_1$
corresponds to a direct geodesic segment joining $v_1$ 
to either $c_4$ or $c_5$. Since this segment has length less 
than $\sys(X)/2$, we would obtain a contradiction. 

Thus, $x^*$ is the image of either $c_-^*$ or $c_+^*$.
\end{proof}

By relabeling if necessary, we may assume that 
$\dev(c_+^*)= x^*$. Let $\delta$ denote the direct 
geodesic joining $v_1$ and $c_+^*$.

Let $P$ denote the metric completion of $K-\delta$. 
The metric space $P$ is a topological disk bounded by seven geodesic segments.
The `polygon' $P$ has seven vertices: the points $v_2$ and $v_3$, 
two vertices, $p_{+}$ and $p_-$, corresponding to $c_+^*$,
one vertex, $q$, corresponding to $c_-^*$,  and
two vertices, $v_+$ and $v_-$, corresponding to $v_1$. 
See Figure \ref{fig:polygon-P}.

\begin{figure}
\begin{center}\includegraphics[width=6cm]{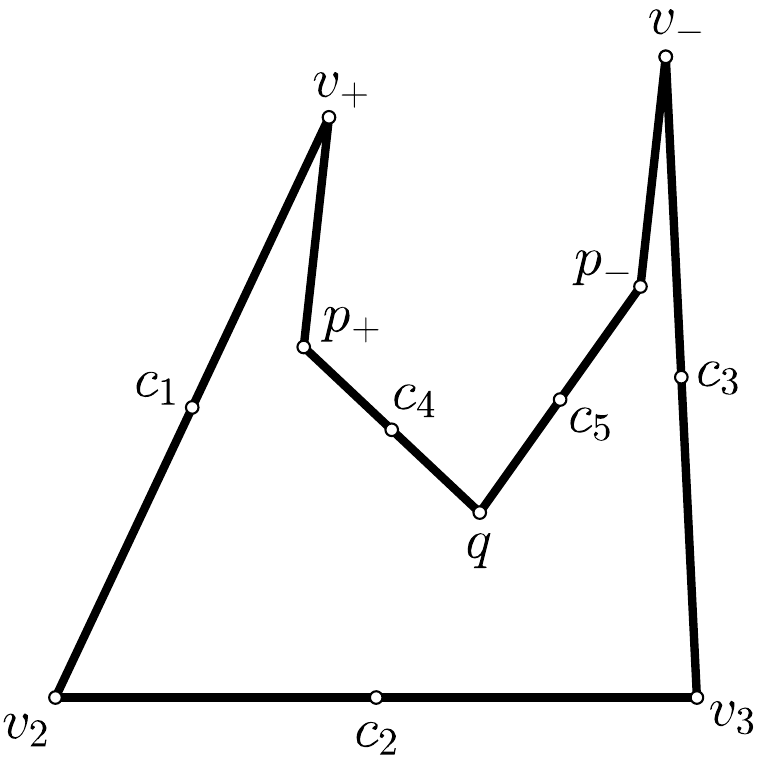}
\end{center}
\caption{The polygon $P$. \label{fig:polygon-P}}
\end{figure}

Continuing with our proof of Theorem \ref{prop:bivalent-direct}, our next goal
is to prove that $P$ may be regarded as a polygon in the plane.
In particular, we wish to show that the restriction of the developing map to $P$ is 
injective.\footnote{One may regard $P$ as a subset of $\tX$ by lifting
its interior and then taking the closure.}  

First, note that since the geodesics $\talpha_i$ all have the same length
and the sum of the angles $\theta_2+ \theta_3$ is strictly larger than $\pi/3$,
the set $\dev(\talpha_1 \cup \talpha_2 \cup \talpha_3)$ is a simple piecewise 
linear arc in the plane with endpoints $\ov_+$ and $\ov_-$ corresponding to 
$v_+$ and $v_-$ respectively. In particular, the convex hull of $\{\ov_+, \ov_2,\ov_3, \ov_-\}$
is a quadrilateral $Q$, and the line segments $\oalpha_i:= \dev(\talpha_i)$ 
constitute three of the sides of $Q$. 

Let $\delta_{\pm} \subset P$ be the segment that joins $v_{\pm}$ and $p_{\pm}$,
and let $\odelta_{\pm}: \dev(\delta_{\pm})$.  Since $\theta_1< \pi/3$,
the angle between $\talpha_1$ and $\delta_+$ and the angle between $\talpha_3$
and $\delta_-$ are both less than $\pi/3$. It follows that the segment 
$\odelta_{\pm}$ lies in $Q$ and that the point  $p_{\pm}$ lies in the interior of $Q$.

Let $\theta_z$ denote the interior angle at a vertex  $z$ of  $P$.

\begin{lem} \label{lem:interior-angles}
We have $\pi<\theta_q < 2 \pi$,  $\theta_{p_{\pm}} < 2 \pi$, and 
$\theta_{p_+} + \theta_{q}+\theta_{p_{-}} =  4\pi$.
\end{lem}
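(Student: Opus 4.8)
The plan is to extract the three angle identities from the two global angle relations already established, namely $\theta_1 + \theta_2 + \theta_3 = \pi$ at $c$ (which is an angle $\pi$ cone point) and $\theta_+ + \theta_- = 4\pi$ at the pair $c_\pm^*$, together with the fact that the cut along $\delta$ splits the $4\pi$ cone point $c_+^*$ into the two vertices $p_+$ and $p_-$ while leaving the $4\pi$ cone point $c_-^*$ intact as the vertex $q$. First I would set up bookkeeping for how the total cone angle at each cone point of $K$ is distributed among the vertices of $P$. The vertex $q$ carries exactly the portion of the $4\pi$ angle at $c_-^*$ that lies in $K$, which is $\theta_-$ in the notation of the excerpt (the interior angle between $\sigma_+$ and $\sigma_-$ at $c_-^*$ measured inside $K$), so $\theta_q = \theta_-$. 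Since we established $\theta_\pm \geq \pi$ and $\theta_+ + \theta_- = 4\pi$, and since $\theta_\pm < 4\pi$ would need to be upgraded: in fact $\theta_- \geq \pi$ gives the lower bound, and $\theta_- \leq 3\pi$ (since $\theta_+ \geq \pi$) still is not $< 2\pi$, so I need a sharper input. The sharper input is Lemma \ref{lem:short-on-top}: we cut $\delta$ off of $c_+^*$, not $c_-^*$, precisely because the relevant geometry forces $\theta_-$ (the $K$-angle at $c_-^*$) to be the \emph{smaller} of the two, hence $\theta_- \leq 2\pi$; strictness $\theta_- < 2\pi$ follows because equality $\theta_+ = \theta_- = 2\pi$ would make $\sigma_\pm$ straight across $c_\pm^*$, reducing to the degenerate equilateral-triangle configuration already excluded in the proof of Theorem \ref{prop:bivalent-direct} when $\theta_1 = \pi/3$ — and more carefully, one checks $\delta$ would then have been constructible at $c_-^*$ as well, contradicting the choice forced by the distance-minimization in Lemma \ref{lem:short-on-top}.

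Next I would handle the two vertices $p_+$ and $p_-$. Cutting along $\delta$ opens the cone point $c_+^*$ of angle (in $K$) equal to $\theta_+$ along the two sides of the slit $\delta$, so the angles at $p_+$ and $p_-$ are the two "halves" into which $\delta$ divides the sector at $c_+^*$; thus $\theta_{p_+} + \theta_{p_-}$ equals the full $K$-angle at $c_+^*$ \emph{plus} the contribution of the newly created straight angles along $\delta$ — no: cutting along an interior segment does not add angle, it merely separates the existing angle into two parts along the cut, and since $\delta$ ends at the \emph{smooth} interior point $v_1$ on the other end, the sum $\theta_{p_+} + \theta_{p_-}$ is exactly $\theta_+$, the $K$-portion of the $4\pi$ angle at $c_+^*$. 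Wait — I must double check whether $\delta$ reaches $c_+^*$ from within the single cone sector at $c_+^*$ that lies in $K$, or whether $c_+^*$ contributes its full $4\pi$; since $c_+^*$ lies on $\partial K$ between $\sigma_+$ and $\sigma_-$, its $K$-angle is $\theta_+$, and $\delta$ emanates into that sector, so indeed $\theta_{p_+} + \theta_{p_-} = \theta_+$. But the claimed identity in the lemma is $\theta_{p_+} + \theta_q + \theta_{p_-} = 4\pi$, which with $\theta_q = \theta_-$ becomes $\theta_+ + \theta_- = 4\pi$ — exactly the relation $\theta_+ + \theta_- = 4\pi$ derived earlier from $\tau$ interchanging the two components of $A - \gamma$. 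This gives the third identity for free.

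For the individual inequalities $\theta_{p_\pm} < 2\pi$: having established $\theta_{p_+} + \theta_{p_-} = \theta_+ < 3\pi$ is not yet enough, so I would argue each separately using that $\delta$ is a \emph{geodesic} segment ending at $v_1$. The angle that $\delta$ makes with $\talpha_1$ at $v_1$ and the angle with $\talpha_3$ at $v_1$ sum to $\theta_1 < \pi/3$ (they are the two sub-angles of the sector $S$ at $v_1$, cut by $\delta$), and these control the positions of $p_\pm$ inside the quadrilateral $Q$ exactly as in the paragraph preceding the lemma. Concretely, $\theta_{p_+}$ is the interior angle of $P$ at one endpoint of the slit; because $\delta$ lands at $c_+^*$ from the "short side" guaranteed by Lemma \ref{lem:short-on-top}, and because the segment $\delta_\pm$ joining $v_\pm$ to $p_\pm$ lies in $Q$ with $p_\pm$ interior to $Q$, the angle at $p_\pm$ in the polygon is a reflex-free angle of a genuinely embedded planar polygon; if it were $\geq 2\pi$ the developing map would fail to be injective on $P$, contradicting the embeddability of $P$ as a planar polygon (which is the object of the paragraph in which this lemma sits). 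The strict inequalities $\theta_q > \pi$ and $\theta_q < 2\pi$ then come from $\theta_q = \theta_-$ with $\theta_- \geq \pi$ (shown) and $\theta_- < 2\pi$; and if $\theta_- = \pi$ exactly, then $\sigma_+ \cup \sigma_-$ would be straight at $c_-^*$, making $\delta' := $ the other slit constructible and contradicting either Lemma \ref{lem:at_most_one_direst_arc} applied to the resulting geometry or the minimality of $\gamma$. The main obstacle I anticipate is the strictness of $\theta_{p_\pm} < 2\pi$: the crude angle-sum bounds only give $\leq$, and pinning down strictness requires carefully invoking the injectivity of the developing map on $P$ together with the genuine two-dimensionality of $P$ near $p_\pm$ (i.e., that $P$ is not "pinched" there), which in turn leans on $\theta_1 < \pi/3$ and the placement of $\delta_\pm$ inside $Q$ — so I would prove embeddability of $P$ first (or in tandem) and read the strict angle bounds off that.
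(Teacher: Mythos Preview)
Your bookkeeping for the sum identity is correct: since the cut $\delta$ lands at $c_+^*$ and not at $c_-^*$, the vertex $q$ carries the full $K$-angle $\theta_-$ at $c_-^*$, while $\theta_{p_+}+\theta_{p_-}=\theta_+$; hence $\theta_{p_+}+\theta_q+\theta_{p_-}=\theta_++\theta_-=4\pi$. This part matches the paper.

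The strict inequalities, however, are not established by your plan.

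\textbf{For $\theta_q<2\pi$.} You assert that Lemma~\ref{lem:short-on-top} forces $\theta_-$ to be the smaller of $\theta_+,\theta_-$, hence $\theta_-\le 2\pi$. That does not follow: the lemma only says that the nearest boundary obstruction to $v_1$ (the point $x^*$ in the triangle $T$) is one of the two angle-$4\pi$ cone points, and after relabeling we call that one $c_+^*$. This is a statement about which cone point is \emph{visible} (and closest) from $v_1$, not about which of $\theta_+,\theta_-$ is larger. The paper instead derives $\theta_q<2\pi$ only \emph{after} proving $\theta_q>\pi$, by observing that then $\oq$ lies on the far side of the line through $\op_+$ and $\op_-$, so $\theta_q=2\pi-\psi$ where $\psi>0$ is the angle at $\oq$ in the nondegenerate triangle $\op_+\oq\op_-$.

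\textbf{For $\theta_q>\pi$.} From the earlier argument you only have $\theta_-\ge\pi$; your proposed way to rule out equality (``$\sigma_+\cup\sigma_-$ would be straight at $c_-^*$, making another slit constructible\ldots'') is not made precise and does not obviously lead to a contradiction. The paper's argument is entirely different and geometric: if $\theta_q\le\pi$ then all interior angles of $P$ other than $p_\pm$ are $\le\pi$, so the segments $\overline{p_+p_-}$ and $\overline{c_4c_5}$ are straight in the development; since $\overline{\op_+\op_-}$ is a translate of $\overline{\ov_+\ov_-}$ and $|\ov_+\ov_-|<\sys(X)$, one gets $|c_4c_5|=\tfrac12|\op_+\op_-|<\sys(X)/2$, contradicting the systole.

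\textbf{For $\theta_{p_\pm}<2\pi$.} Your argument is circular: you propose to read these off from the embeddability of $P$ as a planar polygon, but that embeddability is precisely the Proposition proved \emph{using} this lemma. The paper instead uses the already-established $\theta_q>\pi$: once $\oq$ is on the far side of the line $\overleftrightarrow{\op_+\op_-}$ from $\ov_\pm$, the interior angle at $p_\pm$ is bounded above by the angle between $\odelta_\pm$ and $\overline{\op_+\op_-}$, which is visibly less than $2\pi$.

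In short, the paper's proof pivots on a single direct contradiction (if $\theta_q\le\pi$ then $|c_4c_5|<\sys(X)/2$), from which all three inequalities cascade via elementary planar geometry in the developed picture. Your plan does not contain that key step, and the substitutes you propose either do not follow from the stated hypotheses or invoke the very consequence this lemma is meant to enable.
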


\begin{proof}
Suppose to the contrary that $\theta_q \leq  \pi$. 
Since the angles $\theta_{v_+}, \theta_{v_-}, \theta_2, \theta_3$ are all
less than $\pi$, the shortest path from $p_+$ to $p_-$ and the shortest path from $c_4$ to $c_5$
are both direct.  Because $\theta_2 + \theta_3 <\pi$, the Euclidean distance 
$|\op_+ \op_-|= |p_+p_-|$ is strictly less 
than $|\ov_2 \ov_3| = \sys(X)$. Since $c_4$ and $c_5$ are midpoints, it follows that 
$|c_4c_5|= |\oc_4\oc_5|< \sys(X)/2$. This is a contradiction. 

It follows that the point $\oq:=\dev(q)$ lies in the closed half-plane 
bounded by the line through $\op_+$ and $\op_-$ that does not contain $\ov_+$ or $\ov_-$. 
Hence, the angle $\theta_{p_{\pm}}$ is at most the angle between $\odelta_{\pm}$ 
and $\overline{p_+p_-}$, and this is less than $2 \pi$. The angle $\theta_q$
equals $2 \pi - \psi$ where $\psi$ the angle opposite the segment $\op_+\op_-$
in the (perhaps degenerate)  triangle $\op_+ \oq \op_-$.

As discussed in the analysis of Figure \ref{Fig-A-annulus-2}, 
we have $\theta_++\theta_- = 4\pi$. It follows that  
$\theta_{p_+} + \theta_q+ \theta_{p_-}= 4 \pi$.  
\end{proof}

\begin{prop}
The metric space $P$ is isometric to a simply connected polygon in the Euclidean plane.
\end{prop}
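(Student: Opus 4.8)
The reason Lemma~\ref{lem:interior-angles} was established first is that it makes \emph{every} interior angle of $P$ strictly less than $2\pi$: the angles at $p_+,q,p_-$ are controlled by Lemma~\ref{lem:interior-angles}, the angles at $v_+$ and $v_-$ are at most $\theta_1<\pi/3$, and the angles at $v_2$ and $v_3$ equal $\theta_2$ and $\theta_3$, both less than $\pi$. Hence $P$ contains no cone point of the translation structure: the two $4\pi$ cone points and the $\pi$ cone point $c$ have been opened up by the cuts into these subangles, and every other Weierstrass point is a smooth point. Consequently $\mathrm{int}(P)$ is a simply connected subsurface of $X\setminus\{c_+,c_-\}$, so it lifts to $\tX$, and composition with $\dev\colon\tX\to\Cbb$ gives a local isometry $\dev\colon P\to\Cbb$ which is an orientation-preserving local homeomorphism on $\mathrm{int}(P)$ and is continuous up to $\partial P$. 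Given this, the Proposition reduces to showing that $\dev$ maps $\partial P$ homeomorphically onto a simple closed polygon $\Gamma\subset\Cbb$: once that is known, a standard degree argument applies --- the number of $\dev$-preimages of a point in the bounded complementary region of $\Gamma$ equals the winding number of $\dev|_{\partial P}$ about that point, which is $1$ because $\Gamma$ is a Jordan curve traversed once and $\dev$ is an orientation-preserving local homeomorphism --- and it follows that $\dev$ restricts to an isometry from $P$ onto the closed polygonal region bounded by $\Gamma$.

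So the task is to show that the closed polygonal path through the developed vertices $\ov_+,\ov_2,\ov_3,\ov_-,\op_-,\oq,\op_+$ (returning to $\ov_+$), with successive sides $\oalpha_1,\oalpha_2,\oalpha_3,\odelta_-,\dev(\sigma_-),\dev(\sigma_+),\odelta_+$, is a simple closed curve. Much of the configuration is already pinned down: the subpath $\ov_+\to\ov_2\to\ov_3\to\ov_-$ runs along three consecutive sides of the convex quadrilateral $Q=\mathrm{conv}\{\ov_+,\ov_2,\ov_3,\ov_-\}$, hence is a simple arc in $\partial Q$; the segments $\odelta_\pm$ lie in $Q$ and, since they meet $\oalpha_1$, resp.\ $\oalpha_3$, at an angle less than $\pi/3$, they touch $\partial Q$ only at $\ov_\pm$, with the other endpoints $\op_\pm$ in the interior of $Q$; and Lemma~\ref{lem:interior-angles} places $\oq$ in the closed half-plane bounded by the line through $\op_+$ and $\op_-$ that excludes $\ov_+$ and $\ov_-$, while guaranteeing that $\op_+,\oq,\op_-$ form a nondegenerate triangle (so, in particular, the seven developed vertices are distinct). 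It therefore remains to show that the ``detour'' $\ov_-\to\op_-\to\oq\to\op_+\to\ov_+$ is a simple arc meeting the convex arc $\oalpha_1\cup\oalpha_2\cup\oalpha_3$ only at its endpoints $\ov_\pm$.

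For this I would invoke the systolic distance bounds. The midpoints of the five segments $\oalpha_1,\oalpha_2,\oalpha_3,\sigma_+,\sigma_-$ are developments of the five Weierstrass points $c_1,\dots,c_5$, and any two Weierstrass points of $X$ lie at distance at least $\sys(X)/2$, since otherwise a path realizing a smaller distance yields a loop of length less than $\sys(X)$ --- the recurring surgery argument of Remark~\ref{remk:surgery}. Transported to the development, these inequalities, combined with the bound $\theta_q>\pi$ (which bounds how sharply the path may turn at $\oq$) and with the established fact that $\op_+$ and $\op_-$ lie close to the sides $\oalpha_1$ and $\oalpha_3$ of $Q$, should force each of $\dev(\sigma_+)$ and $\dev(\sigma_-)$ to remain on the side of the arc $\oalpha_1\cup\oalpha_2\cup\oalpha_3$ occupied by $\mathrm{int}(P)$ and prevent any crossing among $\odelta_+,\dev(\sigma_+),\dev(\sigma_-),\odelta_-$. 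The main obstacle is exactly this last step: the segments $\dev(\sigma_\pm)$ are the least-constrained parts of $\Gamma$ and, a priori, $\oq$ need not even lie inside $Q$, so one must feed the half-plane location of $\oq$ from Lemma~\ref{lem:interior-angles} into the midpoint-distance estimates in order to eliminate the borderline configurations. Once $\Gamma$ is seen to be a simple closed polygon, the degree argument above completes the proof.
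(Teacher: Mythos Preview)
Your overall strategy---show that $\dev|_{\partial P}$ is a simple closed curve and then invoke a degree argument---is a legitimate route, and in fact the authors sketch exactly this approach in an alternative proof appended after the paper. However, your write-up leaves the crucial step unproved: you explicitly flag the simplicity of the detour $\ov_-\to\op_-\to\oq\to\op_+\to\ov_+$ as ``the main obstacle'' and propose to resolve it with systolic lower bounds on distances between developed Weierstrass points. That proposal is problematic. The developing map is $1$-Lipschitz from $(P,d_P)$ to $(\Cbb,|\cdot|)$, so a lower bound $d_P(c_i,c_j)\geq \sys(X)/2$ does \emph{not} yield $|\dev(c_i)-\dev(c_j)|\geq \sys(X)/2$. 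To convert an intrinsic lower bound into a Euclidean one you would need to know that the straight segment in $\Cbb$ is the image of a path in $P$, which is precisely the injectivity you are trying to prove. So as written the argument is circular, and the gap is genuine rather than merely unfinished.

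The paper's own proof sidesteps this entirely and is worth internalizing. It never touches systolic estimates: it takes any two points $x,x'\in P$, joins them by a minimizing geodesic $\eta$ in $P$, and shows $\dev\circ\eta$ is a simple arc. The only vertices at which $\eta$ can break are $p_+,q,p_-$ (the others have interior angle $<\pi$), and Lemma~\ref{lem:interior-angles} gives $\theta_{p_+}+\theta_q+\theta_{p_-}=4\pi$ with each summand $<2\pi$. From this one checks, by elementary plane geometry, that a piecewise-linear arc with at most three breakpoints, each interior angle in $[\pi,2\pi)$, and total interior angle at most $4\pi$ (equivalently, total signed turning in $(-\pi,0]$) cannot self-intersect. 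This argument uses only the angle data already established and avoids any appeal to distances in the developed picture.
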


\begin{proof}
It suffices to show that the developing map is injective. 
Let $x, x' \in P$. Since $P$ is path connected and compact,
there exists a minimal geodesic arc $\eta$ that joins $x$ to $x'$. 
To prove the claim it suffices to show that the endpoints of $\dev \circ \eta$ 
are distinct. If $\eta$ is a direct geodesic segment, then 
$\dev \circ \eta$ is a single Euclidean line segment and so 
$\dev(x) \neq \dev(x')$. If $\eta$ is not direct, then $\eta$
is a concatenation of a finite number direct geodesic segments, 
$\gamma_1, \ldots, \gamma_n$, such that $\gamma_i \cap \gamma_{i+1}$
is a vertex $v_i$ and the angle $\psi_i$ 
between $\gamma_i$ and $\gamma_{i+1}$ satisfies 
$\pi \leq \psi_i \leq \theta_v$ where $\theta_v$ is the angle between 
boundary segments at $v$. Since the angles at $v_{\pm}$, $v_2$, and 
$v_3$ are less than $\pi$, the minimal geodesic $\eta$ can only pass through
the vertices $p_+$, $p_-$, or $q$, and if $\eta$ does pass through
one of these vertices, then it passes through the vertex at most once. 

Each of the angles $\theta_q$, $\theta_{p_{\pm}}$ is positive, and so
if $\eta$ passes through exactly one of the points $q, p_{\pm}$, then the path $\dev \circ \eta$ 
is a simple arc. In particular, the endpoints $\dev(x)$ and $\dev(x')$ are distinct.

Suppose that $\eta$ passes through exactly two vertices say $v_1,v_2 \in \{p_+,p_-,q\}$.
Lemma \ref{lem:interior-angles} implies that $\psi_1+ \psi_2 < 3 \pi$. We also have $\psi_i \geq \pi$. 
An elementary argument in Euclidean geometry shows that $\dev(\eta)$ is a simple arc.

Finally, suppose that $\eta$ passes through each of $p_+,p_-,q$.

Hence $\psi_1+ \psi_2 + \psi_3 \leq 4 \pi$. We also have $\psi_i \geq \pi$. An elementary 
Euclidean geometry argument shows that $\dev \circ \eta$ is a simple arc. 
\end{proof}

In what follows, we will identify the polygon $P$ 
with its image in $\Cbb$. See Figure \ref{fig:polygon-P}.

\begin{lem} 
The shortest geodesic joining 
$c_1$ (resp. $c_3$) to $p_+$ (resp. $p_-$) is direct.
\end{lem}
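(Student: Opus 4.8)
My plan is to reduce the statement to a single inclusion of Euclidean subsets and then verify that inclusion from the explicit shape of the polygon $P$. Write $\op_+:=\dev(p_+)$ and let $L$ be the line through $\op_+$ and $\op_-$. The claim is equivalent to the assertion that the straight segment $[\dev(c_1),\op_+]$ is contained in $P$. Indeed, since $\dev$ restricts to an isometry of $P$ onto a Euclidean polygon, every path in $P$ from $\dev(c_1)$ to $\op_+$ has length at least $|\dev(c_1)-\op_+|$, so once $[\dev(c_1),\op_+]\subset P$ this segment is the shortest geodesic; and it is direct, because the only vertices of $P$ coming from a zero of $\omega$ are $q$, $p_+$, $p_-$ (the vertices $v_\pm,v_2,v_3$ are copies of the Weierstrass point through which all of $\talpha_1,\talpha_2,\talpha_3$ pass), its endpoint is $p_+$, and it contains neither $\oq$ nor $\op_-$ in its interior: by the proof of Lemma~\ref{lem:interior-angles}, $\oq$ lies in the closed half-plane bounded by $L$ not containing $\ov_+$ (nor $\ov_-$), whereas $[\dev(c_1),\op_+]$ meets $L$ only at the endpoint $\op_+$. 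Conversely a direct shortest geodesic, being straight, must be this segment.

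To establish $[\dev(c_1),\op_+]\subset P$, I would show that the triangle $\Delta:=\mathrm{conv}\{\ov_+,\dev(c_1),\op_+\}$ is contained in $P$, where $\dev(c_1)$ is the midpoint of the side $\oalpha_1$ of $P$, which is also a side of the convex quadrilateral $Q$. Two sides of $\Delta$ — namely $\odelta_+=[\ov_+,\op_+]$ and the half $[\ov_+,\dev(c_1)]$ of $\oalpha_1$ — are boundary segments of $P$ meeting at $\ov_+$ with interior angle $\le\theta_1<\pi/3$, so $\Delta$ agrees with $P$ near $\ov_+$; it then suffices to check that the remaining boundary segments of $P$, namely $\oalpha_2,\oalpha_3,\odelta_-,\sigma_-,\sigma_+$, miss $\mathrm{int}\,\Delta$. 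All three vertices of $\Delta$ lie in $\bar Q$ ($\ov_+$ a vertex, $\dev(c_1)$ on $\oalpha_1$, $\op_+$ in the interior of $Q$), so $\Delta\subset\bar Q$ and $\Delta\cap\partial Q=[\ov_+,\dev(c_1)]$; since $\oalpha_2$ meets $\oalpha_1$ only at $\ov_2\notin[\ov_+,\dev(c_1)]$ and $\oalpha_3$ is disjoint from $\oalpha_1$, neither meets $\Delta$. Moreover $\sigma_+=[\op_+,\oq]$ and $\sigma_-=[\oq,\op_-]$ lie in the closed half-plane bounded by $L$ containing $\oq$ and meet $L$ only at $\op_+,\op_-$, whereas $\Delta\setminus\{\op_+\}$ lies strictly on the opposite side of $L$; hence they meet $\Delta$ at most at the vertex $\op_+$.

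The one segment that requires care, and the main obstacle, is $\odelta_-=[\op_-,\ov_-]$, which lies on the same side of $L$ as $\Delta$. By simplicity of $\partial P$ (established when $P$ was identified with a Euclidean polygon) $\odelta_-$ is disjoint from the two non-adjacent boundary segments $\odelta_+$ and $\oalpha_1$, hence from two of the three sides of $\Delta$, and $\ov_-\notin\mathrm{int}\,\Delta$, being a vertex of $Q$; so it remains only to rule out that $\odelta_-$ crosses the chord $[\dev(c_1),\op_+]$. This is forced by the shape of $P$ drawn in Figure~\ref{fig:polygon-P}: because $\theta_1<\pi/3$ one has $\length(\delta)\le\sys(X)/2$ and $\odelta_-$ leaves the corner $\ov_-$ of $Q$ at angle less than $\pi/3$ to $\oalpha_3$, so $\odelta_-$ stays in a neighbourhood of the $\ov_-$–corner of $Q$ that the side $\oalpha_2$ (of length $\sys(X)$) separates from the neighbourhood of the $\ov_+$–corner carrying $\Delta$. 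This gives $\Delta\subset P$, and hence the lemma; the statement about $c_3$ and $p_-$ is the mirror image under the relabeling $\oalpha_1\leftrightarrow\oalpha_3$, $\ov_+\leftrightarrow\ov_-$, $\op_+\leftrightarrow\op_-$, a symmetry of the construction. I expect everything except this last separation of the two corner regions to be essentially automatic; that step is where the hypothesis $\theta_1<\pi/3$, the convexity of $Q$, and Lemma~\ref{lem:interior-angles} all come together.
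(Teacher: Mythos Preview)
Your approach---showing directly that the triangle $\Delta=\mathrm{conv}\{\ov_+,\dev(c_1),\op_+\}$ lies inside the Euclidean polygon $P$---is different from the paper's, and most of your case analysis is fine. But the step you yourself flag as the crux is not actually carried out. You write that ``$\odelta_-$ stays in a neighbourhood of the $\ov_-$--corner of $Q$ that the side $\oalpha_2$ (of length $\sys(X)$) separates from the neighbourhood of the $\ov_+$--corner carrying $\Delta$.'' This is geometrically wrong: $\oalpha_2=[\ov_2,\ov_3]$ is the side of the convex quadrilateral $Q$ \emph{opposite} to $[\ov_+,\ov_-]$, so it lies on $\partial Q$ and separates nothing in the interior. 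Both $\odelta_-$ and $\Delta$ sit near the $[\ov_+,\ov_-]$ side of $Q$, and since $|\ov_+-\ov_-|\to 0$ as $\theta_1\to\pi/3$, the two ``corner neighbourhoods'' can overlap substantially. Ruling out that $\odelta_-$ crosses $[\dev(c_1),\op_+]$ comes down to excluding $\op_-\in\Delta$, and nothing in your argument does this. (A related gap: you assert $\Delta\setminus\{\op_+\}$ lies strictly on the $\ov_+$ side of $L$, but you only know $\ov_+$ is on that side; you have not checked $\dev(c_1)$ is.)

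The paper's proof is quite different and much shorter: it never works inside $P$ at all, but goes back to the isoceles triangle $T$ (vertices $\ov_1,\oc_1,\oc_3$) from the visibility argument in Lemma~\ref{lem:short-on-top}. Since $x^*=c_+^*\in T$, if the shortest geodesic in $K$ from $c_1$ to $c_+^*$ were indirect it would pass through $c_-^*$, forcing $c_-^*\in T$ as well; then one of $\sigma_\pm$ lies in $T$, so its midpoint $c_4$ or $c_5$ lies in $T$ and is within $\sys(X)/2$ of $v_1$, contradicting the definition of systole. That argument recycles exactly the estimate already used in the proof of Lemma~\ref{lem:short-on-top} and avoids any delicate analysis of the shape of $P$.
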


\begin{proof}
Recall the triangle $T$ described in Figure \ref{fig:triangle-T}. The point 
$p_+$ corresponds to $x^*=c_+^*$, and so if the shortest 
geodesic joining $c_1$ and $p_+$ 
were not direct, then the shortest geodesic in $X$
joining $c_1$ to $c_+^*$ would also pass through $c_-^*$. 
Hence $c_-^*$ would also belong to the triangle $T$ 
described above, and so either the image of $\sigma_+$ 
or the image of $\sigma_-$ would lie in $T$. But then the 
midpoint $c_4$ of $\sigma_+$ or the midpoint $c_5$ of 
$\sigma_-$ would belong to $T$. Hence $|v_1c_4|$ or $|v_1 c_5|$ 
would be less than $\sys(X)/2$, a contradiction. 

A similar argument shows that the shortest geodesic from 
$c_3$ to $p_-$ is direct.
\end{proof}

Because $x^*$ belongs to the interior of $T$, we have 
$\angle \ov_1 \oc_1 x^* < \angle \ov_1 \oc_1 \oc_3$.
Since $T$ is isoceles, we have 
$2 \cdot \angle \ov_1 \oc_1 \oc_3 + \theta_1 = \pi$.
Thus, it follows that 
\begin{equation} \label{est:cpv1}
\angle\, v_+\, c_1\, p_+ ~ <~ \frac{\pi-\theta_1}{2}.
\end{equation}
(A similar argument shows that 
$\angle\, v_- c_3\, p_- < (\pi-\theta_1)/2$.)

We will use (\ref{est:cpv1}) to prove the following 
\begin{lem}
The minimal geodesic joining 
$c_3$ to $c_5$ is direct. 
\end{lem}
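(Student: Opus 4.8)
Looking at this setup, I need to prove that the minimal geodesic joining $c_3$ to $c_5$ is direct, given all the structure established in the polygon $P$ and the estimate (\ref{est:cpv1}).

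\textbf{Setup and strategy.} Recall that $c_5$ is the midpoint of the segment $\sigma_-$, which in $P$ runs between the vertex $q$ (corresponding to $c_-^*$) and the vertex $p_-$ (corresponding to $c_+^*$), while $c_3$ lies on the boundary segment $\oalpha_3$ near $v_-$. The plan is to argue by contradiction: if the minimal geodesic $\eta$ from $c_3$ to $c_5$ were \emph{not} direct, then it must bend at one of the interior-angle vertices $p_+$, $p_-$, or $q$, since the angles at $v_+,v_-,v_2,v_3$ are all less than $\pi$ and a minimal geodesic in a Euclidean polygon can only turn at a vertex whose interior angle is at least $\pi$. Because $c_5$ sits on the segment $p_-q$, the only candidate turning vertex that $\eta$ could realistically use to get from the $\oalpha_3$-side to $\sigma_-$ is $p_-$ (one should also rule out $q$, but the geometry near $q$ combined with $\theta_q<2\pi$ from Lemma \ref{lem:interior-angles} and the position of $c_5$ on segment $p_-q$ makes that path longer). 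So the essential case is $\eta$ passing through $p_-$.

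\textbf{The key length estimate.} In that case $\ell(\eta) = |c_3 p_-| + |p_- c_5|$. For the first term, the previous lemma tells us the shortest geodesic from $c_3$ to $p_-$ is direct, so $|c_3 p_-|$ is the Euclidean distance $|\oc_3\, \op_-|$ in $P$; moreover $|c_3 v_-| = \sys(X)/2$ since $c_3$ is the midpoint of the Weierstrass arc $\talpha_3$ of length $\sys(X)$, and by (the analogue of) (\ref{est:cpv1}), $\angle v_-\, c_3\, p_- < (\pi-\theta_1)/2 < \pi/2$. For the second term, $c_5$ is the midpoint of $\sigma_-$, and $\theta_{p_-} < 2\pi$ (Lemma \ref{lem:interior-angles}); the side $\delta_- = v_-p_-$ has length $|v_- p_-| = \sys(X)/2$ (it is the direct segment $\delta$ from $v_1$ to $c_+^*$), so one controls $|p_- c_5| \le \frac{1}{2}|\sigma_-|$ together with the triangle through $p_-$. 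The goal is to show $|c_3 p_-| + |p_- c_5| < \sys(X)/2$, which contradicts the fact (definition of $\sys(X)$) that the distance between any two cone points is at least $\sys(X)/2$ — here $c_3$ and $c_5$ are both $\pi$-cone points on $X/\langle\tau\rangle$, equivalently their lifts are Weierstrass points at distance $\ge \sys(X)/2$ apart in $X$.

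\textbf{Main obstacle.} The delicate point is making the Euclidean-geometry estimate genuinely close the gap: I expect one must combine $\theta_1<\pi/3$, the isosceles triangle $T$ with apex angle $<\pi/3$ (so its base $\oc_1\oc_3$ has length $<\sys(X)/2$ and all interior points lie within $\sys(X)/2$ of $\ov_1$), and the fact that $p_\pm$ lie inside the quadrilateral $Q$ with $|\odelta_\pm| \le \sys(X)/2$. Concretely, one places $p_-$ inside $T$ (or inside the appropriate isosceles triangle on the $c_3$-side), notes $c_5$ is a midpoint of a segment both of whose endpoints are within $\sys(X)/2$ of $v_-$, and deduces the path $c_3 \to p_- \to c_5$ is strictly shorter than $\sys(X)/2$. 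The risk is that the naive bound $|c_3p_-|+|p_-c_5| \le \sys(X)/2 + \text{something}$ is not tight enough, so one likely needs to exploit that the angle at $c_3$ is strictly less than $\pi/2$ (strict, from (\ref{est:cpv1}) and $\theta_1>0$) to get the strict inequality. I would carry out the cases in the order: (i) rule out turning at $q$ or at both $p_-$ and $q$ using $\theta_q<2\pi$ and the midpoint position of $c_5$; (ii) reduce to $\eta$ through $p_-$ alone; (iii) run the isosceles-triangle estimate to get $\ell(\eta) < \sys(X)/2$, the contradiction.
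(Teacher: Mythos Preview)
Your strategy has a fatal logical gap. You assume the minimal geodesic $\eta$ from $c_3$ to $c_5$ bends at $p_-$ and then aim to prove
\[
|c_3 p_-| + |p_- c_5| < \sys(X)/2
\]
to reach a contradiction with $d(c_3,c_5)\ge \sys(X)/2$. But under your own hypothesis, $|c_3 p_-|+|p_-c_5|$ \emph{equals} $\ell(\eta)=d(c_3,c_5)$, which is already at least $\sys(X)/2$. So the inequality you are trying to prove is false from the outset; no amount of Euclidean estimation will close that gap. (You also assert $|v_-p_-|=\sys(X)/2$, but $\delta$ joins $v_1$ to the point $x^*$ lying in the \emph{interior} of the isoceles triangle $T$, so in fact $|\delta|<\sys(X)/2$; this is a side issue compared with the main problem.)

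The paper does not argue by length at all. Instead it proves directly that the Euclidean segment $\overline{c_3c_5}$ is contained in the planar polygon $P$, so that the straight segment \emph{is} the minimal geodesic and is direct. Concretely, one introduces the line $\ell_1$ through $c_3$ parallel to $\overline{p_+p_-}$, the line $\ell_2$ through $v_+$ parallel to $\overline{v_-v_3}$, and the line $\ell_3$ through $c_1$ and $x=\ell_1\cap\ell_2$. Using $\theta_2\le\theta_3$, $\theta_2+\theta_3<\pi$, and the estimate $\angle v_+c_1p_+<(\pi-\theta_1)/2$ from (\ref{est:cpv1}) together with the isoceles triangle $\triangle c_1xv_+$, one pins down $p_+$ in the intersection of the appropriate half-planes and concludes $p_+\in H_1$ (the side of $\ell_1$ containing $v_+$). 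Since $\overline{p_+p_-}\parallel\ell_1$, also $p_-\in H_1$, hence $\angle c_3p_-p_+<\pi$; combined with $\theta_q>\pi$ this gives $\angle c_3p_-q<\pi$, which is exactly the visibility condition ensuring $\overline{c_3c_5}\subset P$. In short, the argument is positional (half-planes and parallels), not metric; a length-based contradiction of the type you propose is not available here.
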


\begin{proof}
Let $\ell_1$ be the line 
parallel to $\overline{p_+p_-}$ that passes through $c_3$, 
and let $\ell_2$ be the line  
parallel to $\overline{v_-v_3}$ that passes through $v_+$. 
Since $\theta_2 < \pi/2$ and $|v_+v_2|=|v_2v_3|$, 
the points $v_2$ and $v_3$ 
lie in distinct components of $\Cbb - \ell_2$. 
Because $p_-$ lies in the component of $\Cbb -\overleftrightarrow{v_-v_3}$ 
that contains $v_2$ and $\overline{p_+p_-}$ is
a translate of $\overline{v_+v_-}$, the point $p_+$ 
lies in the component $H_2$ of $\Cbb - \ell_2$ that contains $v_2$. 
See Figure \ref{fig:direct-c3-c5}. 

\begin{figure}
\begin{center}
\includegraphics[width=10cm]{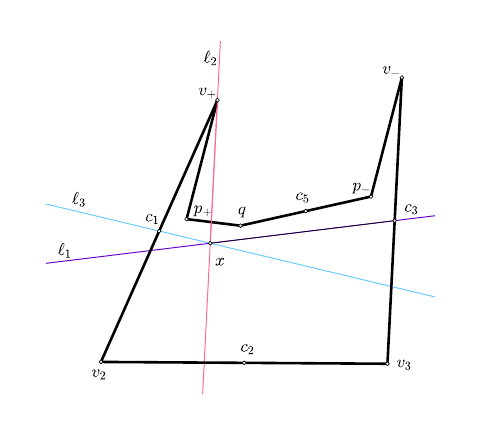}
\end{center}
\caption{The segment
that joins $c_3$ to $c_5$ belongs to $P$. 
\label{fig:direct-c3-c5}}
\end{figure}

Let $x$ be the point of intersection of
$\ell_1$ and $\ell_2$, and let $\ell_3$ be the line 
passing through $c_1$ and $x$. Since 
$|v_+x| = |v_-c_3|= \sys(X)/2= |v_+ c_1|$, 
the triangle $\triangle c_1 x v_+$ is isoceles.
Moreover, $\angle c_1 v_+ x = \theta_1$, and 
so $\angle v_+ c_1 x = (\pi -\theta_1)/2$. 
Therefore, if follows from (\ref{est:cpv1}) that 
$p_+$ lies in the component $H_3$ of $\Cbb- \ell_3$ 
that contains $v_+$.

Because $\theta_2 \leq \theta_3$ and 
$\theta_2+ \theta_3 < \pi$, the intersection 
$H_2 \cap H_3$ lies in the component $H_1$ of $\Cbb  -\ell_1$ 
that contains $v_+$. Thus, $p_+ \in H_1$ and since 
$\overline{p_+p_-}$ is parallel to $\ell_1$, we have
that $p_- \in H_1$. Hence, the angle 
$\angle c_3 p_-p_+$ is less than $\pi$. By Lemma \ref{lem:interior-angles},
the angle $\theta_-$ at $q$ is 
greater than $\pi$, and therefore we find that 
$\angle c_3 p_-q< \pi$. It follows that there is a direct
segment from $c_3$ to $c_5$ as desired. 
\end{proof}

\begin{lem}
The shortest geodesic that joins $c_1$ to $c_4$ is direct.
\end{lem}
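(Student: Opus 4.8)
The plan is to run an argument parallel to the one just used for the pair $(c_3,c_5)$, but now located at the other ``end'' of the polygon $P$, working with the vertex $p_+$, the cone point $c_1$, and the Weierstrass point $c_4$ sitting at the midpoint of $\sigma_+$. The goal is to show that $c_4$ lies in the component of $\Cbb$ cut off by the boundary geodesics of $P$ that is ``visible'' from $c_1$ without crossing any side, so that the Euclidean segment $\overline{c_1 c_4}\subset P$; this segment then corresponds to a direct geodesic in $K$ joining $c_1$ to the Weierstrass point at $c_4$.

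First I would recall the data already established: $p_+$ lies in the interior of the quadrilateral $Q$ (shown when we proved $\odelta_+$ lies in $Q$), the shortest geodesic from $c_1$ to $p_+$ is direct, and the estimate (\ref{est:cpv1}), $\angle v_+ c_1 p_+ < (\pi-\theta_1)/2$, holds. Since $c_4$ is the midpoint of the segment $\sigma_+$ joining $p_+$ to $q$, and since by Lemma \ref{lem:interior-angles} the interior angle $\theta_{p_+}$ at $p_+$ is less than $2\pi$ while the angle at $q$ is greater than $\pi$, the segment $\overline{p_+ q}$ emanates from $p_+$ into the interior of $P$; hence $c_4$ lies in the interior of $P$. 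It then remains to verify that the straight segment from $c_1$ to $c_4$ does not exit $P$ through one of the sides $\oalpha_1$, $\odelta_+$, $\oalpha_2$, or $\oalpha_3$. The key geometric inputs are: (i) the angle $\angle v_+ c_1 p_+$ is small by (\ref{est:cpv1}), so the ray from $c_1$ toward $p_+$ (and toward any point of $\overline{p_+q}$ close to $p_+$, in particular toward $c_4$ once we control the direction of $\sigma_+$) stays on the $v_+$-side of the relevant bounding lines; and (ii) since $\theta_2\le\theta_3$ and $\theta_2+\theta_3<\pi$ with the $\talpha_i$ all of length $\sys(X)$, the arc $\oalpha_1\cup\oalpha_2\cup\oalpha_3$ is a convex-ish piecewise-linear path that $\overline{c_1c_4}$ cannot cross. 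If the direct segment $\overline{c_1 c_4}$ did exit $P$, then — exactly as in the previous lemmas — the shortest geodesic in $X$ from $c_1$ to $c_4$ would have to route through the remaining cone point or along a boundary arc, forcing $|c_1 c_4| < \sys(X)/2$, which contradicts the fact that $c_1$ and $c_4$ are projections of distinct Weierstrass points (so any arc between them lifts to a closed curve of length $< \sys(X)$), a contradiction with the definition of the systole.

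Concretely I would set up lines analogous to $\ell_1,\ell_2,\ell_3$ in the proof that $\overline{c_3c_5}$ is direct: a line $m_1$ parallel to $\overline{p_+p_-}$ (equivalently to $\overline{v_+v_-}$) through $c_1$, a line $m_2$ parallel to $\overline{v_+v_2}$ through $c_3$ or through $v_-$, and a line $m_3$ through $c_1$ bounding the isoceles triangle with apex at the relevant $v$. Using (\ref{est:cpv1}) at the vertex $c_1$ and the inequalities $\theta_1<\pi/3$, $\theta_2\le\theta_3$, $\theta_2+\theta_3<\pi$, one shows $p_+$ (hence $c_4$, which lies on the $p_+$-side since $\theta_q>\pi$ forces $\sigma_+$ to head back toward the $v_+$ side) lands in the intersection of the correct half-planes, so that the segment $\overline{c_1 c_4}$ stays inside $P$. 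This mirrors the previous lemma almost verbatim with the roles of $(c_3,c_5,p_-)$ replaced by $(c_1,c_4,p_+)$.

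The main obstacle I anticipate is correctly pinning down the direction of the segment $\sigma_+$ inside $P$ — i.e.\ showing that $c_4$, the midpoint of $\overline{p_+q}$, genuinely lies on the ``$v_+$-side'' of the bounding lines and not somewhere that would let $\overline{c_1c_4}$ escape through $\oalpha_2$ or $\oalpha_3$. This requires using the constraint $\theta_{p_+}+\theta_q+\theta_{p_-}=4\pi$ together with $\theta_q>\pi$ from Lemma \ref{lem:interior-angles} to bound $\theta_{p_+}$ from below as well as above, and then combining that with the convexity of the $\oalpha_i$-path. Once the half-plane bookkeeping is done, the contradiction via $\sys(X)$ is immediate and identical in spirit to Remark \ref{remk:surgery} and the preceding lemmas.
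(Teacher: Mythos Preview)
Your plan to ``mirror'' the $(c_3,c_5)$ argument breaks down because the configuration is not symmetric: the previous lemma used the ordering $\theta_2\le\theta_3$ in an essential way. Recall that in the $(c_3,c_5)$ proof the step ``since $\theta_2<\pi/2$ and $|v_+v_2|=|v_2v_3|$, the points $v_2$ and $v_3$ lie in distinct components of $\Cbb-\ell_2$'' requires $\theta_2<\pi/2$, and the step ``because $\theta_2\le\theta_3$ and $\theta_2+\theta_3<\pi$, the intersection $H_2\cap H_3$ lies in $H_1$'' requires $\theta_2\le\theta_3$. The mirrored versions would need $\theta_3<\pi/2$ and $\theta_3\le\theta_2$, neither of which need hold (indeed $\theta_3$ can be arbitrarily close to $\pi$). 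So the half-plane bookkeeping you sketch simply does not go through on the $c_1$-side.

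The paper's proof is not a mirror: it is a two-case argument on whether $p_+$ lies in the half-plane $H_1$ (bounded by the line through $c_1$ parallel to $\overline{v_+v_-}$). If $p_+\in H_1$, a short direct check gives the segment. If $p_+\notin H_1$, the paper \emph{uses the conclusion of the previous lemma}: since $\overline{c_3c_5}$ is already known to be direct, one has $|c_3c_5|\ge\sys(X)/2$, which forces $c_5$ outside the ball of radius $\sys(X)/2$ about $c_3$. Combining this with $c_5\in Q$ and $c_5\in\Cbb-H_1$ traps $c_5$ in a region that in turn pushes $q$ to the correct side of a line through $c_1$, yielding $\angle c_1p_+c_4<\pi$ and hence a direct segment. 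The asymmetry is handled by bootstrapping from the already-proven case rather than by a parallel argument.

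A second issue: your fallback contradiction (``if $\overline{c_1c_4}$ exits $P$, the geodesic routes through a cone point, forcing $|c_1c_4|<\sys(X)/2$'') is backwards. Routing through a cone point gives no automatic upper bound on length; the whole difficulty is to show the segment stays inside $P$, not to derive a length bound from failure.
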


\begin{proof}
Let $\ell_1$ be the 
line passing through $c_1$ that is parallel to $\overline{v_+v_-}$. 
Let $H_1$ be the the component $\Cbb-\ell_1$ that contains $v_+$.
It suffices to show that the point $p_+$ lies in  $H_1$. For then, since 
$\overline{p_+p_-}$ is parallel to $\ell_1$
and the angle $\theta_-< \pi$, it will follow that the
minimal geodesic joining $c_1$ to $c_4$ is direct. 

\begin{figure}
\begin{center}
\includegraphics[width=9cm]{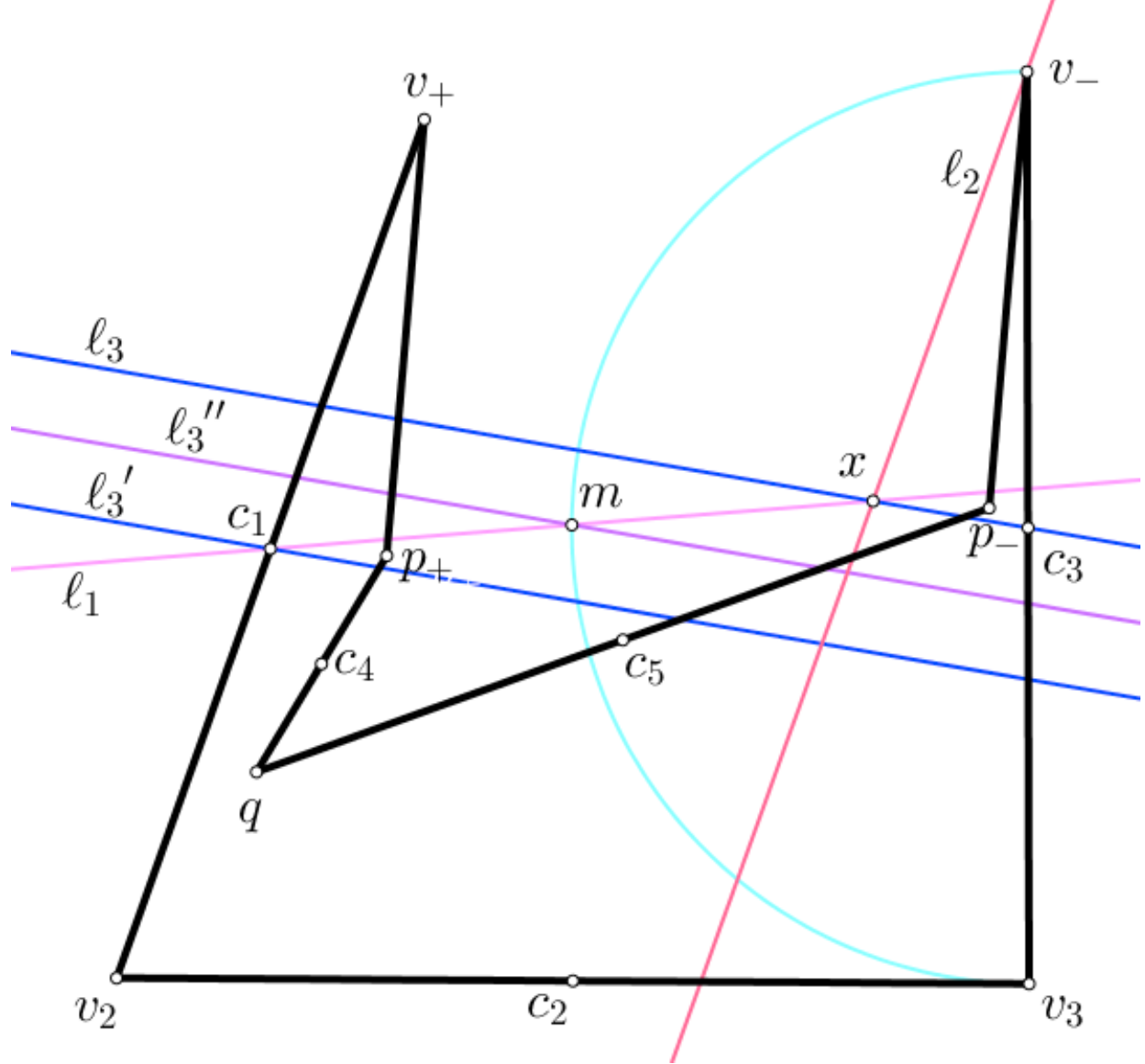}
\end{center}
\caption{The segment that joins $c_1$ to $c_4$ belongs to $P$. 
\label{fig:direct-c1-c4}}
\end{figure}

Suppose then, to the contrary, that $p_+$ belongs to $\Cbb-H_1$.
Then then since $\overline{p_-p_+}$ is parallel to 
$\ell_1 =\partial(\Cbb-H_1)$, the point $p_-$ also belongs to 
$\Cbb-H_1$. Moreover, since, by Lemma \ref{lem:interior-angles},
the angle $\theta_-$ at $q$ 
is larger than $\pi$, we also have $c_5 \in \Cbb-H_1$.
See Figure  \ref{fig:direct-c1-c4}.

Let $\ell_2$ be the line through $v_-$ that is parallel 
to $\overline{v_+p_+}$, and let $x$ be the intersection point 
of $\ell_1$ and $\ell_2$. Let $\ell_3$ be the line that passes
through $x$ and $c_3$. The triangle $\triangle x c_3v_-$
is isoceles, and in particular, $\angle xc_3v_-$ equals 
$(\pi-\theta)/2$. The argument analogous to that used to 
derive (\ref{est:cpv1}) gives the inequality 
$\angle p_- c_3 v_- < (\pi-\theta)/2$.
Therefore, $p_-$ lies in the component 
$H_3$ of $\Cbb - \ell_3$ that contains $v_-$. 

If we let $\ell_3'$ denote the line parallel to $\ell_3$ that 
passes through $c_1$, then, since $\overline{p_+p_-}$ is a 
translate of $\overline{c_1x}$, the point $p_+$ lies 
in the component $H_3'$ of $\Cbb - \ell_3'$ that contains $v_-$. 
Thus, to prove that there is a direct segment from $c_1$ 
to $c_4$, it suffices to show that $q$ lies in $\Cbb-H_3'$
for then $\angle c_1 p_+ c_4 < \pi$.

Let $m$ be the midpoint of $\overline{c_1x}$, and let 
$\ell_3''$ be the line parallel to $\ell_3$ that passes 
through $m$. To show that $q \in \Cbb-H_3'$, it suffices 
to show that $c_5$ lies in the closure of the 
component $H_3''$
of $\Cbb -\ell_3''$ that contains $v_2$. Indeed, 
$c_5$ is the midpoint of $\overline{p_-q}$ and 
we know that $p_-$ lies in $H_3$.

Since there is a direct segment joining $c_5$ to $c_3$,
the point $c_5$ lies outside the ball $B$ of radius $\sys(X)/2$
with center at $c_3$. We also know that $c_5$ lies in $Q$,
the convex hull of $\{v_+,v_2,v_3,v_-\}$, and 
that $c_5$ belongs to $\Cbb- H_1$. An elementary 
geometric argument shows that $(Q- B) \cap (\Cbb -H_1)$
lies in $\overline{H_3''}$. Thus, $c_5 \in \overline{H_3''}$
and there exists a direct segment joining $c_1$ to $c_4$ 
as desired. 
\end{proof}

Given that there are direct segments between $c_1$ and $c_4$
and between $c_3$ and $c_5$, we will now derive a contradiction and thus
complete the proof of Theorem \ref{prop:bivalent-direct} as follows. 

Let $\ell_+$ be the line that passes through $v_+$ and $c_4$,
let $\ell_-$ denote the line that passes through $v_-$ and $c_5$,
and let $x$ be the intersection of $\ell_+$ and $\ell_-$. 
See Figure \ref{fig:away}. Because $\overline{v_+p_+}$ and $
\overline{v_-p_-}$ are parallel and of the same length and the points
$c_4$, $c_5$ are the respective midpoints of $\overline{p_+q}$,
$\overline{p_-q}$, the points $c_4$, $c_5$ are also
the respective midpoints of $\overline{v_+x}$, $\overline{v_- x}$.
Since $c_1$ is the midpoint of $\overline{v_+v_2}$, 
we have $|xv_2|= 2 \cdot |c_1c_4|$. Since the geodesic 
from $c_1$ to $c_4$ is direct, we have $|c_1c_4| \geq \sys(X)/2$
and hence $|xv_2| \geq \sys(X)$. Similarly, since 
the geodesic from $c_3$ to $c_5$ is direct, we find 
that $|xv_3|\geq \sys(X)$.

\begin{figure}
\begin{center}
\includegraphics[width=9cm]{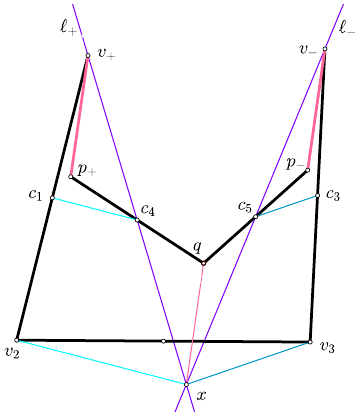}
\end{center}
\caption{The points $c_4$ and $c_5$ are the respective 
midpoints of $\overline{v_+x}$ and $\overline{v_-x}$. Thus, 
$|v_2x| = 2 |c_1c_4|$ and $|v_3x|= 2 |c_3c_5|$. \label{fig:away}}
\end{figure}

In other words, if we let $B_+$ (resp. $B_-$) be the ball of radius 
$\sys(X)$ about $v_2$ (resp. $v_3$), then $x$ lies outside 
$B_+ \cup B_-$. Since $\{v_+, v_2,v_3,v_-\}$ is contained 
$\oB_+ \cup \oB_-$, the polygon $P$ is contained 
in the convex hull of $\oB_+ \cup \oB_-$. 
 
Let $\ell_{23}$ denote the line passing through $v_2$ and $v_3$,
and let $y: \Cbb \to \Rbb$ denote the real affine 1-form such that
$|y(z)|=\dist(z,\ell_{23})$ and such that $y(v_+)>0$.
Because $\theta_2 \leq \theta_3 < \pi$, we have 
that $y(z) \geq 0$ for each $z \in P$. 

Note that $y(x) < y(q)$. Indeed, since 
$\angle c_1v_+p_+ < \theta_1$ and $\theta_1+\theta_2 < \pi$,
it follows that $y(v_+) > y(p_+)$. The segment $\overline{xq}$
is the reflection of $\overline{v_+p_+}$ about the point $c_4$,
and hence $y(x) < y(q)$. 

Let $x'$ be the intersection point of $\ell_{23}$ and the 
line passing through $x$ and $q$. The point $x'$ lies 
in the line segment $\overline{v_2v_3}$. 
Indeed, because $\theta_2 + \theta_3< \pi$, the
line through $v_+$ and $v_2$ and the line through
$v_-$ and $v_3$ intersect at a unique point $z$,
and moreover, the polygon $P$ lies in the convex
hull $T'$ of $\{z, v_2, v_3\}$. Because $\overline{p_-v_-}$
and $\overline{p_+v_+}$ are parallel, $p_+$ and $p_-$
lie in $T'$, $v_+$ lies in $\overline{zv_2}$, and
$v_-$ lies in $\overline{zv_3}$, 
any line parallel to ${p_+v_+}$ that intersects
$T'$ must intersect $\ell_{23}$ at a point in the segment
$\overline{v_2v_3}$. In particular, the point $x'$
lies in $\overline{v_2v_3}$.

We claim that $y(x) > 0$. Indeed, suppose not. Then $x'$ 
would lie in the segment $\overline{xq}$. Thus, 
$|x'x| \leq |xq|=|v_{\pm}p_{\pm}| < \sys(X)/2$, and 
hence $x$ would belong to the set, $A$, of points 
whose distance from $\overline{v_2v_3}$ is at most $\sys(X)/2$.
Elementary geometry shows that $A \subset B \cup B_+$, but 
$x$ lies in the complement of $B_- \cup B_+$, a contradiction. 

Let $Q$ be the convex hull of $\{v_+,v_2, v_3, v_-\}$. 
We have $P \subset Q$ and hence $q \in Q$. Since 
$0< y(x) < y(q)$ and the line through $x$ and $q$ 
meets $\ell_{23}= {\rm ker}(y)$ at $x' \in \overline{v_2v_3}$,
the point $x$ also belongs to $Q$. The set $Q$ is contained
in the convex hull of $\oB_+ \cup \oB_-$. Therefore, 
$x$ lies inside the 
convex hull of $\oB_+ \cup \oB_-$ and outside $B_+ \cup B_-$.
Since $x' \in \overline{v_2v_3}$ it follows that 
$\pi/4 \leq \angle v_2 x' x \leq 3\pi/4$, and, therefore, 
since $y(q) > y(x)$, we find that $q$ is also outside $B_+ \cup B_-$. 
See Figure \ref{fig:y-large}.

\begin{figure}
\begin{center}
\includegraphics[width=10cm]{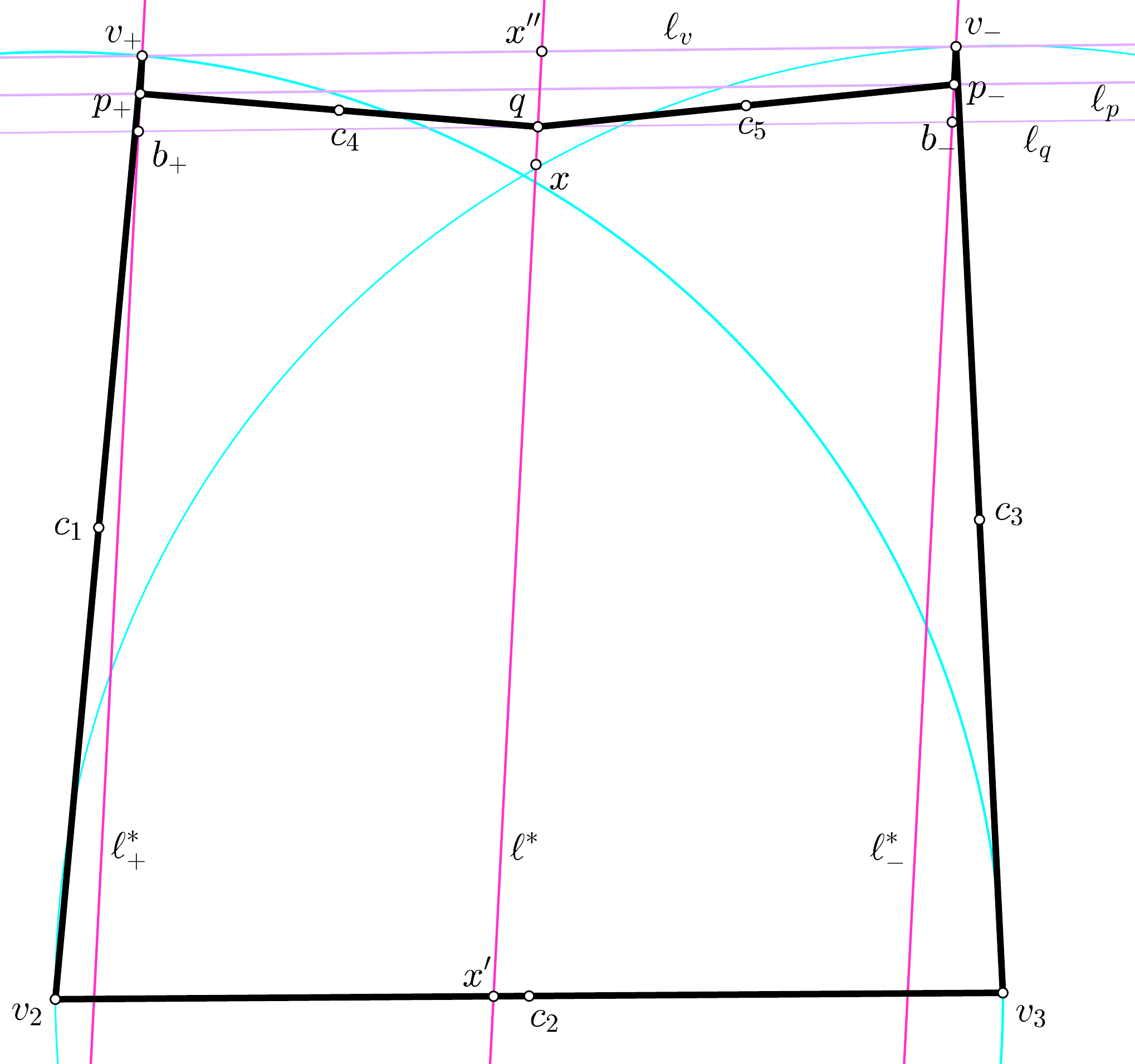}
\end{center}
\caption{The distances $|xv_2|$ and $|xv_3|$ are at least $\sys(X)$,
and $y(x)>0$. 
\label{fig:y-large}}
\end{figure}

Since $x$ and $q$ both lies inside the 
convex hull of $B_+ \cup B_-$ but outside $B_+ \cup B_-$,
we have $y(q)-y(x) < (1- \sqrt{3}/2) \cdot \sys(X)$. 
Since $\pi/4 \leq \angle v_2 x' x \leq 3\pi/4$, we have 
$|xq| \leq \sqrt{2} \cdot |y(q)-y(x)|$ and hence 
\begin{equation} \label{est:thin}
|v_{\pm}p_{\pm}|~ 
\leq~ 
\sqrt{2} \cdot \left(1- \frac{\sqrt{3}}{2} \right) \cdot \sys(X)~ 
<~ \frac{\sys(X)}{4}. 
\end{equation}

Let $\ell_p$ be the line through $p_+$ and $p_-$ and let $\ell_v$
be the line through $v_+$ and $v_-$. Let $\ell_{\pm}^*$ denote the 
line passing through $v_{\pm}$ and $p_{\pm}$.
By Lemma \ref{lem:interior-angles}, the interior angle $\theta_-$ at $q \in P$ 
is greater than $\pi$, and hence the point $q$ lies in the component of $\Cbb - \ell_p$ that 
contains the segment $\overline{v_2v_3}$, and hence $q$ 
lies in the component of $\Cbb -\ell_v$ that contains $\overline{v_2v_3}$.
Since $q$ lies outside $B_+ \cup B_-$, it follows that $q$ lies 
in the bounded component of 
$\Cbb - (\ell_{23} \cup \ell_{v} \cup \ell_+^* \cup \ell_-^* )$. 

Let $\ell_q$ be the line through $q$ that it parallel to $\ell_v$. 
Let $A$ be the parallelogram that is the bounded component
of $\Cbb - (\ell_{q} \cup \ell_{v} \cup \ell_+ \cup \ell_- )$.
Let $b_{\pm}$ be the intersection of $\ell_{\pm}$ and $\ell_q$.
Then $A$ is the convex hull of $\{b_+, b_-, v_+, v_-\}$. 
Because $q$ lies in the component of $\Cbb - \ell_p$ that 
contains $\overline{x_2x_3}$, the point $p_{\pm}$ lies in 
$\overline{v_{\pm} b_{\pm}}$. 

The line $\ell^*$ through $x$ and $q$ is parallel to the sides
corresponding to $\ell_+$ and $\ell_-$. Let $x''$ be the intersection
of $\ell^*$ with the side $\overline{v_+v_-}$ of $A$ corresponding to $\ell_v$. 
Since $v_{\pm} \in B_+ \cup B_-$, the point $x''$ lies in 
the convex hull of $B_+\cup B_-$. By applying the argument that
led to (\ref{est:thin}) to this situation, we find 
that $|x''q| < \sys(X)/4$. 

We have $|b_+b_-|= |v_+v_-| < \sys(X)$ and hence either 
$|b_+q|< \sys(X)/2$ or $|b_-q|< \sys(X)/2$.
Suppose that $|b_+q|< \sys(X)/4$. The midpoint, $c_4$,
of $\overline{p_+q}$
lies in $A$. Let $a_+$ be the point of intersection of
$\ell_+$ and the line through $c_4$ that is parallel to $\ell_q$.
Then $a_+$ lies in the segment $\overline{p_+b_+}$. 

By the triangle inequality, we have
\[ |v_{+} p_+| + |p_+ c_4|~ 
\leq~ |v_{+}a_+|~ +~ |a_+c_4|~
<~ \frac{\sys(X)}{4} + \frac{\sys(X)}{4}~ 
=~ \frac{\sys(X)}{2} 
\]
But $v_+$ and $c_4$ are both Weierstrass points, and hence
we would have a curve of length less than $\sys(X)/2$.
A similar contradiction is obtained in the case when 
$|b_-q|< \sys(X)/2$.
\end{proof}

The following is immediate.

\begin{coro}
There are at most six homotopy classes of nonseparating systoles.
\end{coro}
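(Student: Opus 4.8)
The plan is to deduce this directly from Proposition~\ref{prop:nonseparating} and Theorem~\ref{prop:bivalent-direct}, via the correspondence set up at the beginning of \S\ref{sec:direct-arcs} between homotopy classes of nonseparating simple geodesic loops on $X$ and Weierstrass arcs on $X/\langle\tau\rangle$.

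First I would recall that, by Proposition~\ref{prop:nonseparating}, every homotopy class of nonseparating systole has a geodesic representative $\gamma'$ on which $\tau$ restricts to an isometric involution; since the cone angles at the $4\pi$ cone points are too large to be fixed pointwise in the relevant way, such a $\gamma'$ passes through exactly two Weierstrass points. The projection $p(\gamma')$ is therefore a systolic Weierstrass arc joining two angle $\pi$ cone points of $X/\langle\tau\rangle$, and this arc is direct precisely when $\gamma'$ avoids the $4\pi$ cone point. Because $p$ is a degree two branched cover whose branch locus is exactly $\{c_1,\dots,c_6\}$, the full $p$-preimage of a Weierstrass arc is a single geodesic loop, so distinct homotopy classes of nonseparating systoles give rise to distinct systolic Weierstrass arcs. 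Thus counting homotopy classes of (direct) nonseparating systoles is the same as counting direct systolic Weierstrass arcs.

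Then I would apply Theorem~\ref{prop:bivalent-direct}: each of the six angle $\pi$ cone points on $X/\langle\tau\rangle$ is an endpoint of at most two direct systolic Weierstrass arcs. Counting incidences between arcs and their two endpoints, the number of direct systolic Weierstrass arcs is at most $6\cdot 2/2 = 6$. Combining this with the bijection recalled above gives at most six homotopy classes of nonseparating systoles, as claimed.

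There is no real obstacle: all the geometric work was carried out in establishing Theorem~\ref{prop:bivalent-direct}, and the only point requiring a moment's care is bookkeeping—ensuring that one is counting homotopy classes (equivalently, Weierstrass arcs) rather than individual geodesic representatives, which is exactly what the correspondence from \S\ref{sec:direct-arcs} and Proposition~\ref{prop:nonseparating} provide.
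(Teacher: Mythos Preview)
Your argument has a real gap, and it mirrors a slip in the paper's own wording of the corollary. You correctly set up the bijection between homotopy classes of nonseparating systoles and systolic Weierstrass arcs, and you even note that such an arc may be direct or indirect. But then you invoke Theorem~\ref{prop:bivalent-direct}, which bounds only the \emph{direct} systolic Weierstrass arcs, and conclude a bound on \emph{all} nonseparating systoles. The parenthetical ``(direct)'' you slipped in does not make that step legitimate: indirect systolic Weierstrass arcs also lift to nonseparating systoles, and these are handled separately in \S\ref{sec:indirect}. In fact the literal statement is false: the surface analysed in \S 8 has no separating systole and ten homotopy classes of systoles, so all ten are nonseparating.

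The corollary should be read as ``there are at most six homotopy classes of nonseparating systoles \emph{corresponding to direct Weierstrass arcs},'' which is what the introduction announces and what the subsequent sections actually use (they always cite Theorem~\ref{prop:bivalent-direct} directly for the bound of six on direct arcs). With that reading, your argument is correct and is precisely the paper's ``immediate'' deduction: six angle~$\pi$ cone points, each incident to at most two direct systolic Weierstrass arcs, gives at most $6\cdot 2/2 = 6$ such arcs, hence at most six corresponding homotopy classes.
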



\section{Indirect Weierstrass arcs} \label{sec:indirect}

The angle $4 \pi$ cone point $c^*$ divides each systolic indirect Weierstrass arc
on $X/ \langle \tau\rangle$ into two subarcs. We will call each such subarc a {\it prong}.

Let $C_{\epsilon}$ be the set of points at distance $\epsilon$ from $c^*$. 
For $\epsilon$ sufficiently small, the set $C_{\epsilon}$ 
is a topological circle, and each prong intersects $C_{\epsilon}$
exactly once. Thus, the prongs divide the circle $C_{\epsilon}$ into disjoint arcs.
Two prongs are said to be {\it adjacent} if they are joined by one of these
arcs, and the {\it angle} between two adjacent arcs is the arclength
divided by $\epsilon$.

If a systolic indirect Weierstrass arc is the union of two adjacent prongs
then the angle between the two prongs must be at least $\pi$.
Indeed, otherwise one can shorten the arc by perturbing it near $c^*$.

The sum of the lengths of any two prongs is at least $\sys(X)/2$. 
Indeed, otherwise the concatenation of the two prongs would lift  
to a geodesic loop on $X$ that would have length less than $\sys(X)$.
On the other hand, for each prong, there is another prong so that
the sum of the lengths of the two prongs equals $\sys(X)/2$. 
 
In particular, the minimum, $\ell$, of the lengths of the prongs is at most $\sys(X)/4$.
If $\ell < \sys(X)/4$, then there is a unique shortest prong and the 
remaining prongs have length $\sys(X)/2-\ell$. 

If $\ell = \sys(X)/4$, then each prong has length $\sys(X)/4$,
and each pair of adjacent prongs determines a systolic Weierstrass arc.
Since the angle between each adjacent pair is at least $\pi$ and $c^*$
has total angle $4 \pi$, there are at most four adjacent pairs
and if there are exactly four pairs, then each angle equals $\pi$.
In sum, we have

\begin{prop} \label{prop:equal-four-prongs}
If all of the prongs have the same length, then the number
of prongs is at most four. If there are exactly four such prongs, then 
the angle between each pair of adjacent prongs is exactly $\pi$. 
\end{prop}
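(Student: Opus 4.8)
The plan is to show that the prongs cut a small circle around $c^*$ into arcs, each of angle at least $\pi$, and that these angles add up to the cone angle $4\pi$ at $c^*$; both assertions of the proposition then drop out of the inequality $4\pi \geq k\pi$, where $k$ is the number of prongs.

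First I would record two elementary consequences of the hypothesis that all prongs have a common length. Since a shortest prong of length $\ell$ forces every other prong to have length $\sys(X)/2-\ell$, a common length must equal $\sys(X)/4$. And two distinct prongs must leave $c^*$ in distinct directions: otherwise they would be initial segments of a single geodesic ray from $c^*$, hence would coincide up to the first cone point that ray meets, which (a proper subsegment of a prong joining two cone points would have length $<\sys(X)/4<\sys(X)/2$, contradicting that cone points of $X/\langle\tau\rangle$ lie at distance at least $\sys(X)/2$) does not occur before the far endpoint; since both prongs have length $\sys(X)/4$, they would then be equal. So the $k$ prongs meet a small circle $S$ about $c^*$ in $k$ distinct points and partition it into $k$ arcs of angles $\theta_1,\dots,\theta_k$ with $\theta_1+\cdots+\theta_k=4\pi$.

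The substance of the argument is the claim that $\theta_i\geq\pi$ for each $i$. Let $P$ and $P'$ be the prongs bounding the $i$-th arc. Each runs from $c^*$ to an angle-$\pi$ cone point, so $\beta:=P\cup P'$ is a Weierstrass arc through $c^*$ of length $\sys(X)/2$. Because the present case presupposes that a prong exists, there is a systolic indirect Weierstrass arc, and hence the minimum length of a Weierstrass arc equals $\sys(X)/2$: a Weierstrass arc of length $L$ lifts to a nonseparating---hence noncontractible---simple closed curve of length $2L$ on $X$ (see Section \ref{sec:direct-arcs} and Corollary \ref{coro:slit}), while by Proposition \ref{prop:nonseparating} some nonseparating systole passes through two Weierstrass points and so projects to a Weierstrass arc of length $\sys(X)/2$. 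Thus $\beta$ is a shortest Weierstrass arc, its lift is a noncontractible geodesic loop of length $\sys(X)$---that is, a systole---and in particular $\beta$ is a geodesic. A geodesic through the angle-$4\pi$ cone point $c^*$ subtends an angle of at least $\pi$ on each of its two sides there, and one of these two angles is $\theta_i$; hence $\theta_i\geq\pi$. (Equivalently, were $\theta_i<\pi$, one could cut the corner of $\beta$ at $c^*$, exactly as in Remark \ref{remk:surgery}, to obtain a shorter Weierstrass arc, which is impossible.) Summing, $k\pi\leq 4\pi$, so $k\leq 4$; and if $k=4$, then $\theta_1+\theta_2+\theta_3+\theta_4=4\pi$ with each $\theta_i\geq\pi$ forces every $\theta_i=\pi$, which is the stated rigidity.

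The step I expect to be delicate---and where I would be most careful in the write-up---is the identification of $\beta$ with a genuine (geodesic) systolic Weierstrass arc, i.e.\ the verification that the closed curve covering $\beta$ on $X$ is noncontractible so that the corner-cutting really contradicts the definition of $\sys(X)$; in particular one wants that adjacent prongs terminate at distinct angle-$\pi$ cone points. This rests on the description, set up in Section \ref{sec:direct-arcs} and Corollary \ref{coro:slit}, of $p$-preimages of Weierstrass arcs as nonseparating simple closed curves on the genus-two surface $X$.
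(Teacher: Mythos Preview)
Your argument is correct and follows the same route as the paper. The paper's proof is the short paragraph immediately preceding the proposition: adjacent equal-length prongs concatenate to a systolic indirect Weierstrass arc, the corner at $c^*$ in such an arc must have angle at least $\pi$ on each side (else perturb to shorten), and since the angles sum to $4\pi$ one gets $k\le 4$, with equality forcing each $\theta_i=\pi$. You reproduce exactly this, only with more care: you spell out why the common length is $\sys(X)/4$, why distinct prongs leave $c^*$ in distinct directions, and---most importantly---why the concatenation $\beta=P\cup P'$ really lifts to a noncontractible loop of length $\sys(X)$, which the paper simply asserts (``each pair of adjacent prongs determines a systolic Weierstrass arc'').

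Two small remarks. First, in the paper's terminology a \emph{Weierstrass arc} is by definition the projection of a simple closed \emph{geodesic}, so strictly speaking you should not call $\beta$ a Weierstrass arc until after you have established $\theta_i\ge\pi$; the substance of your argument is unaffected, since what you actually use is that the $p$-preimage of an arc joining two distinct angle-$\pi$ cone points is nonseparating. Second, the delicate point you flag---that adjacent prongs end at \emph{distinct} angle-$\pi$ cone points---is exactly Lemma~\ref{lem:distinct-end-prongs}, whose proof does not rely on the proposition and could be invoked here; alternatively one can argue directly that the complement in $\Xtau$ of any arc joining two distinct angle-$\pi$ cone points is a disc containing the remaining four branch points, so its $p$-preimage is connected and the lifted curve is nonseparating.
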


We will show below that if one of the prongs is shorter than the others
then there are at most five prongs. To do this we will use the following lemma.

\begin{lem} \label{lem:distinct-end-prongs}
Two distinct prongs can not end at the same angle $\pi$ cone point, $c'$.
\end{lem}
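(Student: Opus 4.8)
The plan is to argue by contradiction: suppose two distinct prongs $\rho_1$ and $\rho_2$ both end at the angle $\pi$ cone point $c'$ (and both begin at $c^*$). Each $\rho_i$ is a geodesic segment of length at most $\sys(X)/2$ joining $c^*$ to $c'$, and since it is the image of half of a systolic Weierstrass arc, it is in fact a shortest (direct) segment joining $c^*$ to $c'$ in its homotopy class.

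The key step is to look at the angle the two prongs make at $c'$. Since $c'$ is a cone point of angle $\pi$, the two prongs meet there with angle at most $\pi$; more precisely, the two sides of $c'$ split the total angle $\pi$ into two interior angles, at least one of which is $\le \pi/2 < \pi$. On the other side, lift $\rho_1\cup\rho_2$ through the developing map (working on $\tX$ as in earlier sections): the developed images $\overline\rho_1$ and $\overline\rho_2$ are Euclidean line segments sharing the endpoint $\dev(c')$, making an angle strictly less than $\pi$ there, and each of length $\le \sys(X)/2$. Therefore the Euclidean distance between the two copies of $\dev(c^*)$ at the far ends is strictly less than the sum of the two lengths in the worst case, and a short elementary computation (law of cosines with included angle $<\pi$, in fact $\le\pi/2$) shows it is less than $\sys(X)/2$.

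Now I need to turn this small Euclidean distance into a contradiction. The two far endpoints of $\overline\rho_1$ and $\overline\rho_2$ are both images of the angle $4\pi$ cone point $c^*$ — but possibly of two \emph{different} lifts of $c^*$, so the straight segment between them in the plane does not immediately descend to a short arc on $X$. The honest way around this is the remark following Lemma~\ref{lem:at_most_one_direst_arc} (Remark~\ref{remk:surgery}): the concatenation $\rho_1^{-1}\cdot\rho_2$ is a piecewise-geodesic loop (or arc) on $X/\langle\tau\rangle$ based at $c'$, of total length $\ell(\rho_1)+\ell(\rho_2)\le\sys(X)$, whose two geodesic pieces meet at $c'$ with angle strictly less than $\pi$. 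If this concatenation is non-trivial (not null-homotopic rel endpoints), then by the surgery argument it can be shortened near $c'$, producing on the double cover $X$ a loop of length strictly less than $\sys(X)$ in a non-trivial class — a contradiction. If instead $\rho_1^{-1}\cdot\rho_2$ \emph{is} null-homotopic rel endpoints, then $\rho_1$ and $\rho_2$ are homotopic rel endpoints; since each is a \emph{shortest} geodesic in its homotopy class and such a minimizer on a CAT(0)-type space (cf. Proposition~\ref{prop:maxcyl} and the developing-map injectivity on convex sets) is unique, we get $\rho_1=\rho_2$, contradicting that they are distinct.

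The main obstacle is exactly this last dichotomy: ruling out that the two prongs are homotopic rel endpoints yet geometrically distinct, and making sure the ``perturb near $c^*$ or near $c'$'' shortening argument is applied to a loop that is genuinely non-contractible on $X$ (so that its shortened length really does violate the definition of the systole). Once the homotopic case is dispatched by uniqueness of shortest geodesics in a fixed homotopy class, and the non-homotopic case by Remark~\ref{remk:surgery} applied to the angle $<\pi$ at $c'$, the proof is complete.
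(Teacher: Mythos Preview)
Your argument has a genuine gap in the ``null-homotopic'' branch of the dichotomy. You are working on $X/\langle\tau\rangle$, which is a \emph{sphere}; thus the loop $\rho_1^{-1}\cdot\rho_2$ is \emph{always} null-homotopic, so your case (a) is vacuous and everything rests on case (b). There you invoke CAT(0) uniqueness of minimizing geodesics, but $X/\langle\tau\rangle$ carries six cone points of angle $\pi<2\pi$ and is therefore \emph{not} CAT(0); Proposition~\ref{prop:maxcyl} concerns $X$, not the quotient. Distinct geodesic arcs between $c^*$ and $c'$ on the quotient can certainly coexist even though they are homotopic rel endpoints on the sphere --- that is precisely the situation you are trying to rule out --- so the uniqueness appeal fails as stated.

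Your idea can be salvaged by moving the dichotomy up to $X$. Choose lifts $\tilde\rho_1,\tilde\rho_2$ both starting at $c_+^*$ and both ending at the Weierstrass point $w'$ above $c'$ (this is possible since $c'$ is a branch point). These are distinct geodesic arcs on $X$. Either they are homotopic rel endpoints on $X$, in which case CAT(0) uniqueness in $\tilde X$ (where all cone angles are $\geq 2\pi$) genuinely forces $\tilde\rho_1=\tilde\rho_2$, a contradiction; or the loop $\tilde\rho_1^{-1}\cdot\tilde\rho_2$ is non-contractible on $X$ and has length $\ell(\rho_1)+\ell(\rho_2)<\sys(X)$, again a contradiction. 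The paper takes a different route: it observes that $\alpha=\rho_1\cup\rho_2$ bounds a disc $D$ on the sphere containing at most two of the remaining angle-$\pi$ cone points, uses Gauss--Bonnet (``no Euclidean bigons'') to see $D$ contains at least one, and then compares $\ell(\alpha)$ with the length of the doubled minimal arc joining the cone points in $D$, which is at least $\sys(X)$.
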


\begin{proof}
Suppose not. Then the concatenation, $\alpha$, of the two prongs would be
a closed curve that divides the sphere $X/\langle \tau \rangle$ into two discs.
Since there are five other cone points, one of the discs, $D$, would contain
at most two cone points. There are no Euclidean bigons and so $D$ would
have to contain at least one cone point.

If $D$ were to contain two angle $\pi$ cone points,
then $\alpha$ would be homotopic to the concatenation of the two oriented
minimal arcs joining the two cone points. The length of the
unoriented minimal arc is at least $\sys(X)/2$, and hence, since
the length of each prong is less than $\sys(X)/2$, we would have a contradiction.

If $D$ were to contain one angle $\pi$ cone point $c$, then $\alpha$
would be homotopic to the concatenation of the two oriented
minimal arcs joining $c$ and $c'$. We would then arrive at a contradiction as
in the case of two cone points.
\end{proof}

Since there are exactly six Weierstrass points,
Lemma \ref{lem:distinct-end-prongs} implies that there
are at most six prongs. In fact,
we have the following.

\begin{prop} \label{prop:at-most-five-prongs}
There are at most five prongs. 
\end{prop}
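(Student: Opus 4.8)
The plan is to rule out the possibility of exactly six prongs, given that Lemma \ref{lem:distinct-end-prongs} already bounds the number of prongs by six. So suppose for contradiction that there are exactly six prongs; by Lemma \ref{lem:distinct-end-prongs} each of the six angle $\pi$ cone points $c_1, \ldots, c_6$ is the endpoint of exactly one prong. I would first dispense with the easy subcase: if all six prongs have the same length, then each prong has length $\sys(X)/4$, and Proposition \ref{prop:equal-four-prongs} says there are at most four prongs, a contradiction. Hence we may assume there is a short prong, say $P_0$ of length $\ell < \sys(X)/4$, and the remaining five prongs $P_1, \ldots, P_5$ each have length $\sys(X)/2 - \ell > \sys(X)/4$.

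Now I would analyze the cyclic arrangement of the six prongs around the cone point $c^*$ of angle $4\pi$. Label the prongs $P_0, \ldots, P_5$ in cyclic order around $c^*$ (after relabeling, $P_0$ is the short one), with consecutive angles $\psi_0, \psi_1, \ldots, \psi_5$ between adjacent prongs summing to $4\pi$. Each indirect systolic Weierstrass arc is a union of two adjacent prongs, and for such a union the angle between them must be at least $\pi$ (as noted before Proposition \ref{prop:equal-four-prongs}). The key combinatorial point: a pair of adjacent prongs $P_i, P_j$ can form a systolic Weierstrass arc only if $\ell(P_i) + \ell(P_j) = \sys(X)/2$, which forces exactly one of them to be the short prong $P_0$. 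Thus the only candidate systolic arcs among our six prongs are $\{P_0, P_1\}$ and $\{P_0, P_5\}$ (the two prongs adjacent to $P_0$), so only the two angles $\psi_5$ and $\psi_0$ flanking $P_0$ need be $\geq \pi$. But every prong must actually belong to \emph{some} systolic Weierstrass arc — a prong is by definition a subarc of a systolic indirect Weierstrass arc — and $P_2, P_3, P_4$ cannot pair with any neighbor to form one. This is the contradiction: $P_2$ (say) lies in a systolic indirect Weierstrass arc, but that arc is a union of $P_2$ with an adjacent prong, and no adjacent prong has the complementary length $\sys(X)/2 - \ell(P_2) = \ell$ unless that neighbor is $P_0$; since $P_2$ is not adjacent to $P_0$, we have a contradiction.

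The main obstacle is making precise the claim ``no adjacent prong of $P_2$ has length $\ell$'': a priori the multiset of prong lengths is $\{\ell, \sys(X)/2-\ell, \sys(X)/2-\ell, \sys(X)/2-\ell, \sys(X)/2-\ell, \sys(X)/2-\ell\}$ with $\ell$ occurring once, so indeed only $P_0$ has the short length, and once we know every prong lies in a systolic Weierstrass arc formed from two adjacent prongs with complementary lengths, the prong $P_2$ — having both neighbors $P_1, P_3$ of the long length $\sys(X)/2-\ell \neq \ell$ — cannot complete to such an arc. One subtlety to address carefully is the footnote's remark that a single prong may lie in more than one Weierstrass arc; this does not affect the argument, since the issue is that $P_2$ lies in \emph{no} Weierstrass arc at all under the six-prong hypothesis with a short prong. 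I would also double-check the degenerate reading where two of the six prongs are allowed to coincide as point sets; but distinct prongs here are distinguished by their position around $c^*$, and the length bookkeeping is unaffected. Assembling these observations completes the proof that there are at most five prongs.
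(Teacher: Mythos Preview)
Your argument has a genuine gap at the crucial step. You assert that ``each indirect systolic Weierstrass arc is a union of two adjacent prongs,'' and then conclude that $P_2$ must pair with a \emph{neighbor} to form its systolic arc. But the paper's statement before Proposition~\ref{prop:equal-four-prongs} says only that \emph{if} a systolic arc is the union of two adjacent prongs \emph{then} the angle between them is at least~$\pi$; it does not claim that every systolic indirect Weierstrass arc is a union of adjacent prongs. In fact there is no reason the two halves of a systolic arc should be adjacent in the cyclic order of \emph{all} prongs around~$c^*$: other prongs, coming from other systolic arcs, may well emanate into the angular sector between them.

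Tracing your own logic, each long prong $P_j$ ($j=1,\dots,5$) must indeed pair with $P_0$ in a systolic arc, since $P_0$ is the unique prong of the complementary length~$\ell$. But for $P_0\cup P_j$ to be a geodesic arc one only needs the angle at~$c^*$ between $P_0$ and $P_j$ to be at least~$\pi$ on both sides, i.e.\ the counterclockwise angle $\theta_j$ from $P_0$ to $P_j$ lies in $[\pi,3\pi]$. Five distinct values $\pi\le\theta_1<\cdots<\theta_5\le 3\pi$ fit comfortably in that window, so no contradiction arises from the cyclic combinatorics alone. The paper's proof closes this gap with a genuinely geometric argument: it cuts along the six prongs to obtain a hexagon in the plane, uses the angle and side-length constraints (including that adjacent long prongs make angle $>\pi/3$ since $L<\sys(X)/2$), and then rules out the hexagon by a case analysis on the positions of the vertices $v_3,v_4$. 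Your combinatorial shortcut does not substitute for this.
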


\begin{proof}
Suppose to the contrary that there are six prongs. Let $e_1$
denote the unique shortest prong, let $\ell$ be its length,
and let $c_1$ denote its endpoint. Let $e_1, \ldots, e_6$ be a cyclic
ordering of the remaining prongs, let $L=\sys(X)/2-\ell$ denote
their common length, and let $c_2, \ldots, c_6$ denote their respective endpoints.

Since $\ell(e_1+e_2)= \sys(X)/2= \ell(e_1+e_6)$, the
angles $\angle c_1 c^* c_2$ and $\angle c_1 c^* c_2$ are each at least $\pi$.
(Otherwise, by perturbation near the $4 \pi$ cone point we could
construct a direct Weierstrass arc with length less than $\sys(X)/2$.)
Each of the other four angles between adjacent prongs is greater than $\pi/3$.
Indeed, otherwise, since $L < \sys(X)/2$, we would have a segment joining
two angle $\pi$ cone points having length less than $\sys(X)/2$ which contradicts
the definition of systole. Since $\angle c_1 c^* c_2 + \angle c_1 c^* c_6 \geq 2\pi$
it follows that each of these four angles is less than $\pi$. 
Moreover, since the angle at $c^*$ equals $4\pi$,
the sum $\angle c_1 c^* c_2 + \angle c_1 c^* c_6 < 8\pi/3< 3 \pi$
and individually $\angle c_1 c^* c_2 < 5\pi/3$ and $\angle c_1 c^* c_6 < 5\pi/3$.

By cutting along the prongs
and taking the length space completion, we obtain a closed topological
disc $D$ whose boundary consists of a topological disc bounded by six geodesic
segments. The midpoint of each segment corresponds to an end point of a prong.
The developing map provides an immersion of $D$ into the Euclidean plane.
Since $\angle c_k c^* c_{k+1} + \angle c_1 c^* c_2 < 3 \pi$ and
$\angle c_i c^* c_{i+1} < \pi$ for $i=2,\ldots, 5$, this immersion is
an embedding. In other words, we may regard $D$ as Euclidean hexagon.
 
Let $v_i$ denote the vertex of $D$ corresponding to $c^*$ that lies between
$c_{i-1}$ and $c_{i}$. The length of the side $\overline{v_1v_2}$ is $2\ell$, and the
common length of the other sides is $2 L$.
From above, the interior angles at $v_1$ and $v_6$ are between $\pi$ 
and $5\pi/3$, and the angles at the other four vertices lie between $\pi/3$ and $\pi$.
Without loss of generality, $c_1 =(0,0)$, $v_1=(\ell,0)$, $v_6=(-\ell,0)$ and
an $H$-neighborhood of $c_1$ lies in the upper half plane 
(see Figure \ref{Fig-at-most-5}).

\begin{figure}[h]\label{Fig-at-most-5}
\leavevmode \SetLabels
\L(.53*.235) $v_1$\\
\L(.685*.13) $v_2$\\%
\L(.64*.775) $v_3$\\%
\L(.32*,86) $v_4$\\
\L(.29*.20) $v_5$\\%
\L(.45*.25) $v_6$\\%
\endSetLabels
\begin{center}
\AffixLabels{\centerline{\epsfig{file =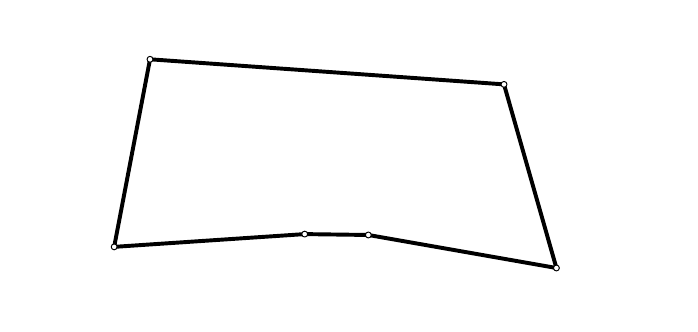,width=7cm,angle=0}}}
\vspace{-24pt}
\end{center}
\caption{The points $v_i$, $i=1,\hdots,6$.  \label{Fig-at-most-5}}
\end{figure}

Since the angle at $v_2$ (resp. $v_5$) is greater than $\pi/3$, and the edges
$\overline{v_1v_2}$ and $\overline{v_2v_3}$
(resp.$\overline{v_4v_5}$ and $\overline{v_5v_6}$) have length $2 L$,
the vertex $v_3$ (resp. $v_4$) lies outside the ball of radius $2L$
centered at $v_1$ (resp. $v_6$). It follows that if
both $v_4$ and $v_3$ both lie in the lower half plane then
the shortest arc in $H$ that joins $v_4$ to $v_3$ has distance at
least $2L+ 2 \ell$. This contradicts the equality $|v_3v_4|= 2 L$.

Since the angle at $v_1$
(resp. $v_6$) is at least $\pi$ and the angle at $v_2$ (resp. $v_5$) is
greater than $\pi/3$, if $v_3$ (resp. $v_4$) lies in the upper half plane,
then $v_3$ (resp. $v_4$) lies in the half plane $V_+=\{(x_1,x_2)~ |~ x_1 > \ell+L\}$
(resp. $V_-=\{(x_1,x_2)~ |~ x_1 < -\ell-L\}$).
Since the distance between $U_+$ and $U_-$ equals $2L+2 \ell$,
if $v_3$ and $v_4$ both lie in $U$, then we contradict $|v_3v_4|= 2 L$.

If $v_3$ lies in the upper half plane and
that $v_4$ lies in the lower half plane but not in $U_-$,
then $v_4$ lies in the half plane that is bounded by the line trough $v_3$ and $v_6$
and contains $v_1$. In particular, the shortest path in $D$ between $v_3$ and $v_4$
passes through $v_6$. But the distance from $v_6$ to $U_+$ is equal to $2L + \ell$,
and the distance form $v_6$ to $v_4$ is greater than $2L$. Thus, we contradict
$|v_3v_4|= 2 L$. 

A symmetric argument rules out the remaining case in which
the r\^oles of $v_3$ and $v_4$ are reversed.
\end{proof}

\begin{thm} \label{thm:at-most-six-indirect}
There are at most six systolic indirect Weierstrass arcs.
Equality occurs if and only if there are exactly four prongs 
and these four prongs have the same length. 
\end{thm}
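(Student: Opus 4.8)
*There are at most six systolic indirect Weierstrass arcs. Equality occurs if and only if there are exactly four prongs and these four prongs have the same length.*

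---

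The plan is to combine the two structural dichotomies already established in this section---Proposition \ref{prop:equal-four-prongs} for the equal-length case and Proposition \ref{prop:at-most-five-prongs} for the presence of a short prong---with a counting argument that passes from prongs to arcs. The central combinatorial observation is that every systolic indirect Weierstrass arc is a concatenation of two adjacent prongs meeting at $c^*$ with interior angle at least $\pi$; hence the number of such arcs is bounded by the number of adjacent pairs of prongs whose angle is $\geq \pi$, and distinct arcs give distinct \emph{unordered} adjacent pairs (though, as noted in the footnote, one prong may appear in two arcs).

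First I would dispose of the equal-length case. If all prongs have the same length, then every adjacent pair of prongs forms a systolic indirect Weierstrass arc precisely when the angle between them is $\geq \pi$. Going once around $c^*$, the $k$ angles between consecutive prongs sum to $4\pi$, so at most four of them can be $\geq \pi$; moreover Proposition \ref{prop:equal-four-prongs} already tells us $k \leq 4$, and when $k=4$ all four angles equal $\pi$, so all four cyclically adjacent pairs yield systolic arcs. When $k=4$ this gives exactly four arcs -- wait, I should be more careful: with four prongs cyclically arranged there are four adjacent pairs, giving at most four indirect systolic arcs, not six. So the ``six'' in the equality statement must account for something more. Re-examining: with four prongs $e_1,e_2,e_3,e_4$ in cyclic order around $c^*$ and all four angles equal to $\pi$, the ``cyclic order'' is really a \emph{linear} order repeated, i.e. $e_1$ is adjacent to $e_2$ \emph{and} $e_4$, but also the pair $(e_1,e_3)$ may be realized because $e_2$ and $e_4$ are antipodal directions at $c^*$: with all angles $\pi$, prong $e_3$ makes angle $2\pi$ with $e_1$, and a Weierstrass arc may also be formed by going ``around the long way.'' I would count: the six pairs are all $\binom{4}{2}=6$ pairs of the four prongs, since with angles $\pi,\pi,\pi,\pi$ every pair of prongs meets at $c^*$ with angle a multiple of $\pi$ that is $\geq\pi$, hence every pair concatenates to a locally geodesic (hence systolic, by length $\sys(X)/2$) indirect arc. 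This gives exactly six, and conversely six arcs forces six pairs among $k$ prongs with all relevant angles $\geq \pi$, which by the angle-sum constraint forces $k=4$ with all angles $\pi$.

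Next I would handle the short-prong case and show it yields strictly fewer than six arcs. By Proposition \ref{prop:at-most-five-prongs} there are at most five prongs, one short (length $\ell$) and the rest of common length $L=\sys(X)/2-\ell$. A systolic indirect arc of length $\sys(X)/2$ must be either the concatenation of the short prong with a long prong (total $\ell+L=\sys(X)/2$), or of two long prongs -- but two long prongs have total length $2L=\sys(X)-2\ell>\sys(X)/2$ unless $\ell=\sys(X)/4$, which would make $L=\ell$, contradicting that the short prong is strictly shorter. Hence \emph{every} systolic indirect Weierstrass arc contains the unique short prong $e_1$. So the arcs correspond to the long prongs adjacent to $e_1$ with angle $\geq\pi$; since $e_1$ has two neighbors, there are at most two such arcs -- in any case strictly fewer than six. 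Combining the two cases: the total is at most six, with equality only in the equal-length, four-prong configuration, and in that configuration equality holds.

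The main obstacle I anticipate is pinning down the correct enumeration in the four-equal-prongs case -- specifically, verifying that all $\binom{4}{2}=6$ pairs genuinely give distinct homotopy classes of \emph{systolic} arcs rather than, say, some coinciding or failing to be geodesic. This requires checking that the angle between any two of the four prongs (not just cyclically adjacent ones) is at least $\pi$, which follows from the angles $\pi,\pi,\pi,\pi$ summing to $4\pi$ around $c^*$, and that the resulting closed curve on $X$ is simple and realizes the systole. One must also confirm these six arcs are pairwise non-homotopic -- this uses that their endpoints among $\{c_1,\dots,c_6\}$ are distinct pairs, together with Lemma \ref{lem:distinct-end-prongs}, so the four prongs end at four distinct angle-$\pi$ cone points and the six pairs of endpoints are six distinct pairs. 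I would also double-check the edge case $\ell = \sys(X)/4$ in the ``short prong'' analysis to be sure it is genuinely excluded by the strict inequality $\ell < \sys(X)/4$ assumed there.
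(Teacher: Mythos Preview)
Your overall strategy coincides with the paper's: split into the equal-length case (handled via Proposition~\ref{prop:equal-four-prongs}) and the short-prong case (handled via Proposition~\ref{prop:at-most-five-prongs}), and in the latter use that every systolic indirect arc must contain the unique short prong because two long prongs concatenate to length $2L>\sys(X)/2$. Your equal-length analysis, once you correct yourself mid-argument and count all $\binom{4}{2}=6$ pairs rather than only the four cyclically adjacent ones, agrees with the paper.

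There is, however, a real slip in the short-prong case. Your ``central combinatorial observation'' that a systolic indirect Weierstrass arc is a concatenation of two \emph{adjacent} prongs is not correct: the two prongs forming an arc need not be cyclically adjacent among all prongs (you implicitly acknowledge this when you admit the opposite pair $(e_1,e_3)$ in the four-equal-prong configuration). So the step ``$e_1$ has two neighbors, hence at most two such arcs'' is unfounded. In fact each long prong $e_j$ is by definition part of some systolic indirect arc, and since two long prongs cannot pair up, that arc must be $e_1\cup e_j$; hence with $k-1$ long prongs there are exactly $k-1$ systolic indirect arcs. Proposition~\ref{prop:at-most-five-prongs} gives $k\le 5$, so this is at most four---still strictly fewer than six, so your conclusion survives, but via the count of long prongs rather than the (incorrect) adjacency restriction. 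The paper records this bound loosely as ``at most five''; the tighter ``at most four'' is what is actually used later in \S9.
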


\begin{proof}
If the prongs are not all of the same length,
then one prong has length less than $\sys(X)/4$
and hence the others have length greater than $\sys(X)/4$. Therefore,
concatenations of none of the others constitute a systolic Weierstrass arcs.
By Proposition \ref{prop:at-most-five-prongs}, there are at most five prongs
and hence at most five systolic indirect Weierstrass arcs.

If the prongs all have the same length---namely $\sys(X)/4$---then by Proposition
\ref{prop:equal-four-prongs} there are at $n \leq 4$ prongs.
Each concatenation of a pair prongs constitutes a systolic Weierstrass arc,
and so there are exactly $n \cdot (n-1)/2$ prongs and hence at most six.
Six occurs if and only if $n=4$. 
\end{proof}


\section{A separating systole}

In this section we wish to prove the following:

\begin{thm} \label{thm:separating-at-most-nine}
If $X$ has a separating systole $\alpha$, then $X$ has at most nine homotopy classes
of closed curves with systolic representatives.
\end{thm}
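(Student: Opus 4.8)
The plan is to reduce the statement, using the structure theory of Corollary~\ref{coro:slit}, to a single assertion about prongs, and then to bound the number of prongs. Throughout, let $\alpha$ be a separating systole. By Corollary~\ref{coro:slit} the curve $\alpha$ is preserved by $\tau$ and cuts $X$ into two slit tori $Y_+$ and $Y_-$; since a flat one--holed torus cannot have smooth geodesic boundary, $\alpha$ must pass through \emph{both} cone points $c_+,c_-$, the common endpoints of the slits $a_+\subset\overline{Y_+}$ and $a_-\subset\overline{Y_-}$. The midpoints $m\in a_+$ and $m'\in a_-$ are fixed by $\tau$, hence are Weierstrass points, and they divide $\alpha$ into two geodesic arcs each of length $\sys(X)/2$. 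Projecting, $p(\alpha)$ is an indirect systolic Weierstrass arc joining $p(m)$ to $p(m')$ through $c^*$, and its two prongs are exactly $p(a_+)$ and $p(a_-)$, each of length $\sys(X)/4$. Note that the $p$--preimage of \emph{this particular} indirect Weierstrass arc is the separating curve $\alpha$, so it is not one of the indirect Weierstrass arcs that arise from nonseparating systoles.

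The main claim I would then establish is: \emph{if $X$ has a separating systole, then $p(a_+)$ and $p(a_-)$ are its only prongs.} Granting this, the count is immediate. By Proposition~\ref{prop:nonseparating} (and its analogue for separating curves via Corollary~\ref{coro:slit}), every homotopy class of systole is represented by a $\tau$--symmetric geodesic through two Weierstrass points, hence corresponds to a systolic Weierstrass arc; a direct one always has nonseparating $p$--preimage (a direct geodesic lies in a Euclidean cylinder), and by Theorem~\ref{prop:bivalent-direct} there are at most six direct systolic Weierstrass arcs. Since $\{p(a_+),p(a_-)\}$ is the only pair of prongs, $p(\alpha)$ is the \emph{only} indirect systolic Weierstrass arc, and its preimage is the separating curve $\alpha$. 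Consequently there is at most one separating systole (a second one $\alpha'$ not homotopic to $\alpha$ would yield a second indirect systolic Weierstrass arc $p(\alpha')\neq p(\alpha)$), and there are no indirect Weierstrass arcs contributing nonseparating systoles. Altogether $X$ has at most $1+6=7\le 9$ homotopy classes of systolic loops.

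To prove the main claim I would argue by contradiction. Suppose there were a third prong $\delta$; by Lemma~\ref{lem:distinct-end-prongs} its endpoint is a $\pi$--cone point distinct from $p(m),p(m')$, and since two prongs ($p(a_+),p(a_-)$) already have length $\sys(X)/4$, every prong — in particular $\delta$ — has this length. Lifting $\delta$ and $a_\pm$ to $X$ and using that distinct Weierstrass points lie at distance at least $\sys(X)/2$ (an arc between them projects to an arc joining two $\pi$--cone points, which has length at least $\sys(X)/2$, as in the proof of Lemma~\ref{lem:distinct-end-prongs}), one finds that $m$, $m'$ and the endpoint $w$ of the lifted third prong are three distinct Weierstrass points each at distance exactly $\sys(X)/4$ from both $c_+$ and $c_-$, with every relevant triangle inequality an equality; hence the minimizing geodesics among $\{m,m',w,c_+,c_-\}$ all run through $c_+$ or $c_-$. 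Developing this configuration into the plane, and using $d_\omega(c_+,c_-)=\sys(X)/2$ (which follows from the standard argument of doubling a shortest arc by $\tau$), one then reaches a contradiction.

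\textbf{Main obstacle.} The planar--geometry step at the end is where the real work lies: one must exclude a configuration of three distinct Weierstrass points that are all equidistant, at the critical value $\sys(X)/4$, from the two endpoints of a slit of the critical length $\sys(X)/2$. Carrying this out requires keeping careful track of the two $2\pi$--sectors into which $\alpha$ divides the total angle at each of $c_\pm$, and of which side of $\alpha$ (i.e.\ which of $Y_+,Y_-$) each Weierstrass point lies on; the angle bookkeeping alone is not enough, and one needs to use the slit torus geometry — in particular that $m,m'$ lie \emph{on} $\alpha$ while $w$ lies in the interior of a slit torus — to force the incompatibility. I would expect the uniqueness of the separating systole and the ``no extra prong'' statement both to be consequences of this one planar estimate, so it is the single step deserving the most care.
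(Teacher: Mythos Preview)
Your proposal has a genuine gap: the ``main claim'' that $p(a_+)$ and $p(a_-)$ are the only prongs is never proven. You sketch a contradiction argument---three Weierstrass points equidistant from $c_+$ and $c_-$, develop into the plane, find an incompatibility---but then concede that ``the planar--geometry step at the end is where the real work lies'' and that it ``requires keeping careful track'' of sectors and sides. That step is not merely bookkeeping; as stated you have not produced any inequality that actually fails, and the configuration you describe (three points at distance $\sys(X)/4$ from both endpoints of a segment of length $\sys(X)/2$) is not obviously impossible in a cone manifold with a $4\pi$ point. Moreover, your derivation of ``at most one separating systole'' from the main claim is fragile: a second separating systole $\alpha'$ would contribute its own pair of ``prongs'' in your sense, so you would need the main claim to already cover that case, which your sketch does not address.

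The paper avoids this difficulty entirely by a short angle argument. First (Lemma~\ref{lem:at-most-one-sep}) it proves directly that there is at most one separating systole: two such curves would intersect in at least four points, each intersection would have to occur at a cone point (else cut--and--paste shortens), and there are only two cone points. Second (Lemma~\ref{lem:sep-no-short-prong}) it shows that every prong has length exactly $\sys(X)/4$ and makes angle at least $\pi$ with $p(\alpha)$ at $c^*$; the proof is just the surgery of Remark~\ref{remk:surgery} applied to the concatenation of the lifted prong with one half $\alpha_\pm$ of $\alpha$. Now count angles at $c^*$: the two rays of $p(\alpha)$ together with the $k$ prong rays give $k+2$ rays, and every adjacent pair is separated by at least $\pi$ (prong--to--$p(\alpha)$ by the lemma; prong--to--prong because two prongs of length $\sys(X)/4$ concatenate to a systolic arc; $p(\alpha)$--to--$p(\alpha)$ because $\alpha$ is a geodesic through $c_+$). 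Since the total angle is $4\pi$, we get $k+2\le 4$, hence at most two prongs and at most two indirect systolic Weierstrass arcs. Together with the six direct arcs from Theorem~\ref{prop:bivalent-direct} and the single separating systole, this gives nine. No planar development is needed.

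A minor point: your assertion that $p(a_\pm)$ each have length $\sys(X)/4$ is correct but not for the reason you give; it follows because $\hol(\alpha)=0$ forces the two slits to be parallel translates of one another, hence $|a_+|=|a_-|=\sys(X)/2$.
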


We will use the argument explained in Remark \ref{remk:surgery} in the three lemmas that follow. We first observe:

\begin{lem} \label{lem:at-most-one-sep}
$X$ has at most one separating systole.
\end{lem}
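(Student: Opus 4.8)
I want to show that $X$ cannot contain two distinct separating systoles. Suppose for contradiction that $\alpha$ and $\alpha'$ are two distinct separating simple closed geodesics, both of length $\sys(X)$. By Corollary \ref{coro:slit}, each of $\alpha, \alpha'$ is preserved by the hyperelliptic involution $\tau$, passes through two of the six Weierstrass points (one on each "half" being the midpoint of the slit), and separates $X$ into two slit tori each containing exactly three Weierstrass points. The first thing to nail down is the intersection pattern of $\alpha$ and $\alpha'$.

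\textbf{Step 1: the curves must intersect.} If $\alpha \cap \alpha' = \emptyset$, then $\alpha'$ lies in one of the two slit-torus components $Y_+$ of $X \setminus \alpha$. But $\alpha'$ is separating in $X$, hence separating in $Y_+$ (its complement in $X$ has two components, and the one not meeting $\alpha$ must sit inside $Y_+$; since $\alpha'$ is homologically trivial, it bounds in $Y_+$). A separating simple closed curve in a one-holed torus bounds a subsurface that is either a disk or an annulus; a geodesic cannot bound a disk, and it cannot be homotopic to the boundary slit either, since by Corollary \ref{coro:slit} each half of $X\setminus \alpha'$ contains three Weierstrass points, which would force three Weierstrass points into $Y_+$'s boundary region — a contradiction with the count. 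So $\alpha$ and $\alpha'$ intersect.

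\textbf{Step 2: control the intersections via surgery.} Since $\alpha$, $\alpha'$ are distinct simple closed geodesics that intersect, every intersection point must be a cone point: at a smooth intersection point one of the four angles is $<\pi$, and the surgery argument of Remark \ref{remk:surgery} (concatenate the shorter sub-arc of $\alpha$ between consecutive intersection points with the shorter sub-arc of $\alpha'$, then perturb near the transverse smooth crossing) produces a non-contractible loop strictly shorter than $\sys(X)$ — here one must check the resulting loop is non-contractible, which follows because $i(\alpha,\alpha')>0$ forces the surgered curve to be essential. So all intersections of $\alpha$ and $\alpha'$ occur at the cone points $c_+, c_-$; in particular $|\alpha \cap \alpha'| \le 2$ and $i(\alpha, \alpha') \le 1$ (as both are simple and separating). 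But two distinct separating simple closed curves on a genus two surface have \emph{even} geometric intersection number, hence $i(\alpha,\alpha') = 0$, so $\alpha$ and $\alpha'$ meet only at cone points without crossing, meeting at one or both of $c_\pm$.

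\textbf{Step 3: the final contradiction.} Now $\alpha$ and $\alpha'$ are homotopically distinct, disjoint-up-to-cone-points separating curves; together with the cone points they still cut $X$ into pieces, and since $i(\alpha,\alpha')=0$ the curve $\alpha'$ is freely homotopic into one complementary component $Y_+$ of $\alpha$ (pushing off the cone points). This returns us to the situation of Step 1: $\alpha'$ would be an essential separating geodesic inside the slit torus $Y_+$, which we have already ruled out by the Weierstrass-point count of Corollary \ref{coro:slit}. This contradiction shows $X$ has at most one separating systole. \emph{The main obstacle} is Step 2: one must be careful that when all intersection points are cone points the ``shorter arc'' surgery still applies — the angles at the cone point $c_\pm$ (total $4\pi$) need not both be $\ge \pi$ on the relevant side, and one has to argue that if some angle between an $\alpha$-arc and an $\alpha'$-arc at $c_\pm$ were $<\pi$ the same shortening works, while if all such angles are $\ge\pi$ the combinatorics at a $4\pi$ cone point with only two curves through it already forces $i(\alpha,\alpha')=0$. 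Once the intersection number is pinned to $0$, the topological argument with Corollary \ref{coro:slit} closes everything.
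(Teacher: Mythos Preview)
The paper's proof is considerably shorter: once the surgery argument forces every intersection point of the two separating systoles to be a $4\pi$ cone point, one simply invokes the topological fact that any two non-isotopic separating simple closed curves on a closed genus-two surface have geometric intersection number at least $4$. Since there are only two cone points (and geodesics in a CAT(0) space realize minimal intersection), this is an immediate contradiction.

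Your route is different---you try to pin $i(\alpha,\alpha')$ down to $0$ and then derive a contradiction from disjointness---and it has a genuine gap in Step~2. The assertion ``$i(\alpha,\alpha')\le 1$'' is unjustified: from $|\alpha\cap\alpha'|\le 2$ one gets only $i\le 2$, and parity (both curves are null-homologous) then leaves $i\in\{0,2\}$. Ruling out $i=2$ requires precisely the lower bound $i\ge 4$ for non-isotopic separating pairs in genus two---the very fact the paper uses---and once you have it, the contradiction is immediate and Steps~1 and~3 become superfluous. Your closing paragraph acknowledges Step~2 as the obstacle but does not supply this missing topological ingredient; the difficulty is not the cone-point angle bookkeeping but the absent lower bound on $i$.

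A smaller issue: the Weierstrass-count argument in Step~1 does not actually rule out $\alpha'$ being boundary-parallel in $Y_+$; the three Weierstrass points of $Y_+$ could all lie in the sub-torus cut off by $\alpha'$, and no contradiction follows. The quick way to see $\alpha\cap\alpha'\ne\emptyset$ is that, by (the proof of) Corollary~\ref{coro:slit}, every separating geodesic passes through both cone points $c_\pm$, so those points already lie in the intersection.
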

\begin{proof}
Suppose there are two separating systoles. Each angle of intersection between the two curves 
must be at least $\pi$, otherwise one can find a shorter non-homotopically trivial curve by 
a cut-and-paste argument. Hence intersection points between the systoles occur at the $4\pi$ 
cone points. But as any two separating curves intersect at least $4$ times, this is impossible 
because there are only two angle $4\pi$ cone points.
\end{proof}

\begin{lem} \label{lem:sep-and-direct-n-intersect}
If $\alpha$ is a separating systole and $\gamma$ is a direct systolic Weierstrass arc,
then $\gamma$ does not intersect the projection of $\alpha$ to $X/ \langle \tau \rangle$.
\end{lem}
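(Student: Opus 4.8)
The plan is to argue by contradiction: assuming $\gamma$ meets $p(\alpha)$, I would produce a non‑contractible closed curve on $X$ of length less than $\sys(X)$. Set $\tgamma := p^{-1}(\gamma)$. By the correspondence recorded at the start of §\ref{sec:direct-arcs} between Weierstrass arcs and nonseparating simple closed geodesics, and because $\gamma$ is systolic, $\tgamma$ is a nonseparating systole; since $\gamma$ is \emph{direct} it misses $c^*$, so $\tgamma$ misses both cone points $c_+,c_-$ and therefore lies in the flat (smooth) part of $X$. If $\gamma$ meets $p(\alpha)$ then $\tgamma\cap\alpha\neq\emptyset$, and since $\tgamma$ is distinct from $\alpha$ (which passes through $c_\pm$) and a geodesic in the flat part is determined locally by a point and a direction, $\tgamma$ and $\alpha$ cross transversally at each point of $\tgamma\cap\alpha$.

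The key preliminary observation is that $\alpha$ contains \emph{no} Weierstrass point: by Corollary \ref{coro:slit} the two slit‑torus components of $X-\alpha$ contain three Weierstrass points each, which already accounts for all six. Hence $\tau$ restricts to a fixed‑point‑free isometric involution of the circle $\alpha$, which must be the rotation by $\sys(X)/2$; in particular, for every $q\in\alpha$ the two arcs into which $q$ and $\tau(q)$ divide $\alpha$ each have length exactly $\sys(X)/2$. Since $\tgamma$ and $\alpha$ are both $\tau$‑invariant, $\tau$ permutes $\tgamma\cap\alpha$, and a $\tau$‑fixed intersection point would be a Weierstrass point lying on $\alpha$, which is impossible; so the intersection points come in pairs $\{q,\tau(q)\}$. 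I would fix one such pair $q_1,q_2=\tau(q_1)$, cut $\tgamma$ at $q_1,q_2$ into arcs $a_1,a_2$ with $\ell(a_1)+\ell(a_2)=\sys(X)$ (say $\ell(a_1)\le\sys(X)/2$), and cut $\alpha$ at $q_1,q_2$ into arcs $b_1,b_2$, which by the previous sentence both have length $\sys(X)/2$.

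Now form the two piecewise‑geodesic closed curves $P=a_1\cup b_1$ and $P'=a_1\cup b_2$. Each has length $\ell(a_1)+\sys(X)/2\le\sys(X)$, and each has a corner at $q_1$ and at $q_2$ whose interior angle (on one side) is strictly less than $\pi$, because $\tgamma$ and $\alpha$ cross transversally there. If both $P$ and $P'$ were null‑homotopic, then $a_1$ would be homotopic rel $\{q_1,q_2\}$ to both $b_1$ and $\overline{b_2}$, forcing $b_1\simeq\overline{b_2}$ rel endpoints and hence $\alpha=b_1*b_2$ null‑homotopic — contradicting that the systole $\alpha$ is non‑contractible. So one of $P,P'$, say $P$, is non‑contractible; being piecewise geodesic with a corner of angle $<\pi$ it is not length‑minimizing in its free homotopy class (the surgery argument of Remark \ref{remk:surgery}), so it is freely homotopic to a non‑contractible loop of length strictly less than $\ell(P)\le\sys(X)$, contradicting the definition of $\sys(X)$. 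I expect the main obstacle to be exactly the preliminary observation that $\alpha$ avoids the Weierstrass points: this is what forces $q_1$ and $\tau(q_1)$ to be antipodal on $\alpha$ and hence makes the estimate $\ell(P)\le\sys(X)$ hold — without it the crude surgery bound is only $3\sys(X)/2$, which is useless — and it is also the place where the separating structure from Corollary \ref{coro:slit} is genuinely used.
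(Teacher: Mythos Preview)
Your argument is correct, and it takes a genuinely different route from the paper's.

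The paper's proof picks any two intersection points $p_-,p_+$ of $\tgamma$ and $\alpha$, takes the shorter subarc of each, and simply \emph{asserts} that the concatenation $\beta=\alpha_-*\tgamma_-$ is non--null homotopic; the tacit justification is the CAT(0) structure on $\tX$ (two homotopic geodesic arcs with the same endpoints must coincide, which is impossible here since $\tgamma$ and $\alpha$ cross transversally at a smooth point). Your proof bypasses this uniqueness argument entirely. You exploit the hyperelliptic symmetry instead: by Corollary~\ref{coro:slit} all six Weierstrass points lie off $\alpha$, so $\tau|_\alpha$ is the fixed--point--free half rotation, and any $\tau$--pair $\{q_1,\tau(q_1)\}\subset\tgamma\cap\alpha$ is automatically antipodal on $\alpha$. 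This makes \emph{both} $\alpha$--arcs have length exactly $\sys(X)/2$, so you can afford to look at the two concatenations $a_1\cup b_1$ and $a_1\cup b_2$ simultaneously and use the elementary observation that if both were null--homotopic then $\alpha$ would be too. What your approach buys is a self--contained homotopy argument that does not appeal (even implicitly) to geodesic uniqueness; what the paper's approach buys is brevity, since once one accepts the CAT(0) step there is no need for the $\tau$--symmetry or for Corollary~\ref{coro:slit} at all.
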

\begin{proof}
Suppose not. The lift, $\tgamma$, of $\gamma$ to $X$ is a systole that does
not pass through an angle $4\pi$ cone point.
Since $\alpha$ is separating, the curve $\tgamma$ intesersects $\alpha$ at least
twice. Let $p_-$ and $p_+$ be two of the intersection points. The points
$p_+$ and $p_-$ divides $\alpha$ (resp. $\tgamma$) into a pair
of arcs. One of the arcs, $\alpha_-$ (resp. $\tgamma_-$), has length at
most $\sys(X)/2$. By concatenating $\alpha_-$ and $\tgamma_-$, we obtain a
non null homotopic closed curve $\beta$ of length at most $\sys(X)$. Since each
intersection point is not a cone point and the geodesics are distinct,
the angle at each intersection point $\tgamma_-$ is less than $\pi$. Thus, a perturbation of $\beta$ near an intersection point produces a curve homotopic to $\beta$ that has shorter length, a contradiction.
\end{proof}

\begin{lem} \label{lem:sep-no-short-prong}
If $X$ has a separating systole $\alpha$, then each prong of $X$ 
has length equal to $\sys(X)/4$.
Moreover, the angle between the projection $p(\alpha)$ 
and each prong is at least $\pi$.
\end{lem}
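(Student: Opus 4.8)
The plan is to use the slit-torus description of $X\setminus\alpha$ from Corollary~\ref{coro:slit} to pin down the two arcs into which the cone points cut $\alpha$, project this picture to $X/\langle\tau\rangle$, and feed the outcome into the length dichotomy for prongs from \S\ref{sec:indirect}; the angle statement I would then extract from the surgery principle of Remark~\ref{remk:surgery}.

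First I would analyze the geometry of $\alpha$. By Corollary~\ref{coro:slit}, $X\setminus\alpha$ is a disjoint union of two slit tori $Y_+,Y_-$ and $\tau$ preserves $\alpha$. Since $\alpha$ is simple and separating, $\partial\overline{Y_\pm}$ is a single circle, so each $\overline{Y_\pm}$ is a flat torus with a single slit $s_\pm$ removed and then completed; hence $\alpha=\partial\overline{Y_+}$ consists of two isometric copies of $s_+$ joined at the two endpoints of $s_+$. As the interiors of $Y_\pm$ are flat, every cone point of $X$ sits at a slit endpoint, and since $X$ has exactly the two cone points $c_+$ and $c_-$, these are the endpoints of $s_+$ (and of $s_-$) and lie on $\alpha$. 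Consequently $|\alpha|=2\,|s_+|$, so each of the two arcs of $\alpha$ joining $c_+$ to $c_-$ has length $\sys(X)/2$. Because $\tau$ interchanges $c_+$ and $c_-$ (Lemma~\ref{cplusminus}) and restricts to an orientation-reversing involution of $\alpha$, it cannot swap these two arcs (that would leave no fixed point on $\alpha$), so it fixes each arc and hence its midpoint; these midpoints $w_1,w_2$ are the two Weierstrass points on $\alpha$, each at distance $\sys(X)/4$ along $\alpha$ from both $c_+$ and $c_-$.

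Next I would project and count. The quotient of $\alpha$ by $\tau|_\alpha$ is an arc from $p(w_1)$ to $p(w_2)$ passing through $c^*=p(c_+)=p(c_-)$; as $\alpha$ is a systole, $p(\alpha)$ is a systolic indirect Weierstrass arc, and its two prongs are the images of the subarcs of $\alpha$ running from $c_\pm$ to $w_1$ and to $w_2$, so each has length $\sys(X)/4$. I would then invoke the dichotomy from \S\ref{sec:indirect}: either all prongs have length $\sys(X)/4$, or there is a unique shortest prong of length $\ell<\sys(X)/4$ and all others have length $\sys(X)/2-\ell>\sys(X)/4$. Since $p(\alpha)$ supplies two prongs of length exactly $\sys(X)/4$, the second alternative is impossible, so every prong has length $\sys(X)/4$, which proves the first assertion.

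For the angle claim, let $e$ be any prong, ending at the angle $\pi$ cone point $c'$, and let $\bar e$ be a prong of $p(\alpha)$ ending at $p(w_i)$; if $e\neq\bar e$, then $c'\neq p(w_i)$ by Lemma~\ref{lem:distinct-end-prongs}. By the previous step both $e$ and $\bar e$ have length $\sys(X)/4$, so if the angle between them at $c^*$ were less than $\pi$, concatenating $\bar e$ with $e$ and perturbing near $c^*$ as in Remark~\ref{remk:surgery} would produce a path from $p(w_i)$ to $c'$ of length strictly less than $\sys(X)/2$, contradicting the fact (used in the proof of Lemma~\ref{lem:distinct-end-prongs}) that the minimal arc joining two distinct angle $\pi$ cone points has length at least $\sys(X)/2$. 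Hence the angle between $e$ and each prong of $p(\alpha)$ is at least $\pi$; when $e$ is itself one of the two prongs of $p(\alpha)$, the angle in question is the interior angle of $p(\alpha)$ at $c^*$, which is at least $\pi$ since $p(\alpha)$ is a minimal arc. I expect the only real obstacle to be the first step: correctly deducing from Corollary~\ref{coro:slit} that $\alpha$ meets both cone points and that its two cone-point-to-cone-point subarcs are isometric copies of the same slit, hence of length $\sys(X)/2$; once that is in hand, both halves of the lemma follow from the bookkeeping in \S\ref{sec:indirect} and one application of Remark~\ref{remk:surgery}.
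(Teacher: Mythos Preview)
Your argument contains a genuine error in the first paragraph: the claim that $\tau$ restricts to an \emph{orientation-reversing} involution of $\alpha$ is false. Since $\tau$ is orientation-preserving on $X$ and, as you correctly deduced from the fixed-point count, preserves each component $Y_\pm$ of $X\setminus\alpha$, it preserves the two sides of $\alpha$; hence $\tau|_\alpha$ is orientation-\emph{preserving}. It is therefore the half-rotation of the circle $\alpha$ that swaps $c_+\leftrightarrow c_-$, and has \emph{no} fixed points on $\alpha$. This is consistent with Corollary~\ref{coro:slit}, which places all six Weierstrass points in the interiors of the two slit tori. Consequently $p(\alpha)$ is not an arc between angle-$\pi$ cone points but a closed loop of length $\sys(X)/2$ on $\Xtau$ passing once through $c^*$; its subarcs are not prongs, and your appeal to the dichotomy of \S\ref{sec:indirect} collapses.

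The paper's proof avoids this by staying on $X$. From Corollary~\ref{coro:slit} one still gets that $\alpha$ passes through both cone points and that the two arcs $\alpha_\pm$ joining them have length $\sys(X)/2$. If some prong had length $<\sys(X)/4$, its $p$-preimage $\gamma$ would be an arc of length $<\sys(X)/2$ joining $c_+^*$ to $c_-^*$; concatenating $\gamma$ with $\alpha_\pm$ produces a non-null-homotopic closed curve of length $<\sys(X)$, a contradiction. The angle statement follows by the same concatenation together with the perturbation of Remark~\ref{remk:surgery}. Your angle argument can be repaired along these lines, but it must use the loop $p(\alpha)$ (or equivalently the arcs $\alpha_\pm$ upstairs), not a nonexistent Weierstrass arc.
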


\begin{proof}
If not, then by the discussion at the beginning of \S \ref{sec:indirect},
there would exist a prong of length strictly less
than $\sys(X)/4$. The preimage of a prong under $p$ is an arc $\gamma$ of
length $\sys(X)/2$ that joins one angle $4 \pi$ cone point $c_-^*$ to the
other angle $4\pi$ cone point $c_+^*$.
By Corollary \ref{coro:slit}, the separating systole $\alpha$ passes through
both $c_-^*$ and $c_+^*$, and the complement $\alpha \setminus \{c_-^*, c_+^*\}$
consists of two arcs $\alpha_+$ and $\alpha_-$ each of length $\sys(X)/2$.
By concatenating $\alpha_{\pm}$ with $\gamma$ we would obtain a non-null homotopic
closed curve having length less than $\sys(X)$, a contradiction.

If the angle between the prong and $p(\alpha)$ were less than $\pi$, then
one could perturb the concatenation of $\alpha_{\pm}$ and $\gamma$ to obtain
a non-null homotopic closed curve whose length
would be less than $\sys(X)/2$, a contradiction.
\end{proof}

\begin{proof}[Proof of Theorem \ref{thm:separating-at-most-nine}]
Let $\alpha$ denote the separating systole to $X/ \langle \tau\rangle$
which is unique by Lemma \ref{lem:at-most-one-sep}. 
By Lemma \ref{lem:sep-no-short-prong}, each prong has length equal
to $\sys(X)/4$ and the angle between $p(\alpha)$ and each prong is at least $\pi$.
Thus, since the total angle at $c^*$ is $4\pi$, there are at most two prongs.
Hence there are at most two indirect systolic Weierstrass arcs.

By Theorem \ref{prop:bivalent-direct}, there are at most six direct systolic
Weierstrass arcs. Thus, by Proposition \ref{prop:nonseparating} and the
discussion at the beginning of \S \ref{sec:direct-arcs}, there are at most
eight homotopy classes of non-separating closed curves that have systolic
representatives. Since $\alpha$ is the unique separating systole, the claim
is proven.
\end{proof}

As a corollary of the proof of Theorem \ref{thm:separating-at-most-nine}
 and Lemma \ref{lem:sep-no-short-prong}, we have the following.

\begin{coro} \label{coro:separating-at-most-two-prongs}
If $X$ has a separating systole, then $X$ has either no prongs 
or exactly two prongs of the same length. 
\end{coro}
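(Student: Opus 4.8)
The plan is to read the statement off the proof of Theorem~\ref{thm:separating-at-most-nine}, with one small addition to rule out a lone prong. Either $X$ has no prongs, and there is nothing to prove, or $X$ has at least one prong, and we must show that it then has exactly two and that these have equal length.

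First I would argue that a single prong cannot occur in isolation. By definition a prong is one of the two subarcs into which the angle-$4\pi$ cone point $c^*$ divides some systolic indirect Weierstrass arc $\gamma$; the arc $\gamma$ joins two angle-$\pi$ cone points $c'$, $c''$ of $X/\langle\tau\rangle$, and its two halves join $c^*$ to $c'$ and to $c''$. These halves end at distinct angle-$\pi$ cone points, so by Lemma~\ref{lem:distinct-end-prongs} they are distinct prongs. Hence the existence of one prong already forces the existence of at least two.

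Next, since $X$ has a separating systole $\alpha$, Lemma~\ref{lem:sep-no-short-prong} applies and gives that every prong has length exactly $\sys(X)/4$ and that the angle at $c^*$ between $p(\alpha)$ and each prong is at least $\pi$. As in the proof of Theorem~\ref{thm:separating-at-most-nine}, Corollary~\ref{coro:slit} shows that $\alpha$, and hence $p(\alpha)$, passes through $c^*$, so $p(\alpha)$ contributes two rays at $c^*$; combining this with the facts that the total cone angle at $c^*$ is $4\pi$, that $\alpha$ is a geodesic through $c^*$ (so its two rays already bound two angular sectors each of size at least $\pi$), and that each adjacent pair of the equal-length prongs spans an angle at least $\pi$ (otherwise the corresponding systolic Weierstrass arc could be shortened, cf.\ the discussion preceding Proposition~\ref{prop:equal-four-prongs}), an elementary angle count around $c^*$ bounds the number of prongs by two. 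Together with the previous paragraph this yields exactly two prongs, both of length $\sys(X)/4$.

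Thus if $X$ has any prong it has precisely two, of the common length $\sys(X)/4$, and otherwise it has none. The only point needing any care is the final angle count around $c^*$, and even that is essentially the argument already run inside the proof of Theorem~\ref{thm:separating-at-most-nine}; the rest is bookkeeping on the cited results, which is why the statement is presented as a corollary of that proof.
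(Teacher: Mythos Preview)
Your proof is correct and follows the same route as the paper, which merely records the corollary as a consequence of the proof of Theorem~\ref{thm:separating-at-most-nine} together with Lemma~\ref{lem:sep-no-short-prong}; you have simply written out what the paper leaves implicit, and you have added the observation that a single prong is impossible (needed for ``exactly two'' rather than ``at most two''). One small remark: your appeal to Lemma~\ref{lem:distinct-end-prongs} in that step is unnecessary---once you know the indirect Weierstrass arc joins two \emph{distinct} angle-$\pi$ cone points, its two subarcs have different endpoints and are therefore different arcs; Lemma~\ref{lem:distinct-end-prongs} asserts the converse implication and plays no role here.
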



\section{Crossing systoles}

In this section we prove the following:

\begin{thm}
Suppose that $X/\langle \tau \rangle$ has exactly four prongs and each of these 
prongs has length equal to $\sys(X)/4$.
Then at most ten homotopy classes of closed curves are represented by systoles. 
Moreover, if $X$ has exactly ten homotopy classes of systoles, then $X$ is homothetic to
the surface described in Figure \ref{X10Figure}, and otherwise
$X$ has at most eight homotopy classes of systoles.
\end{thm}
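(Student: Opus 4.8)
The plan is to reduce the count to the direct systolic Weierstrass arcs and then analyze those via a developed fundamental domain. Since all four prongs have the common length $\sys(X)/4$, Theorem~\ref{thm:at-most-six-indirect} shows that there are exactly six systolic indirect Weierstrass arcs, and Corollary~\ref{coro:separating-at-most-two-prongs} shows that $X$ has no separating systole. Hence, by Proposition~\ref{prop:nonseparating} and the bijection between homotopy classes of nonseparating simple closed geodesics on $X$ and Weierstrass arcs on $X/\langle\tau\rangle$, the number of homotopy classes of systoles equals $6+N$, where $N$ is the number of direct systolic Weierstrass arcs. So it suffices to prove that $N\le 4$, that $N=4$ forces $X$ to be homothetic to the surface of Figure~\ref{X10Figure}, and that $N\ne 3$.

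First I would fix the geometry. Label the prongs $P_1,P_2,P_3,P_4$ in cyclic order around $c^*$; by Proposition~\ref{prop:equal-four-prongs} consecutive prongs meet at angle exactly $\pi$ at $c^*$, so that $P_1\cup P_3$ and $P_2\cup P_4$ are geodesic segments through $c^*$, and by Lemma~\ref{lem:distinct-end-prongs} the prongs terminate at four distinct angle $\pi$ cone points $c_1,\dots,c_4$; write $c_5,c_6$ for the remaining two. The union $\Sigma=P_1\cup P_2\cup P_3\cup P_4$ is a tree, and its six concatenations $P_i\cup P_j$ are precisely the systolic indirect Weierstrass arcs. Next I would establish a non-crossing principle by repeated use of the surgery argument of Remark~\ref{remk:surgery}: if two systolic Weierstrass arcs with no common endpoint met transversally, or two systolic Weierstrass arcs met at a common endpoint with angle less than $\pi$, one could produce a shorter arc between distinct angle $\pi$ cone points, contradicting the definition of $\sys(X)$. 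Thus distinct systolic Weierstrass arcs meet only at shared angle $\pi$ endpoints, except that any two of the six indirect ones also cross at $c^*$; in particular each direct systolic Weierstrass arc avoids the interiors of the prongs and the point $c^*$. Combined with Theorem~\ref{prop:bivalent-direct}, this shows that the direct systolic Weierstrass arcs form a simple, crossing-free graph of maximum degree two on $\{c_1,\dots,c_6\}$, all of whose edges lie in the flat disk obtained by cutting $X/\langle\tau\rangle$ along $\Sigma$.

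The core of the argument is then to develop a fundamental domain. I would cut $X/\langle\tau\rangle$ along $\Sigma$, and if necessary along shortest geodesic arcs joining $c_5$ and $c_6$ to $\Sigma$, to obtain a flat disk $D$ that the developing map embeds into $\Cbb$: its boundary consists of copies of the prongs (each of length $\sys(X)/4$), its interior contains $c_5$ and $c_6$, and any two copies of cone points lie at distance at least $\sys(X)/2$. A direct systolic Weierstrass arc corresponds to a straight segment of length exactly $\sys(X)/2$ inside $D$ joining two angle $\pi$ cone points. Using that the shape of $D$ near $c^*$ is determined by the prong lengths together with the angle $\pi$ condition on consecutive prongs, and using the distance lower bounds, I would enumerate the segments of this type that can occur. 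This should give $N\le 4$, and in the case $N=4$ should force $D$, hence $X$, to coincide with the configuration of Figure~\ref{X10Figure}.

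To finish, I would rule out $N=3$. A collection of three direct arcs is a crossing-free subgraph of maximum degree two on $\{c_1,\dots,c_6\}$, and at each $c_i$ with $i\le 4$ such an arc competes with the prong $P_i$ for the angle $\pi$ available at that cone point. Running through the finitely many combinatorial types of such a subgraph and imposing the metric constraints coming from the developed domain, each type should either force the presence of a fourth direct arc (so $N\ge 4$) or contradict the minimality of $\sys(X)$ (so $N\le 2$), whence $N\ne 3$. I expect the enumeration in the previous paragraph — establishing $N\le 4$ and the rigidity of the extremal case — to be the main obstacle, since it requires controlling simultaneously the angles at the angle $4\pi$ cone point and the $\sys(X)/2$ distance bounds among the copies of cone points through a long sequence of cases.
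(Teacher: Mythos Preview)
Your opening reductions are the same as the paper's, and your observation that the four prongs form a tree $\Sigma$ whose complement in the sphere is a disk is correct. But you miss the structural fact that drives the paper's proof: since each adjacent pair of prongs meets at angle exactly $\pi$ at $c^*$, the boundary of this disk $D$ is a \emph{smooth} closed geodesic $\beta$ of length $2\,\sys(X)$, with the points $c_1,c_2,c_3,c_4$ evenly spaced along it. The two remaining cone points $c_5,c_6$ lie in the interior of $D$, and the paper cuts along the direct segment $\gamma$ joining them (not from each to $\Sigma$, as you propose). The result is a flat annulus with geodesic boundary and trivial holonomy, i.e.\ a Euclidean cylinder $[0,h]\times(\Rbb/2\,\sys(X)\,\Zbb)$, with $c_1,\dots,c_4$ evenly spaced on one boundary circle and $c_5,c_6$ diametrically opposite on the other.

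Once you have the cylinder, everything is immediate and no case enumeration is needed. The length of $\gamma$ is forced to be $\sys(X)$, so the arc $c_5c_6$ is not systolic. The distance constraint $d(c_5,c_i)\ge \sys(X)/2$ forces $h\ge(\sqrt{3}/4)\,\sys(X)$, which in turn makes every direct arc between two points of $\{c_1,\dots,c_4\}$ too long; hence every direct systolic Weierstrass arc joins $\{c_5,c_6\}$ to $\{c_1,\dots,c_4\}$. In the cylinder each of $c_5,c_6$ has at most two nearest points among the evenly spaced $c_1,\dots,c_4$, so $N\le4$; equality forces $c_5$ (and $c_6$) to sit midway between two of the $c_i^*$, pinning $h=(\sqrt{3}/4)\,\sys(X)$ and identifying the surface of Figure~\ref{X10Figure}. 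Finally, the translation of the cylinder by $\sys(X)$ in the circle direction swaps $c_5\leftrightarrow c_6$ and permutes $\{c_1,\dots,c_4\}$, so the number of direct systolic arcs from $c_5$ equals the number from $c_6$; this gives $N\in\{0,2,4\}$ without any combinatorial case split.

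Your plan to cut from $c_5$ and $c_6$ out to $\Sigma$ produces a simply connected domain with extra corners and destroys this symmetry, which is why you anticipate ``a long sequence of cases'' both for $N\le4$ and for $N\ne3$. That program might be completable, but as written it is not a proof, and the cylinder picture makes all of it unnecessary.
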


\begin{proof}
By Corollary \ref{coro:separating-at-most-two-prongs}, the surface $X$ has no separating systole. 
By Theorem \ref{thm:at-most-six-indirect}, there are exactly six indirect
systolic Weierstrass arcs. Thus, by Proposition \ref{prop:nonseparating} and the
discussion at the beginning of \S \ref{sec:direct-arcs}, to prove the first claim
it suffices to show that there are at most four direct systolic Weierstrass arcs.
 
By Proposition \ref{prop:equal-four-prongs}, the angle between adjacent
prongs equals $\pi$. Thus, by cutting along the four prongs we obtain a
topological disc $D$ bounded by a geodesic $\beta$ with no corners. The
geodesic $\beta$ has length $8 \cdot (\sys(X)/4) = 2 \cdot \sys(X)$
and contains one point corresponding to each of the angle $\pi$
cone points that are endpoints of the four prongs.
Label those cone points in cyclic order $c_1, c_2, c_3$, and $c_4$.
For each $i$, there is a unique point $c_i^*$ on $\beta$ lying between $c_i$ and $c_{i+1}$
that corresponds to $c^*$.
The distances satisfy $\dist(c_i,c_i^*)= \sys(X)/4 = \dist(c_i^*, c_{i+1})$.
The interior angle at each $c_i$, $c_i^*$ is $\pi$. 

The two remaining angle $\pi$ cone points, $c_5$ and $c_6$, lie in the interior
of the disc $D$. Because $\beta$ is a geodesic (without corners), the disk
is geodesically convex, and there exists a direct Weierstrass arc $\gamma$
joining $c_5$ and $c_6$. By cutting along $\gamma$ we obtain a topological
annulus $A$ with geodesic boundary components $\beta$ and $\beta'$.
Since $X$ is a translation surface, $A$ is a Euclidean cylinder isometric
to $[0, h] \times (\Rbb/\ell \cdot \Zbb)$ where $\ell= 2 \cdot \sys(X)$ is the
common length of $\beta$ and $\beta'$.

The length of $\gamma$ equals $(1/2)\cdot \ell$, and hence $\gamma$ is not systolic.
The distance between $c_5$ (resp. $c_6$) and $\{c_1, c_2,c_3, c_4\}$ is at
least $\sys(X)/2$. It follows that the height $h$ of the cylinder $A$ is at
least $(\sqrt{3}/4) \cdot \sys(X)$. As a consequence, there does not exist
a direct systolic Weierstrass arc joining two distinct points in
$\{c_1, c_2,c_3, c_4\}$.

In sum, if $\delta$ is a direct systolic Weierstrass arc, then $\delta$
joins a point in $\{c_5,c_6\}$ to a point in $\{c_1, c_2,c_3, c_4\}$.
Since $A$ is a Euclidean annulus, there are at most two direct systolic
Weierstrass arcs joining $c_5$ (resp. $c_6$) to $\{c_1, c_2,c_3, c_4\}$,
and hence at most ten systolic Weierstrass arcs in total.
 
Moreover, since the points $\{c_1, c_2,c_3, c_4\}$ are evenly spaced
around $\beta$, and the points $\{c_5,c_6\}$ are evenly spaced about $\beta'$,
there are exactly four systolic arcs only if the respective
shortest segments, $\sigma_5$ and $\sigma_6$,
joining $c_5$ and $c_6$ to $\beta$ bisect arcs joining successive
points in $\{c_1, c_2,c_3, c_4\}$, that is, only if $\sigma_5$
and $\sigma_6$ have endpoints in $\{c_1^*, c_2^*,c_3^*, c_4^*\}$.
In this case, $h = (\sqrt{3}/4) \cdot \sys(X)$. It follows that
$X$ is homothetic to the surface described in Figure \ref{X10Figure}.

Finally, if there is only one direct systolic Weierstrass arc joining $c_5$ (resp. $c_6$)
to $\{c_1, c_2,c_3, c_4\}$, then there is only one direct systolic Weierstrass arc
joining $c_6$ (resp. $c_5$). Hence, if $X$ is not homothetic to the surface
described in Figure \ref{X10Figure}, then $X$ has at most eight homotopy
classes of simple closed curves with systolic representatives.
\end{proof}


\section{One short prong}

In this section we prove the following:

\begin{thm}
If $X/\langle \tau\rangle$ has one short prong, then $X$ has at most nine homotopy classes
of closed curves that are represented by systoles.
\end{thm}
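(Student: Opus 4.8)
The plan is to follow the pattern of \S\ref{sec:direct-arcs}--\S8: rule out separating systoles, reduce to counting direct and indirect systolic Weierstrass arcs, and then dispose of the single new configuration that a short prong permits. Since $X/\langle\tau\rangle$ has a short prong the prongs are not all of the same length, so by Corollary~\ref{coro:separating-at-most-two-prongs} the surface $X$ has no separating systole; all systoles are thus nonseparating, and by Proposition~\ref{prop:nonseparating} together with the discussion opening \S\ref{sec:direct-arcs} the homotopy classes of systoles correspond bijectively to the systolic Weierstrass arcs, each of which is direct or indirect. By Theorem~\ref{prop:bivalent-direct} there are at most six direct ones, so it suffices to bound the indirect ones together with the direct ones by nine.

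Let $e_{1}$ be the unique short prong (length $\ell<\sys(X)/4$), with endpoint the angle $\pi$ cone point $c_{1}$; by \S\ref{sec:indirect} every other prong has length $L:=\sys(X)/2-\ell>\sys(X)/4$. Since two long prongs concatenate to length $2L>\sys(X)/2$, every indirect systolic Weierstrass arc has the form $e_{1}\cup e_{i}$ with $e_{i}$ long; as distinct long prongs end at distinct cone points (Lemma~\ref{lem:distinct-end-prongs}) and there are at most five prongs in all (Proposition~\ref{prop:at-most-five-prongs}), there are at most four indirect systolic Weierstrass arcs. If there are at most three, we are already done; so the whole theorem reduces to the case of exactly five prongs $e_{1},\dots,e_{5}$ (with $e_{2},\dots,e_{5}$ long, ending at distinct $c_{2},\dots,c_{5}$, and $c_{6}$ the last Weierstrass point) in which all four arcs $e_{1}\cup e_{i}$ are systolic. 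In this case I must show that at most five direct systolic Weierstrass arcs occur. The direct systolic Weierstrass arcs form a simple graph on $\{c_{1},\dots,c_{6}\}$ (Lemma~\ref{lem:at_most_one_direst_arc}) with all degrees at most two (Theorem~\ref{prop:bivalent-direct}); if in addition $c_{1}$ has degree at most one, then the graph has at most $\lfloor(1+2\cdot5)/2\rfloor=5$ edges, which is exactly what is needed.

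To see that $c_{1}$ has degree at most one, suppose two direct systolic Weierstrass arcs $\delta,\delta'$ end at $c_{1}$, with other endpoints $c_{a}\neq c_{a'}$. At most one of $a,a'$ equals $6$, so we may take $a\in\{2,\dots,5\}$, and then $e_{1}\cup e_{a}$ is a systolic Weierstrass arc joining the same pair $c_{1},c_{a}$ as $\delta$. Lifting to $X$, the arc $\delta$ lifts to a closed systole $\widetilde\delta$ through the Weierstrass points $W_{1},W_{a}$ over $c_{1},c_{a}$ that avoids both angle $4\pi$ cone points, whereas $e_{1}\cup e_{a}$ lifts to a closed systole through $W_{1}$, $W_{a}$ and both cone points; the two are distinct and both pass through $W_{1}$ and $W_{a}$, which are smooth points of $d_{\omega}$, so they cross transversally at each of $W_{1},W_{a}$. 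Resolving this pair of crossings as in Remark~\ref{remk:surgery}---cutting both systoles at $W_{1}$ and $W_{a}$, recombining complementary arcs to get a closed curve of length at most $\sys(X)$, and perturbing at a crossing angle less than $\pi$---produces a closed curve shorter than $\sys(X)$, which must be shown to be non-contractible; that would contradict the definition of $\sys(X)$ and finish the proof.

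The main obstacle I expect is precisely this last point: guaranteeing that some recombination of $\widetilde\delta$ and the lift of $e_{1}\cup e_{a}$ is non-contractible (equivalently, excluding that these two systoles are homologous in such a way that every recombination is contractible or separating). If the bare surgery does not settle this, I would instead use the developing-map method of \S\ref{sec:direct-arcs} and \S8: cut $X/\langle\tau\rangle$ along the five prongs and along $\delta$ (and $\delta'$ if needed), observe that the requirement that all four arcs $e_{1}\cup e_{i}$ be geodesics pins down the five cone-angle pieces at $c^{*}$ so that the two flanking $e_{1}$ are each at least $\pi$, develop the resulting piecewise-geodesic disc into the plane, and derive a contradiction from the facts that any two distinct Weierstrass points lie at distance at least $\sys(X)/2$ and that a Weierstrass point and the image of $c^{*}$ lie at distance at least $\ell$. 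A secondary bookkeeping nuisance is the boundary case $a'=6$, where the second direct arc at $c_{1}$ reaches the prong-free cone point; this is covered by the argument above but has to be tracked carefully when the degree count is run over all six cone points.
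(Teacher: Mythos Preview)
Your overall reduction matches the paper's: both rule out separating systoles via Corollary~\ref{coro:separating-at-most-two-prongs}, reduce to the critical case of five prongs with four indirect systolic arcs, and aim to show that $c_{1}$ is the endpoint of at most one direct systolic Weierstrass arc (hence at most five direct arcs total, giving $4+5=9$).

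Where you diverge is in how you rule out a direct systolic Weierstrass arc from $c_{1}$ to $c_{a}$ for $a\in\{2,3,4,5\}$. The paper cuts $X/\langle\tau\rangle$ along the five prongs to obtain a topological disc $D$ with $c_{6}$ in its interior, and then eliminates each candidate arc $c_{1}c_{2},\,c_{1}c_{3},\,c_{1}c_{4},\,c_{1}c_{5}$ by explicit Euclidean estimates in $D$ (using the side lengths $2\ell,2L$ and the fact that the interior angles at the vertices $c_{i}^{*}$ are at least $\pi$); this is essentially your fallback plan. Your primary approach is more conceptual: if such a direct arc $\delta$ existed, then $\delta$ and the indirect arc $e_{1}\cup e_{a}$ would be two distinct systolic Weierstrass arcs with the same endpoints $c_{1},c_{a}$, and you perform surgery on their lifts at the smooth Weierstrass point $W_{1}$.

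The gap you flag---non-contractibility of the recombined curve---is real but fillable, and in fact every recombination is essential. The one-to-one correspondence stated at the start of \S\ref{sec:direct-arcs} (resting on Proposition~\ref{prop:nonseparating}) says that distinct Weierstrass arcs lift to non-homotopic closed geodesics, so $\widetilde\delta\not\simeq\gamma$ where $\gamma$ is the lift of $e_{1}\cup e_{a}$. Since $\tau$ swaps the two $W_{1}$--$W_{a}$ halves of each curve, if one half of $\widetilde\delta$ were homotopic rel endpoints to one half of $\gamma$, applying $\tau$ would match the remaining halves as well and force $\widetilde\delta\simeq\gamma^{\pm1}$, which is excluded. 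Hence every concatenation of halves is essential; since $W_{1}$ is smooth and the two geodesics cross there at an angle in $(0,\pi)$, each concatenation can be strictly shortened. Your surgery therefore goes through once this observation is supplied and yields a shorter, more uniform argument than the paper's case-by-case plane geometry; the paper's route has the compensating virtue of staying entirely within the quotient and never revisiting the double cover.
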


\begin{proof}
By Corollary \ref{coro:separating-at-most-two-prongs}, the surface $X$ has no separating systole.
By Proposition \ref{prop:at-most-five-prongs}, there are at most five prongs,
and so by assumption there is one prong of length $\ell < \sys(X)/4$
and four prongs of length $L=\sys(X)/2-\ell$. Thus, there are at most four
indirect systolic Weierstrass arcs. Thus, it suffices to show that
$X$ has at most five direct systolic Weierstrass arcs.

By cutting $X/\langle \tau\rangle$ along the five prongs, we obtain a topological
disc $D$ with one angle $\pi$ cone point in the interior. The boundary consists of
five geodesic arcs each of whose endpoints---vertices---corresponds to the angle $4\pi$ cone point.
The midpoint of each arc corresponds to an angle $\pi$ cone point on $X/\langle \tau\rangle$.
Choose an orientation of the boundary, and let $c_1^*$ and $c_2^*$ denote the endpoints of
the oriented arc that corresponding to the short prong.
Label the other vertices $c_3^*$, $c_4^*$, and $c_5^*$ according to the orientation.
Denote by $c_i$ the midpoint of the arc with endpoints $c_i^*$ and $c_{i+1}^*$.
There remains one angle $\pi$ cone point, $c_6$, that belongs to the interior of $D$.

By Theorem \ref{prop:bivalent-direct}, for each angle $\pi$ cone point $c_i$,
there are at most two direct systolic Weierstrass arcs ending at $c_i$.
Thus, to prove the claim, it suffices to show that $c_1$ is the endpoint
of at most one direct systolic Weierstrass arc. We will show
that if $c_1$ is the endpoint of a direct systolic Weierstrass arc,
then the other endpoint must be $c_6$.
 
Since systolic Weirstrass arcs can not intersect except at a cone point,
a direct Weierstrass arc joining $c_1$ to another angle $\pi$ cone point
can not pass through the boundary of $D$. In particular, if $\alpha$ is a
direct Weierstrass arc joining $c_1$ to either $c_2$, $c_3$, $c_4$, or $c_5$,
then the complement of $\alpha$ consists of two disks, one that contains
$c_6$ and one that does not.

Suppose that $\alpha$ is a direct geodesic segment that joins $c_1$ and $c_2$.
Consider the component, $D'$, of $D \setminus \alpha$, containing $c_1^*$.
If $D'$ does not contains $c_6$, then $D'$ is a flat surface bounded by
three geodesic segments. Since the angle at $c_1^*$ is at least $\pi$,
the Gauss-Bonnet formula implies that the angles at $c_1$ and $c_2$
are both zero, and hence $\alpha$ is not direct. 

If $D'$ contains $c_6$, then by cutting $D'$ along the geodesic segment
joining $c_6$ and $c_1^*$
we obtain a quadrilateral $Q$ with a side corresponding to $\alpha$.
The endpoints of $\alpha$ correspond to $c_1$ and $c_2$. Let $x_-$ and $x_+$
denote the vertices of $Q$ distinguished by $|x_-c_1|=\ell$ and $|x_+c_2|=L$.
If $\alpha$ is systolic, then, by the triangle inequality,
$|c_1x_+| \leq L + \sys(X)/2$ with equality if and only if $c_1$, $c_2$ and $x_+$ are colinear.
The midpoint of $\overline{x_- x_+}$ is $c_6$, and thus
by the triangle inequality
\[ |c_1 c_6|~ \leq~ \frac{|c_1x_-|}{2}~ +~ \frac{|c_1x_+|}{2}~ \leq~ \ell+ L~
 =~ \frac{\sys(X)}{2}
\]
with equality $c_1$, $c_6$, and $c_2$ are colinear.
Thus, either $|c_1c_6|< \sys(X)/2$ or $|c_2c_6|< \sys(X)/2$, a contradiction.
Therefore, there is no direct systolic Weierstrass arc joining $c_1$ and $c_2$.
Similarly, there is no direct systolic Weierstrass arc joining $c_1$ and $c_5$.

Suppose that $\alpha$ is a direct geodesic segment that joins $c_1$ to $c_3$.
Let $D'$ denote the component of $X \setminus \alpha$ that contains $c_2$.
If $D'$ does not contain $c_6$, then $D'$ is a quadrilateral with vertices
$c_1$, $c_1^*$, $c_2^*$, and $c_3$. Since $|c_2c_2^*|=L=|c_2^*c_3|$,
the angle $\angle c_3 c_2c_2^*$ is less than $\pi/2$, and thus
$\angle c_1^*c_2c_3 > \pi/2$. Therefore $|c_1^*c_3|> |c_2c_3| \geq \sys(X)/2$. 
Because $|c_1^*c_2^*|=2L$ and $|c_2^*c_3|=L$, the angle $\langle c_2 c_1^* c_3$
is acute. Thus, since the interior angle at $c_1^*$ is at least $\pi$,
the angle $\angle c_1 c_1^* c_3$ greater than $\pi$. In particular,
 $|c_1c_3|> |c_1^*c_3|$, and so, in sum, the length of $\alpha$
is greater than $\sys(X)/2$.

If $D'$ contains $c_6$, then the other component of $D \setminus \alpha$,
is a pentagon with vertices $c_1$, $c_3$, $c_3^*$, $c_4^*$, and $c_5^*$.
Using the triangle inequality, we have
\[ L+ |c_3 c_5^*|\, \geq\, |c_3c_3^*|+ |c_3^*c_5^*|\, \geq\,
 |c_3^*c_5^*|\, =\, 2 |c_4c_5|\, \geq\, \sys(X)\, =\, 2 \ell + 2 L,
\]
and therefore $|c_3c_5^*| \geq 2 \ell+L > \ell+L = \sys(X)/2$.

Since $|c_3^*c_5^*| \geq \sys(X) > 2L = |c_3^*c_4^*|= |c_4^*c_3^*|$,
the angle $\angle c_3^*c_5^*c_3$ is less than $\pi/3$.
Because $|c_3^*c_5^*| > 2L = |c_3^*c_3|$, we have
$\angle c_3^*c_5^* c_3 < \pi/6$. Thus, since the interior angle
at $c_5^*$ is at least $\pi$, the angle $\angle c_1c_5^*c_3$ is
greater than $\pi/2$. Therefore, $|c_1c_3| > |c_3c_5^*|$.
In sum, $|c_1c_3| > \sys(X)/2$, and hence $\alpha$ is not systolic.
Therefore, there is no direct systolic Weierstrass arc joining $c_1$
to $c_3$. A similar argument shows that there is no direct
systolic Weierstrass arc joining $c_1$
to $c_4$. 
\end{proof}


{\it Addresses:}\\
Department of Mathematics, Indiana University, Bloomington, IN, USA\\
Mathematics Research Unit, University of Luxembourg, Esch-sur-Alzette, Luxembourg

{\it Emails:}\\
cjudge2@gmail.com\\
hugo.parlier@uni.lu


\begin{thebibliography}{99}

\bibitem[Boissy-Geninska]{Boissy-Geninska}
Corentin Boissy and  Slavyana Geninska,
{\em Systoles in translation surfaces},
preprint. \url{https://arxiv.org/abs/1707.05060}.


\bibitem[Broughton-Judge]{Broughton-Judge} S. Allen Broughton and Chris Judge,
{\em Ellipses in translation surfaces.}
Geom. Dedicata {\bf 157} (2012), 111-151.

\bibitem[Bridson-Haefliger]{BridsonHaefliger} Martin Bridson and Andr\'e Haefliger,
{\em Metric spaces of non-positive curvature},
Grundlehren der Mathematischen Wissenschaften {\bf 319}. Springer-Verlag, Berlin, 1999.


\bibitem[Fejes T\'oth]{Toth}
L\'aszl\'o Fejes T\'oth, {\em Regular figures.}
A Pergamon Press Book. The Macmillan Co., New York (1964).

\bibitem[Forni-Matheus]{Forni-Matheus-survey} 
Giovanni Forni and Carlos Matheus,
{\em Introduction to Teichmüller theory and its applications to dynamics of interval exchange transformations, flows on surfaces and billiards}. 
J. Mod. Dyn. {\bf 8} (2014), no. 3-4, 271-436. 

\bibitem[Gutkin-Judge]{Gutkin-Judge}
Eugene Gutkin and Chris Judge,
{\em Affine mappings of translation surfaces: geometry and arithmetic.} Duke Math. J. {\bf 103} (2000), 
no. 2, 191-213.

\bibitem[Jenni]{Jenni} Felix Jenni,  {\em \"Uber den ersten Eigenwert des Laplace-Operators auf ausgew\"ahlten Beispielen kompakter Riemannscher Fl\"achen.} Comment. Math. Helv. {\bf 59} (1984), no. 2, 193-203. 

\bibitem[Katz-Sabourau]{Katz-Sabourau} Mikhail G.Katz and St\'ephane 
Sabourau, 
{\em An optimal systolic inequality for CAT(0) metrics in genus two.} Pacific J. Math. {\bf 227} (2006), no. 1, 95-107. 

\bibitem[Malestein-Rivin-Theran]{Malestein-Rivin-Theran}
Justin Malestein,  Igor Rivin, and Louis Theran, {\em Topological designs.}
Geom. Dedicata {\bf 168} (2014), 221-233. 

\bibitem[Masur-Smillie]{Masur-Smillie}
Howard Masur and John Smillie,
{\em Hausdorff dimension of sets of nonergodic measured foliations.}
Ann. of Math. (2) {\bf 134} (1991), no. 3, 455-543.

\bibitem[McMullen]{McMullen} Curtis T. McMullen, 
{\em Dynamics of $SL_2(R)$ over moduli space in genus two.}
Annals of Mathematics, {\bf 165} (2007), no. 2, 397-456.

\bibitem[Pu]{Pu} P. M. Pu, 
{\em Some inequalities in certain nonorientable Riemannian manifolds.}
Pacific J. Math. {\bf 2}, (1952). 55-71.

\bibitem[Sabourau]{Sabourau}
 St\'ephane Sabourau, {\em Systoles des surfaces plates singuli\`{e}res de genre deux.}
Math. Z. {\bf 247} (2004), no. 4, 693-709. 

\bibitem[Smillie-Weiss]{Smillie-Weiss} John Smillie and Barak Weiss. {\em
Characterizations of lattice surfaces.} Invent. Math. {\bf 180} (2010), no. 3, 535-557.

\bibitem[Thurston]{Thr98} William P. Thurston,
{\em Shapes of polyhedra and triangulations of the sphere.}
The Epstein birthday schrift, pp 511-549,
Geom. Topol. Monogr. {\bf 1}, Geom. Topol. Publ., Coventry, 1998.


\bibitem[Wright]{Wright-survey} Alex Wright, 
{\em From rational billiards to dynamics on moduli spaces.} 
Bull. Amer. Math. Soc. (N.S.) {\bf 53} (2016), no. 1, 41-56.

\end{thebibliography}
\end{document}